\theoremstyle{plain}
\newtheorem{theorem}{Theorem}
\newtheorem{lemma}{Lemma}
\newtheorem{proposition}{Proposition}
\newtheorem{corollary}{Corollary}
\theoremstyle{definition}
\newtheorem{definition}{Definition}
\newtheorem{fact}{Fact}
\newcommand{\Pb}{\mathbb{P}}
\newcommand{\Nb}{\mathbb{N}}
\newcommand{\R}{\mathbb{R}}
\newcommand{\E}{\mathbb{E}}
\newcommand{\X}{\mathcal{X}}
\newcommand{\F}{\mathcal{F}}
\newcommand{\lp}{\left(}
\newcommand{\rp}{\right)}
\newcommand{\lc}{\left\{}
\newcommand{\rc}{\right\}}
\newcommand{\lb}{\left[}
\newcommand{\rb}{\right]}
\newcommand{\rba}{\right|}
\newcommand{\lba}{\left|}
\newcommand{\rdba}{\right\|}
\newcommand{\ldba}{\left\|}
\newcommand{\ra}{\right\rangle}
\newcommand{\la}{\left\langle}
\newcommand{\pe}{\psi_E}
\newcommand{\pel}{\psi_E(\lambda)}
\newcommand{\ind}{\mathds{1}} %
\begin{document}

\begin{frontmatter}
\title{Empirical Bernstein in smooth Banach spaces}
\runtitle{Empirical Bernstein in smooth Banach spaces}

\begin{aug}
\author[A]{\fnms{Diego}~\snm{Martinez-Taboada}\ead[label=e1]{diegomar@andrew.cmu.edu}}
\and
\author[A]{\fnms{Aaditya}~\snm{Ramdas}\ead[label=e2]{aramdas@andrew.cmu.edu}}

\address[A]{Department of Statistics \& Data Science,
Carnegie Mellon University \printead[presep={ ,\ \\ }]{e1,e2}}

\end{aug}

\begin{abstract}
  Existing concentration bounds for bounded vector-valued random variables include extensions of the scalar Hoeffding and Bernstein inequalities. While the latter is typically tighter, it requires knowing a bound on the variance of the random variables. We derive a new vector-valued empirical Bernstein inequality, which makes use of an empirical estimator of the variance instead of the true variance. The bound holds in 2-smooth separable Banach spaces, which include finite dimensional Euclidean spaces and separable Hilbert spaces. The resulting confidence sets are instantiated for both the batch setting (where the sample size is fixed) and the sequential setting (where the sample size is a stopping time). The confidence set width asymptotically exactly matches that achieved by Bernstein in the leading term. 
\end{abstract}

\begin{keyword}[class=MSC]
\kwd[Primary ]{60E15}
\kwd{60G42}
\kwd[; secondary ]{60B11}
\end{keyword}

\begin{keyword}
\kwd{empirical Bernstein}
\kwd{concentration inequalities}
\kwd{2-smooth Banach spaces}
\end{keyword}

\end{frontmatter}


\section{Introduction}

The concentration phenomenon of averages of random variables composes one of the cornerstones of statistics. In the case of bounded (scalar) random variables, \cite{hoeffding1963probability} presented a celebrated concentration inequality based on upper bounding the moment generating function. This is commonly referred to as Hoeffding's inequality, and it solely relies on the bounds on the random variables. This bound may be sharpened if one knows the variance of these random variables, giving way to the so-called Bernstein and Bennett inequalities \cite{bernstein_theory_1927, bennett1962probability}. While these inequalities can be inverted to yield tighter confidence intervals for the mean, they are not actionable in practice as they require knowledge of (a nontrivial upper bound on) the variance, which is generally unknown. This motivated the development of so-called empirical Bernstein inequalities \cite{audibert2007tuning, maurer2009empirical, waudby2024estimating}, which make use of an empirical estimator of the variance instead of the true variance. These are typically tighter than Hoeffding's because they can adapt to low variance data, and are applicable because they require no further information than the bounds on the random variables.

This paper will extend such techniques to random variables in more than one dimension. 
In a seminal contribution, \cite{pinelis1994optimum} extended Hoeffding's, Bernstein's and Bennett's inequalities to smooth Banach spaces. However, presenting a dimension-free, multivariate version of the empirical Bernstein inequality remains an open problem. 

The practical importance of providing a concentration inequality for bounded multivariate objects that adapts to their unknown variance should be readily apparent. Similarly to the scalar case, a multivariate empirical Bernstein inequality would in general provide tighter confidence sets than those given by Hoeffding's inequality. A sensible way of judging the tightness of an empirical Bernstein inequality is to check whether the dominant term of its width converges to that of the (oracle) Bernstein inequality that knows the variance. We develop such tight empirical Bernstein inequalities for smooth Banach spaces in this work.


\subsection{Our contributions}

The primary contribution of our work is to provide a tight multivariate version of the empirical Bernstein inequality (Section~\ref{section:main_result}). Seeking  generality of the results, and similarly to \cite{pinelis1994optimum}, our result holds in 2-smooth separable Banach spaces. These include any separable Hilbert space, such as the Euclidean spaces $\R^d$ with the usual inner product. The concentration inequality is dimension-free and its first order term is shown to match that of the oracle Bernstein inequality in Banach spaces, including constants (Proposition~\ref{proposition:radius_ball_iid}).

Our multivariate empirical Bernstein inequality is obtained by coupling a new supermartingale construction (Theorem~\ref{theorem:main_theorem}) with Ville's inequality.
In fact, the results are obtained as a byproduct of a sequential empirical Bernstein inequality (Corollary~\ref{cor:empirical_bernstein}), where the sample size is not fixed beforehand and observations are obtained one at a time. Consequently, our empirical Bernstein bound may be instantiated for the classical batch setting (Section~\ref{section:fixed_sample_size}), where the sample size is fixed beforehand, but also the sequential setting (Section~\ref{section:sequential}), where the sample size is a stopping time and is not fixed beforehand. 


Our arguments are inspired by the approach in \cite{pinelis1992approach, pinelis1994optimum}, but substantial differences may be noted between the two contributions. The heart of the (oracle) Pinelis' Bernstein-type inequality is to use a Taylor expansion argument, make the first derivative term disappear thanks to the martingale condition, and upper bound the second term using an immediate inequality; the result then follows with relative ease. However, our first derivative does not have zero expectation given that the empirical variance estimator introduces bias (Equation~\eqref{eq:second_term}), and the bound for the second derivative is inflated by the emergence of additional terms (Equation~\eqref{eq:second_derivative_dct}). Two strong inequalities are thus needed to combine all the pieces of the Taylor expansion (Lemma~\ref{lemma:main_ineq} and Lemma~\ref{lemma:ineq_cosh_sinh}). Importantly, the most intricate part of our proof lies not merely in establishing these inequalities, but in \textit{recognizing} that they hold and that their validity is precisely what allows the argument to go through —particularly in the case of the sharp scalar inequality of Lemma~\ref{lemma:main_ineq}.  From a higher level, the complexity of our new supermartingale constructions requires us to work with higher order terms whose effects eventually cancel out. The tighter analysis that stems from examining these higher order terms is key in establishing the result (compare the tightness of Lemma~\ref{lemma:main_ineq} with the inequality from \cite[Proposition 4.1]{fan_exponential_2015}). On top of that, a number of technical challenges arise (see Appendix~\ref{section:aux_lemmas} for the new auxiliary results that ought to be derived). We expand on these details in Section~\ref{section:main_proof}. We believe that our techniques and overall approach may serve as a foundation for new concentration inequalities, as we discuss in Section~\ref{section:summary}.

\subsection{Related Work}

Our work builds on the techniques exhibited in  \cite{pinelis1994optimum}, who presented Bernstein, Bennett, and Hoeffding type inequalities for 2-smooth separable Banach space-valued random variables. \cite{pinelis1994optimum} developed the theoretical tools from \cite{pinelis1992approach} in greater generality. A supermartingale construction underlies their concentration inequalities as well, and their results are also dimension-free. Our contribution can be seen as extending Pinelis' works to the case where the variance is unknown, but we would like to attain exactly the same limiting (leading order) bounds.

\subsubsection*{Bounded vector-valued concentration inequalities} There are other works that have studied concentration inequalities for bounded random variables in multivariate spaces. 
\cite{kallenberg1991some} gives weaker results than \cite{pinelis1994optimum}, and their method seems to be confined solely to Hilbert spaces. \cite{kohler2017sub}, Lemma 18, and \cite{gross2011recovering}, Theorem 12, also provide dimension-free vector-valued Bernstein inequalities. These are again weaker than those derived in \cite{pinelis1994optimum} and restricted to independent random variables (see Appendix~\ref{appendix:optimality_pinelis} for a comparison of the contributions). \cite{whitehouse2023time}, Theorem 6.1, presented a self-normalized, multivariate empirical Bernstein inequality; their bound relies on a covering argument, thus being dimension-dependent and limited to finite-dimensional Euclidean spaces. Similarly, \cite{chugg2023time} provided a time-uniform vector-valued empirical Bernstein inequality using a PAC-Bayes argument, with the radii of their confidence sequences depending again on the dimension of the Euclidean space and not being generalizable to infinite-dimensional spaces.


\subsubsection*{Scalar empirical Bernstein inequalities} The first scalar empirical Bernstein inequalities were presented in \cite{audibert2007tuning}, Theorem 1, \cite{maurer2009empirical}, Theorem 11, and \cite{balsubramani2016sequential}, Theorem 5.  A different concentration inequality was proposed in \cite{mhammedi2019pac}, Lemma 13, in the context of PAC-Bayesian generalization bounds. Nonetheless, their empirical performance was improved in \cite{waudby2024estimating}, Theorem 2, which extended \cite{howard2021time}, Theorem 4. In fact, the latter is the only scalar empirical Bernstein we are aware of whose first order term exactly matches that of Bernstein in the limit (as will be made precise later in this paper). The confidence sets presented in this contribution can be seen as generalizing the \cite{waudby2024estimating} (scalar) empirical Bernstein bounds to smooth Banach spaces.

\subsubsection*{Time-uniform Chernoff bound} Deriving concentration inequalities through nonnegative supermartingale constructions has recently received widespread attention as they provide, in view of Ville's inequality, probabilistic guarantees for streams of data that are continuously monitored and adaptively stopped. Our work unifies that of \cite{pinelis1994optimum} and~\cite{waudby2024estimating}; all three can be viewed as instances of the time-uniform Chernoff bound framework of \cite{howard2020time, howard2021time}. 

\subsubsection*{Gambling-based concentration} There exists a deep connection between concentration inequalities and the regret guarantee of online learning algorithms with linear losses, as elucidated in \cite{rakhlin2017equivalence}. Thus, studying concentration phenomena via \textit{gambling} games composes a growing and exciting line of research \cite{jun2019parameter, shekhar2023near, waudby2024estimating}. However, the concentration bounds may not have a closed analytical form. For example, a betting-based strategy to construct a scalar empirical Bernstein concentration inequality was presented in \cite{orabona2023tight}, Theorem 3; the confidence sets obtained from inverting such an inequality are analytically intractable, and their computation is nontrivial. While prominent online learning algorithms have been designed in Banach spaces \cite{cutkosky2018black}, the non-trivial inversion needed to obtain confidence sets is especially challenging in multivariate settings (resulting in most of the development having been tailored to scalar-valued processes). \cite{cutkosky2019combining}, Appendix A, presented an empirical Bernstein-type concentration inequality in Hilbert spaces; however, the radii of the confidence sets are established up to polylogarithmic factors. \cite{ryu2024gambling} proposed multivariate gambling-based confidence sequences  but their work is limited to finite dimensional Euclidean spaces $\R^d$ and are not closed form, making it hard to judge their limiting width, and have a computational complexity of $O(t^d)$ for verifying whether a vector belongs to the confidence sequence at time $t$. %

\subsection{Paper outline}
We organize this manuscript as follows. We present preliminary work in Section~\ref{section:background}, putting the focus on Ville's inequality and 2-smooth separable Banach spaces. The key ideas that underpin the scalar Bernstein, scalar empirical Bernstein and multivariate Bernstein inequalities are then exhibited in Section~\ref{section:scalar_ideas}. Section~\ref{section:main_result} is dedicated to the statement and implications of the main theorem of the paper, a (empirical Bernstein-type) supermartingale construction for 2-smooth separable Banach space-valued data. 
We instantiate such a result to obtain empirical Bernstein confidence intervals for the batch setting, and confidence sequences for the sequential setting. Furthermore, we compare the empirical performance of the proposed confidence sets against those obtained via Hoeffding's and Bernstein's inequalities. In Section~\ref{section:main_proof}, the proof of the main theorem of the paper is presented. We conclude with some remarks in Section~\ref{section:summary}.

\section{Preliminaries} \label{section:background}

This section contains the technical definitions that are necessary to understand our method and proof technique.

\subsection{Nonnegative supermartingales and Ville's inequality}

Let us start by presenting some essential tools. A  filtration $\F = (\F_t)_{t \geq 0}$ is a sequence of $\sigma$-algebras such that $\F_t \subseteq \F_{t+1}$, $t \geq 0$. Throughout, we take $\F_t = \sigma(X_1, \ldots, X_t)$ to be the canonical filtration, with $\F_0$ being trivial. A stochastic process $M \equiv (M_t)_{t \geq 0}$ is a sequence of random variables that are adapted to $(\F_t)_{t \geq 0}$, meaning that $M_t$ is $\F_{t}$-measurable for all $t$. $M$ is called \emph{predictable} if $M_t$ is $\F_{t-1}$-measurable for all $t$. 
An integrable stochastic process $M$ is a supermartingale if $\E[M_{t+1}| \F_t] \leq M_t$ for all $t$, and a martingale if the inequality always holds with equality. Inequalities or equalities between random variables are always interpreted to hold almost surely. Throughout, we use the shorthand $\E_{t}[\cdot] = \E[\cdot | \F_t]$. 

Nonnegative supermartingales play a central role in deriving concentration inequalities due to Ville's inequality \cite{ville1939etude}.

\begin{fact} [Ville's inequality] \label{fact:villesineq} If $M$ is a nonnegative supermartingale adapted to $\{ \F_t\}_{t \geq 0}$, then for any $x > 0$,
\begin{align*}
    \Pb(\exists t \geq 0 : M_t \geq x) \leq \frac{\E[M_0]}{x}. 
\end{align*}
\end{fact}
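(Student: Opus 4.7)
The plan is to reduce the statement to an application of the optional stopping theorem via a first-hitting-time stopping rule. First I would introduce the stopping time
\[
\tau \defined \inf\{ t \geq 0 : M_t \geq x \},
\]
with the convention $\inf \emptyset = +\infty$. Since the process $M$ is adapted to $\F$ and the threshold set $[x,\infty)$ is Borel, $\tau$ is an $\F$-stopping time, and crucially the event of interest can be rewritten as
\[
\{\exists t \geq 0 : M_t \geq x\} = \{\tau < \infty\}.
\]

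Next I would pass to the stopped process $M^\tau_t \defined M_{\tau \wedge t}$. A standard fact from discrete-time martingale theory is that stopping preserves both nonnegativity and the supermartingale property; hence $(M^\tau_t)_{t \geq 0}$ is itself a nonnegative supermartingale with respect to $\F$. In particular, for every finite $t$,
\[
\E[M^\tau_t] \leq \E[M^\tau_0] = \E[M_0].
\]

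On the event $\{\tau \leq t\}$ the definition of $\tau$ forces $M_\tau \geq x$, so $M^\tau_t = M_\tau \geq x$ on this event; combined with nonnegativity on its complement, this gives
\[
\E[M^\tau_t] \geq x\, \Pb(\tau \leq t).
\]
Chaining the two displays yields $\Pb(\tau \leq t) \leq \E[M_0]/x$ for every finite $t$. The final step is to take $t \to \infty$: the events $\{\tau \leq t\}$ increase monotonically to $\{\tau < \infty\}$, so by continuity of probability (equivalently, monotone convergence on indicators) the same bound is inherited by $\Pb(\tau < \infty)$, which is exactly the stated inequality.

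The only genuinely substantive ingredient is the supermartingale preservation under stopping; the hitting-time argument and the passage to the limit are routine. If anything, the mild ``obstacle'' is purely notational: keeping the convention $\inf \emptyset = +\infty$ consistent so that the event $\{\tau \leq t\}$ is genuinely empty on sample paths where $M$ never exceeds $x$ up to time $t$, which is what makes the chain of inequalities above valid without extra case analysis.
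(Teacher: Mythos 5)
The paper states this as a \textbf{Fact} with a citation to \citet{ville1939etude} and does not supply a proof, so there is no in-paper argument to compare against. Your proof is correct and is the standard one: the first-hitting time $\tau$, the preservation of the nonnegative supermartingale property under stopping, the bound $\E[M^\tau_t] \geq x\,\Pb(\tau \leq t)$ on the event $\{\tau \leq t\}$, and the monotone limit $t \to \infty$ to pass from $\{\tau \leq t\}$ to $\{\tau < \infty\}$ are all sound. One small remark: you invoke ``the optional stopping theorem'' in the opening sentence, but what you actually use is only the elementary fact that the stopped process is again a supermartingale, evaluated at the deterministic time $t$, followed by a limit; this is deliberately weaker than full optional stopping (which would require uniform integrability or boundedness hypotheses that need not hold here), and it is precisely the right choice since a nonnegative supermartingale need not satisfy $\E[M_\tau] \leq \E[M_0]$ for an arbitrary stopping time without such side conditions. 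Your argument correctly sidesteps that issue.
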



\subsection{2-smooth Banach spaces}

If $X_1, \ldots, X_n$ are independent, centered random vectors belonging to a Hilbert space, then 
\begin{align*}
    \E \lb \ldba \sum_{i = 1}^n X_i \rdba^2 \rb 
    =  \E \lb \sum_{i = 1}^n \ldba  X_i \rdba^2 \rb.
\end{align*}
In stark contrast, the argument does not follow in Banach spaces, due to the absence of an inner product. Thus, even the apparent basic problem of bounding the expectation of the squared norm of sums of independent random variables in terms of their sum of squared norms is not straightforward in general Banach spaces. This motivates the use of Banach spaces with the so-called Rademacher types or cotypes, which are notions of probabilistic orthogonality (see e.g. \cite{ledoux2013probability}, Chapter 9). Nonetheless, such concepts are not sufficient without the assumption of independence. For martingales,
\cite{pisier1975martingales} elucidated the relevance of 2-smooth spaces, which play the same role with respect to  vector martingales as the spaces of Rademacher type 2 do with respect to the sums of independent random vectors.

\begin{definition}
A Banach space $(\X, \| \cdot \|)$ is said to be $(2, D)$-smooth for some $D > 0$ (or, in short, 2-smooth) if 
\begin{align} \label{eq:definition_2smooth}
    \| x + y \|^2 + \|x - y \|^2 \leq 2 \|x\|^2 + 2D^2 \|y\|^2, \quad \forall (x, y) \in \X^2.
\end{align}
\end{definition}

For $\X \neq \{ 0 \}$, taking $x=0$ and $y\neq0$ implies $D \geq 1$. Throughout, we will assume that $\X \neq \lc 0 \rc$, and so $D \geq 1$. 
Any Hilbert space is trivially (2, 1)-smooth. Other examples include $L^p(T, \mathcal{A}, \nu)$ being $(2, \sqrt{p-1})$-smooth for any measure space $(T, \mathcal{A}, \nu)$ and $p \geq 2$ \cite{pinelis1994optimum}, as well as Shatten trace ideals $C^p$ being $(2, \sqrt{p-1})$-smooth for $p \geq 2$ \cite{ball2002sharp}. We refer the reader to  \cite{van2020maximal}, Section 2.2, for further examples. 

Throughout, we will work with $(2, D)$-smooth separable Banach spaces. The separability assumption is ubiquitous in the literature, as it avoids measurability issues and implies the tightness of any distribution in the Banach space \cite{ledoux2013probability}. Furthermore, separability holds in most practical situations, the most prominent examples being $\R^d$ with the usual inner product, or reproducing kernel Hilbert spaces (RKHS's) associated to continuous reproducing kernels on separable domains \cite{steinwart2008support}. 

\section{Hoeffding and (empirical) Bernstein concentration} \label{section:scalar_ideas}

This section recaps the statements of the relevant existing concentration work in the scalar and Banach space settings, in order to contextualize the statement of our main result in the next section. We also briefly explain their proof at a high level, as a precursor to the long and technical proof of our result.

\subsection{Scalar Hoeffding's and Bernstein's inequalities} \label{section:scalar_bernstein}


Fix $n$ and let $X_1, \ldots, X_n$ be real random variables such that  $\E_{t-1}[X_t] = \mu$ and $|X_t - \mu| \leq B$. The scalar Hoeffding inequality \cite{hoeffding1963probability} and Bernstein inequality (see e.g. \cite{vershynin2018high}, Chapter 2) respectively imply that, for all $r > 0$,
\begin{align*}
\text{(Hoeffding)}\quad &\Pb \lp  \lba \sum_{i = 1}^n (X_i- \mu) \rba \geq r \rp \leq 2\exp \lp - \frac{r^2}{2nB^2} \rp,
 \\
 \text{(Bernstein)}\quad  &\Pb \lp  \lba \sum_{i = 1}^n (X_i- \mu) \rba \geq r \rp \leq 2\exp \lp - \frac{\frac{r^2}{2}}{\ldba \sum_{i = 1}^n  \E_{i-1}[(X_i-\mu)^2] \rdba_\infty  + r\frac{B}{3} }\rp,
\end{align*}
where $\ldba \cdot \rdba_\infty$ denotes the essential supremum of a random variable. Generally, Bernstein's inequality is instantiated for iid random variables, in which case $\ldba \E_{i-1}[(X_i-\mu)^2] \rdba_\infty$ is simply their variance. 

Bernstein's inequality relies on upper bounding the probability of the events 
\begin{align*}
    \sum_{i = 1}^n (X_i- \mu) \geq r, \quad \sum_{i = 1}^n (X_i- \mu)\leq -r,
\end{align*}
and subsequently applying the union bound. Probabilistic guarantees for each of the events may be obtained from upper bounding the moment generating function
\begin{align*} 
     \E\lb \exp\lp \lambda \sum_{i=1}^n( X_i-\mu) \rp \rb  &= \prod_{i = 1}^n\lp 1 + \sum_{k  \geq 2} \frac{\lambda^k \E_{i-1} \lb (X_i-\mu)^k\rb}{k!}\rp 
     \\&\leq \exp \lp \sum_{i =1}^n\sum_{k  \geq 2} \frac{\lambda^k \E_{i-1} \lb (X_i-\mu)^k\rb}{k!}\rp
\end{align*}
by the following expression:
\begin{align} 
    \label{eq:bernstein_scalar} \exp \lp \frac{1}{B^2}\ldba\sum_{i=1}^n \E_{i-1}[(X_i-\mu)^2]\rdba_\infty\sum_{k  \geq 2} \frac{\lambda^k B^{k} }{k!}\rp.
\end{align}
 Applying Markov's inequality using this upper bound and optimizing for $\lambda$ eventually recovers the inequality. Note that an analogous (infinite dimensional) vector-valued inequality does not immediately follow from the scalar case, as random vectors may take values in infinite directions, making a union bound argument useless.




\subsection{Pinelis' Hoeffding and Bernstein inequalities in smooth Banach spaces}

Let now $X_1, \ldots, X_n$ belong to a $(2, D)$-smooth separable Banach space such that $\E_{t-1} \lb X_t \rb = \mu$ and $\ldba X_t - \mu \rdba \leq B$. From the results of \cite{pinelis1994optimum}, Theorem 3.1, one can deduce (see Appendix~\ref{appendix:pinelis_derivation})\footnote{The Bernstein-type concentration bound presented here can be derived from \cite{pinelis1994optimum}, Theorem 3.3 (taking $\Gamma = B / 3$), as well as from \cite{pinelis1994optimum}, Theorem 3.4. We show how to obtain it from the latter in Appendix~\ref{appendix:pinelis_derivation}.}   that for all $r > 0$,
\begin{align*}
&\Pb \lp  \ldba \sum_{i = 1}^n (X_i- \mu) \rdba \geq r \rp \leq 2\exp \lp - \frac{r^2}{2nD^2B^2}\rp,
    \\&\Pb \lp  \ldba \sum_{i = 1}^n (X_i- \mu) \rdba \geq r \rp \leq 2\exp \lp - \frac{\frac{r^2}{2}}{ D^2\ldba\sum_{i = 1}^n  \E_{i-1}\lb\ldba X_i-\mu\rdba^2\rb \rdba_\infty  + r\frac{B}{3} }\rp.
\end{align*}

In order to prove the Bernstein-type concentration bound, \cite{pinelis1994optimum} proposed to study the process $\cosh \lp \lambda \| M_t \|\rp$, where $M_t = \sum_{i = 1}^t (X_i- \mu)$. Ideally, one would like to show that $R_t\cosh \lp \lambda \| M_t \|\rp$ is a supermartingale, for an appropriate $R_t$. We provide a short proof below when $\| \cdot \|$ is twice Fréchet differentiable. Let $\varphi_t(\beta) = \cosh \lp \lambda \| M_{t-1} + \beta (X_t-\mu) \| \rp$, and note that $\E_{t-1} \lb \cosh \lambda\|M_t\|\rb = \E_{t-1} \lb \varphi_t(1)\rb$.  By a Taylor expansion argument, 
\begin{align*}
    \E_{t-1} \lb \varphi_t(1)\rb &= \E_{t-1} \lb \varphi_t(0) + \varphi'_t(0) + \int_0^1 (1 - \beta)\varphi''(\beta)d\beta\rb
    \\&= \cosh\lp \lambda \ldba M_{t-1} \rdba \rp + \lambda \sinh \lp \lambda \| M_{t-1} \|\rp  \ldba M_{t-1} \rdba'\lp \E_{t-1} \lb X_t - \mu \rb \rp  
    \\&\quad+ \E_{t-1} \lb \int_0^1  (1 - \beta) \varphi''(\beta)d\beta\rb
    \\&= \cosh\lp \lambda \ldba M_{t-1} \rdba \rp + 0  + \E_{t-1} \lb \int_0^1 (1 - \beta)\varphi''(\beta)d\beta\rb
    \\&\stackrel{}{\leq} \cosh\lp \lambda \ldba M_{t-1} \rdba \rp + \cosh\lp \lambda \ldba M_{t-1} \rdba \rp D^2 \E_{t-1} \lb \sum_{k \geq 2} \frac{\lp \lambda \ldba X_t -\mu\rdba\rp^k}{k!} \rb
    \\&\stackrel{}{=} \cosh\lp \lambda \ldba M_{t-1} \rdba \rp \lp 1 + D^2\E_{t-1} \lb \sum_{k \geq 2} \frac{\lp \lambda \ldba X_t - \mu \rdba\rp^k}{k!} \rb\rp
    \\&\stackrel{}{\leq} \cosh\lp \lambda \ldba M_{t-1} \rdba \rp \exp\lp D^2\E_{t-1} \lb \sum_{k \geq 2} \frac{\lp \lambda \ldba X_t - \mu \rdba\rp^k}{k!} \rb\rp ,
\end{align*}
where $\| \cdot \|'$ denotes the Fréchet derivative of the function $\| \cdot \|$, and \cite{pinelis1994optimum} obtains the (first) key inequality by combining the properties of the $\cosh$ function and the derivatives of the norm (which is precisely the motivation for using the $\cosh$ function over the $\exp$ function). It follows that
\begin{align*}
     \cosh \lp \lambda \| M_t \|\rp \underbrace{\exp\lp -\frac{D^2}{B^2} \ldba \sum_{i = 1}^t  \E_{t-1} \lb \ldba X_t-\mu \rdba ^2\rb \rdba_\infty \sum_{k \geq 2} \frac{\lp \lambda B \rp^k}{k!} \rp}_{:= R_t},
\end{align*}
is a supermartingale. Applying Markov's inequality to $R_n \cosh \lp \lambda \| M_n \|\rp$ and optimizing for $\lambda$ eventually gives way to the stated inequalities; applying Ville's inequality would give a time-uniform bound. 

To summarize, like the scalar case, the heart of this proof is to provide upper bounds which only require second and higher moments of the random variables. In the scalar case, this may be obtained via a union bound; in the vector-valued case, it stems from the choice of the $\cosh$ function over the $\exp$ function, alongside a Taylor expansion argument. 

Importantly, however, $\| \cdot \|$ need not be Fréchet differentiable (for instance, the absolute value function is not differentiable at $0$). In a sophisticated and technical proof, \cite{pinelis1994optimum} showed that smooth, finite dimensional approximations of $R_t \cosh \lp \lambda \| M_t \|\rp$ are indeed supermartingales, with the previous arguments being applicable to such approximations. Our aim above is to summarize the key intuition behind his proof, because the proof of our main result will also rely on proving supermartingale structures of smooth, finite dimensional approximations of the original process.



\subsection{Waudby-Smith and Ramdas' scalar empirical Bernstein inequality}

As mentioned earlier, there exist several empirical Bernstein inequalities \cite{audibert2007tuning, maurer2009empirical}. We focus here on the concentration inequality provided by \cite{howard2021time} and \cite{waudby2024estimating}, which the latter paper shows yields the most accurate empirical confidence intervals and comes with better theoretical guarantees in that it recovers exactly the first order term of Bernstein's inequality, including constants, as summarized below. In stark contrast to the oracle Bernstein inequality, which requires knowledge of the variance of the random variables, it exploits an empirical estimator of the variance instead. Thus, it yields confidence intervals (and confidence sequences) that may be used in practice and are tighter than those given by Hoeffding's inequality.

Let $X_1, \ldots, X_n$ be real random variables such that $\E_{t-1}[X_t] = \mu$ and $|X_t| \leq B$. The scalar empirical Bernstein proposed in \cite{waudby2024estimating} establishes that\footnote{\cite{waudby2024estimating} stated  results for $X_t \in [0,1]$; we adapted them to $(X_t - \mu) \in [-B, B]$.}
\begin{align*}
    \Pb \lp \lba \sum_{i = 1}^n \lambda_i \frac{X_i - \mu}{2B} \rba   - \sum_{i = 1}^n \frac{\psi_E(\lambda_i)}{(2B)^2}  \lp X_i - \hat\mu_{i-1} \rp^2 \geq \log\lp\frac{2}{\alpha}\rp\rp \leq \alpha, \quad \forall \alpha \in (0, 1],
\end{align*}
where $\psi_E(\lambda) = - \log ( 1 - \lambda) - \lambda$, $\hat\mu_{i}$ is any predictable estimator of $\mu$, and $\lambda_i \in (0,1)$ is any predictable sequence.
Letting $r = \log(2/\alpha)$ makes the right hand side equal to $2\exp(-r)$, facilitating easier comparison to the earlier bounds.
Thus, 
\begin{align*}
    \lp \frac{\sum_{i = 1}^n \lambda_i X_i}{\sum_{i = 1}^n \lambda_i} \pm  \frac{(2B)\log\lp\frac{2}{\alpha}\rp + \sum_{i = 1}^n \frac{\psi_E(\lambda_i)}{2B}  \lp X_i - \hat\mu_{i-1} \rp^2}{\sum_{i = 1}^n \lambda_i}\rp
\end{align*}
constitutes a $(1-\alpha)$-confidence interval for $\mu$. For iid observations, and appropriate choices of $\lambda_i$ and $\hat\mu_i$,
\cite{waudby2024estimating} showed that
\begin{align*}
    \sqrt{n}\lp\frac{(2B)\log\lp\frac{2}{\alpha}\rp + \sum_{i = 1}^n \frac{\psi_E(\lambda_i)}{2B}  \lp X_i - \hat\mu_{i-1} \rp^2}{\sum_{i = 1}^n \lambda_i} \rp \stackrel{a.s.}{\to} \sigma \sqrt{2 \log \lp \frac{2}{\alpha}\rp}.
\end{align*}

The Bernstein inequality stated in~\eqref{eq:bernstein_scalar} yields confidence intervals whose width  is (see e.g. Appendix~\ref{appendix:radius_pinelis} for a derivation)
\begin{align*}
    \sigma\sqrt{\frac{2\log(2/\alpha)}{n}} + \frac{2B\log(2/\alpha)}{3n}.
\end{align*}

That is, the first-order asymptotic width (i.e., scaled by $\sqrt{n}$) of the confidence intervals given by \cite{waudby2020confidence} empirical Bernstein  is equal to those provided by Bernstein inequality, and it is the only closed-form interval we are aware of with this property.

The proof of the scalar empirical Bernstein proposed in \cite{waudby2024estimating} is based on nonnegative supermartingales and a simple union bound for the upper and lower inequalities, and it does not easily generalize to vector-valued random variables. The supermartingale property is proven by combining an inequality  exhibited in \cite{fan_exponential_2015}, Proposition 4.1, with an additional trick from \cite{howard2021time}, Section A.8.

\section{Empirical Bernstein in 2-Smooth Banach spaces} \label{section:main_result}

We are now ready to present Theorem~\ref{theorem:main_theorem}, the main contribution of this work.  It is a generalization of the nonnegative supermartingale construction introduced in \cite{howard2021time} and \cite{waudby2024estimating} to 2-smooth Banach spaces.



\begin{theorem}
\label{theorem:main_theorem}
Let $X_1, X_2, \ldots$ be random variables in a $(2, D)$-smooth Banach space such that
    \begin{align*}
    \E_{t}[X_{t+1}] = \mu, \quad \| X_t \| \leq B, \quad \forall t \geq 1,
    \end{align*}
Let $(\lambda_t)_{t \geq 1} \in (0, 0.8]^{\Nb}$ be any predictable sequence. Define $\psi_E(\lambda) = - \log ( 1 - \lambda) - \lambda$, and
\begin{align*}
    \bar\mu_0 = 0, \quad \bar\mu_{i} = \frac{\sum_{j = 1}^i \lambda_j X_j}{ \sum_{j = 1}^i \lambda_j}, \quad i \geq 1.
\end{align*}
Then, the process 
    \begin{align} \label{eq:main_process} 
        S_t = \cosh \lp \ldba \sum_{i = 1}^t \lambda_i \frac{X_i- \mu}{4BD} \rdba \rp \exp \lp - \sum_{i = 1}^t \frac{\psi_E(\lambda_i)}{(4B)^2} \| X_i - \bar\mu_{i-1} \|^2  \rp
    \end{align}
is a nonnegative supermartingale. 
\end{theorem}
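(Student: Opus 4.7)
Write $M_t=\sum_{i=1}^t \lambda_i(X_i-\mu)/(4BD)$ and $V_t=\sum_{i=1}^t \tfrac{\psi_E(\lambda_i)}{(4B)^2}\|X_i-\bar\mu_{i-1}\|^2$, so that $S_t=\cosh(\|M_t\|)\,e^{-V_t}$. Since $\lambda_t$, $M_{t-1}$, $V_{t-1}$ and $\bar\mu_{t-1}$ are all $\F_{t-1}$-measurable, the supermartingale property $\E_{t-1}[S_t]\leq S_{t-1}$ reduces, after cancelling $e^{-V_{t-1}}$, to the per-step inequality
\[
\E_{t-1}\!\left[\cosh\!\big(\|M_{t-1}+\Delta_t\|\big)\,\exp\!\big(-c_t\|X_t-\bar\mu_{t-1}\|^2\big)\right]\;\leq\;\cosh\!\big(\|M_{t-1}\|\big),
\]
where $\Delta_t=\lambda_t(X_t-\mu)/(4BD)$ and $c_t=\psi_E(\lambda_t)/(4B)^2$. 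A triangle-inequality argument (using that $\bar\mu_i$ is a convex combination of $X_1,\dots,X_i$) yields $\|\bar\mu_{t-1}\|\leq B$, hence $\|X_t-\bar\mu_{t-1}\|\leq 2B$, while $\|\Delta_t\|\leq \lambda_t/(2D)\leq 0.4$; these uniform bounds justify the $4B$ and $4BD$ scalings and keep the scalar inequality below within its range of validity.

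Following Pinelis, I first assume that $\|\cdot\|$ is twice Fr\'echet differentiable, deferring the general case to a limiting argument. Taylor-expand $\varphi(\beta)=\cosh(\|M_{t-1}+\beta\Delta_t\|)$ at $\beta=0$ with integral remainder. The first-derivative term $\varphi'(0)=\sinh(\|M_{t-1}\|)\,\ell_{M_{t-1}}(\Delta_t)$ is a linear functional of $X_t-\mu$ with $\F_{t-1}$-measurable coefficient. The $(2,D)$-smoothness inequality implies $\big(\|M_{t-1}+\beta\Delta_t\|^2\big)''\leq 2D^2\|\Delta_t\|^2$, and combining this with $\cosh''=\cosh$ and the convexity of $\beta\mapsto\|M_{t-1}+\beta\Delta_t\|$ yields $\varphi''(\beta)\leq D^2\|\Delta_t\|^2\,\varphi(\beta)$. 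Summing the Taylor series pointwise (rather than in expectation, as Pinelis does) produces a bound in which $\cosh(\|M_t\|)$ is controlled by $\cosh(\|M_{t-1}\|)$ times a power series in $\|\Delta_t\|$ plus a mean-zero (under $\E_{t-1}$) linear term in $X_t-\mu$. The crucial difference with Pinelis is that we must retain this factorization pointwise: the empirical weight $e^{-c_t\|X_t-\bar\mu_{t-1}\|^2}$ destroys the mean-zero property if it is pulled inside a direct $\E_{t-1}$.

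The main obstacle is then to absorb the higher-order series in $\|\Delta_t\|$ into the empirical-variance factor, leaving behind only contributions that integrate to $\cosh(\|M_{t-1}\|)$ or less. I would achieve this via the Fan--Grama--Liu scalar inequality $\exp(\lambda w-\psi_E(\lambda)w^2)\leq 1+\lambda w$ for $\lambda\in[0,1)$ and $w\geq -1$ (the same inequality underlying the Waudby-Smith--Ramdas construction), applied with $w=\|X_t-\bar\mu_{t-1}\|/(4B)\in[-1/2,1/2]$, together with the Howard--Ramdas predictable-shift identity $X_t-\mu=(X_t-\bar\mu_{t-1})+(\bar\mu_{t-1}-\mu)$. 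The shift identity separates out the predictable piece $\bar\mu_{t-1}-\mu$ as an $\F_{t-1}$-measurable multiplier that pulls out of the expectation, and the scalar inequality, combined with the Pinelis factorization, bounds the integrand of the per-step inequality pointwise by $\cosh(\|M_{t-1}\|)$ plus a term of conditional mean zero. The scalings $4B$ and $4BD$ and the choice of $\psi_E$ are calibrated so that the Pinelis Taylor series $\sum_{k\geq 2}\|\Delta_t\|^k/k!$ and the scalar bound fit together \emph{tightly}, with the higher-order terms cancelling out---this is the delicate matching that the introduction flags as the heart of the proof, and what allows the leading constants to match those of the oracle Bernstein inequality.

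Finally, I would remove the Fr\'echet differentiability assumption on $\|\cdot\|$ by Pinelis' smoothing device: approximate $\|\cdot\|$ by twice differentiable norms on an increasing chain of finite-dimensional subspaces (available by separability of $\X$), prove the supermartingale property for each smooth approximant, and pass to the limit via dominated convergence, using that $S_t$ is bounded on any finite time window. The overall architecture of the proof is the marriage of a Pinelis-style Taylor expansion in the vector direction with the Fan--Grama--Liu/WSR scalar inequality in the radial direction, stitched together by the Howard--Ramdas predictable-shift trick for handling empirical centering.
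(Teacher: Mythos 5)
Your high-level architecture --- a Pinelis-style Taylor expansion in the vector direction, combined with a scalar $\psi_E$ inequality to absorb the empirical variance weight, wrapped by a smoothing/finite-dimensional approximation argument --- is the right skeleton, and you correctly identify the core difficulty: the random factor $e^{-c_t\|X_t-\bar\mu_{t-1}\|^2}$ cannot simply be pulled inside $\E_{t-1}$ alongside Pinelis' mean-zero first-derivative argument. But the specific mechanism you propose to resolve this has two genuine gaps.

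\textbf{You must Taylor-expand the product, around a shifted center.} You expand only $\varphi(\beta)=\cosh(\|M_{t-1}+\beta\Delta_t\|)$ and try to handle $e^{-c_t\|X_t-\bar\mu_{t-1}\|^2}$ separately. The paper instead sets $A_t = M_{t-1}+\frac{\lambda_t}{D}\delta_{t-1}$ (where $\delta_{t-1}=(\bar\mu_{t-1}-\mu)/4B$), $Z_t=(X_t-\mu)/4B-\delta_{t-1}$, and Taylor-expands the \emph{product}
\[
\varphi_t(\beta)= \cosh\left\|A_t+\beta\tfrac{\lambda_t}{D}Z_t\right\|\cdot\exp\left(-\beta\,\psi_E(\lambda_t)\|Z_t\|^2\right)
\]
in $\beta$. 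Because the exponential also depends on $\beta$, the first-derivative term $\varphi_t'(0)$ carries a contribution $-\psi_E(\lambda_t)\|Z_t\|^2\cosh\|A_t\|$ that \emph{exactly cancels} the integral remainder once the latter is sharply bounded. Your version, with only $\cosh$ expanded, cannot exhibit this cancellation; instead the bound you sketch (``$\varphi(1)\le\varphi(0)\cosh(a)+\tfrac{\varphi'(0)}{a}\sinh(a)$'' with $a=D\|\Delta_t\|$, or any similar power-series bound) leaves $\varphi'(0)$ multiplied by a random factor, so $\E_{t-1}[\cdot]\neq 0$ for the ``mean-zero'' term and the argument stalls. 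Recentering at $A_t$ also produces the correction $-\frac{\lambda_t}{D}\|\delta_{t-1}\|\sinh\|A_t\|$ (since $\E_{t-1}[Z_t]=-\delta_{t-1}$), and absorbing it requires a hyperbolic inequality $\cosh(y+x)-x\sinh(y+x)\le\cosh y$, which your sketch does not account for.

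\textbf{The Fan--Grama--Liu inequality is not the right scalar tool.} You propose to close the argument with $\exp(\lambda w-\psi_E(\lambda)w^2)\le 1+\lambda w$. That inequality powers the scalar WSR proof precisely because the scalar case uses $\exp$ and can afford a one-sided bound of the form $1+\lambda w$ (whose conditional mean is $1$); the union bound then gives the two-sided result. In the Banach-space $\cosh$ approach the union bound is unavailable, and the quantity that actually appears after integrating the remainder is
\[
\lp\lambda x+\psi_E(\lambda)x^2\rp^2\frac{e^{\frac{\lambda x}{D}-\psi_E(\lambda)x^2}-\lp\frac{\lambda x}{D}-\psi_E(\lambda)x^2\rp-1}{\lp\frac{\lambda x}{D}-\psi_E(\lambda)x^2\rp^2},
\]
which must be dominated by $\psi_E(\lambda)x^2$. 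This is a different and substantially sharper inequality than Fan--Grama--Liu (the paper's Lemma~\ref{lemma:main_ineq}): the cross terms force matching up to the seventh-order Taylor coefficient, whereas Fan--Grama--Liu closes at second order. It is not something that follows by rearranging Fan--Grama--Liu; a fresh analysis is required. Without this inequality, the step where you ``absorb the higher-order series in $\|\Delta_t\|$ into the empirical-variance factor'' does not go through.

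Your reduction to finite-dimensional, finitely-valued processes, your smoothing of the norm, and your intuition about the role of the $4B$ and $4BD$ scalings are all consistent with the paper; the approximation/dominated-convergence passage also needs some care (the paper's Lemma~\ref{lemma:supermartingale_approximations} replaces the naive ``dominated convergence on a finite time window'' with a supermartingale-transfer argument using bounded continuous test functions), but that is a solvable technicality rather than a conceptual gap.
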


The theorem remains equally valid  under the assumption $\| X_t -\mu\| \leq B$. We defer its proof to Section \ref{section:main_proof}; it nontrivially combines ideas from \cite{howard2021time} and \cite{waudby2024estimating} with the sophisticated techniques from \cite{pinelis1994optimum}. The process itself strongly resembles that of \cite{waudby2024estimating}, but replacing the $\exp$ function by the $\cosh$ function, so that the martingale property of the sum of random variables can be established without applying the union bound (in a similar spirit to  \cite{pinelis1994optimum} discussed earlier). Nonetheless, a variety of new technical challenges arise in this proof, due to the complexity of the novel supermartingale construction. For example, the inequality presented in Lemma~\ref{lemma:main_ineq} is quite subtle (which may be noted by the need of studying a $7$-th degree Taylor coefficient), and is key in the result.



The nonnegative supermartingale established in Theorem~\ref{theorem:main_theorem} gives way to anytime concentration bounds in $(2, D)$-smooth Banach spaces due to Ville's inequality. The following corollary exhibits the subsequent empirical Bernstein inequality; we provide its proof in Appendix~\ref{section:proof_main_corollary}. 

\begin{corollary} [The empirical Bernstein inequality for $(2, D)$-smooth Banach spaces] \label{cor:empirical_bernstein} 
    Let $X_1, X_2, \ldots$ belong to a $(2, D)$-smooth Banach space. Under the assumptions of Theorem~\ref{theorem:main_theorem}, with probability $1 - \alpha$ for any $\alpha \in (0, 1)$, and simultaneously for all $t \geq 1$,
    \begin{align*}
        \ldba \frac{\sum_{i=1}^{t} \lambda_i X_i}{\sum_{i=1}^{t} \lambda_i} - \mu \rdba \leq D\frac{\frac{1}{4B} \sum_{i=1}^t \psi_E(\lambda_i) \ldba X_i - \bar\mu_{i-1} \rdba^2 + 4B\log \frac{2}{\alpha}}{\sum_{i=1}^{t} \lambda_i}.
    \end{align*}
\end{corollary}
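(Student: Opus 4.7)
The plan is straightforward: apply Ville's inequality (Fact~\ref{fact:villesineq}) to the nonnegative supermartingale $S_t$ established in Theorem~\ref{theorem:main_theorem}, then decode the resulting event using the elementary bound $\cosh(y) \geq e^y/2$ for $y \geq 0$. Observe that $S_0 = \cosh(0)\exp(0) = 1$, so Ville's inequality with threshold $1/\alpha$ gives $\Pb(\exists t \geq 1 : S_t \geq 1/\alpha) \leq \alpha$. Equivalently, on an event of probability at least $1-\alpha$, $S_t < 1/\alpha$ simultaneously for all $t \geq 1$.

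On that event, lower bounding $S_t$ by $\cosh(y) \geq e^y/2$ produces
\[
    \frac{1}{2} \exp\lp \ldba \sum_{i=1}^t \lambda_i \frac{X_i - \mu}{4BD} \rdba - \sum_{i=1}^t \frac{\psi_E(\lambda_i)}{(4B)^2} \ldba X_i - \bar\mu_{i-1} \rdba^2 \rp \leq S_t < \frac{1}{\alpha}.
\]
Taking logarithms and multiplying through by $4BD$ rearranges this to
\[
    \ldba \sum_{i=1}^t \lambda_i (X_i - \mu) \rdba < 4BD \log\lp \frac{2}{\alpha} \rp + \frac{D}{4B} \sum_{i=1}^t \psi_E(\lambda_i) \ldba X_i - \bar\mu_{i-1} \rdba^2.
\]
I would then factor the left-hand side as $\lp \sum_{i=1}^t \lambda_i \rp \ldba \frac{\sum_{i=1}^t \lambda_i X_i}{\sum_{i=1}^t \lambda_i} - \mu \rdba$ and divide through by $\sum_{i=1}^t \lambda_i$ to recover exactly the inequality stated in the corollary.

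There is no genuine obstacle: all the technical work has already been absorbed into establishing the supermartingale property in Theorem~\ref{theorem:main_theorem}. The factor of $2$ inside the logarithm---reminiscent of the union bound over the two one-sided events in the scalar Bernstein proof---arises here purely from the $\cosh$-to-$\exp$ conversion, which is the mechanism by which the two-sided deviation of the vector-valued process is controlled without any union bound (precisely as in Pinelis' oracle argument recalled in Section~\ref{section:scalar_ideas}).
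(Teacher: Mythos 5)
Your proof is correct and follows exactly the paper's argument in Appendix~\ref{section:proof_main_corollary}: apply Ville's inequality to the nonnegative supermartingale from Theorem~\ref{theorem:main_theorem} (noting $S_0=1$), lower bound $\cosh$ by $\tfrac{1}{2}e^{|\cdot|}$, then take logarithms and rescale by $4BD/\sum_i\lambda_i$. No differences worth noting.
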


The choice of appropriate sequences $(\lambda_t)$ is key in the tightness of the confidence sequences. We study two different scenarios separately. First, we propose predictable $(\lambda_t)$ for the (more classical) batch setting, where the sample size is fixed from the beginning.  Second, we analyze the sequential setting, where the sample size is not fixed beforehand and observations are made one at a time. In either case, the proposed predictable sequence $(\lambda_t)$ scales inversely with the square roots of the sequential empirical variance and sample size (up to logarithmic factors). Note that the scale of $(\lambda_t)$ does not change the center of the confidence ball, but it significantly influences its radius. 

\subsection{Fixed sample size empirical Bernstein confidence sets} \label{section:fixed_sample_size}

We first revisit the more classical problem of having a sample $X_1, \ldots, X_n$ for a fixed sample size $n$. Generally, the observations are assumed to be independent and identically distributed. Nonetheless, it suffices that the observations attain the (substantially more lenient) assumptions from Theorem~\ref{theorem:main_theorem}. We propose to instantiate Corollary~\ref{cor:empirical_bernstein} with $(\lambda_t) = (\lambda_t^{\text{CI}})$, where
\begin{align*}  
    \lambda_t^{\text{CI}} := \min\lp\sqrt{\frac{2(4B)^2 \log (\frac{2}{\alpha})}{\hat\sigma_{t-1}^2n}}, c_1 \rp, \quad \hat\sigma_{t}^2 = \frac{c_2B^2 + \sum_{i = 1}^t \| X_i - \bar\mu_{i-1} \|^2}{t+1}, 
\end{align*}
for some $c_1 \in (0,0.8]$ and $c_2 \in [0, 1]$. The parameter $c_1$ controls that $\lambda_t^{\text{CI}}$ does not explode in the event of small $\hat\sigma_{t-1}^2$, and parameter $c_2 > 0$ avoids extremely small values of $\hat\sigma_{t-1}^2$ for small $t$ (note that, for small $t$, $\hat\sigma_{t-1}^2$ may be substantially small with considerable probability). Reasonable default choices are $c_1 = 1/2$ or $c_1 = 3/4$, and $c_2 = 1/4$.

For independent and identically distributed random variables, the following theorem establishes the limiting radius of the confidence sets given by empirical Bernstein with the previous choice $(\lambda_t) = (\lambda_t^{\text{CI}})$. Its proof may be found in Appendix ~\ref{proof:radius_ball_iid}.
\begin{proposition} \label{proposition:radius_ball_iid}
    Fix $n \in \Nb$, and let $X_1, \ldots, X_n \stackrel{iid}{\sim} X$. Denote $\E\lb X\rb = \mu$, and $\sigma^2 = \E\lb \| X - \mu \|^2\rb$. The radius of the confidence ball multiplied by $\sqrt n$ has an asymptotic limit:
    \begin{align*}
        \sqrt{n}\lp D\frac{\frac{1}{4B} \sum_{i=1}^n \psi_E(\lambda_i^{\text{CI}}) \ldba X_i - \bar\mu_{i-1} \rdba^2 + 4B\log \frac{2}{\alpha}}{\sum_{i=1}^{t} \lambda_i^{\text{CI}}}\rp \stackrel{a.s.}{\to} \sigma D\sqrt{2\log\lp\frac{2}{\alpha} \rp}.
    \end{align*}
\end{proposition}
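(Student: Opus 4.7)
The plan is to analyze the numerator and denominator of the radius separately and then take their ratio. Writing $A := 2(4B)^2\log(2/\alpha)$, so that $\lambda_i^{\text{CI}} = \min(\sqrt{A/(\hat\sigma_{i-1}^2 n)}, c_1)$, and decomposing the radius as $r_n = D(A_n + B_n)/C_n$ with $A_n = \frac{1}{4B}\sum_{i=1}^n \psi_E(\lambda_i^{\text{CI}})\|X_i - \bar\mu_{i-1}\|^2$, $B_n = 4B\log(2/\alpha)$, and $C_n = \sum_{i=1}^n \lambda_i^{\text{CI}}$, the target is $\sqrt n\, r_n \to D\sigma\sqrt{2\log(2/\alpha)}$ almost surely.

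The first preliminaries are the almost-sure consistencies $\bar\mu_n \to \mu$ and $\hat\sigma_n^2 \to \sigma^2$. The former can be shown by applying Pinelis' Bernstein-type inequality to the Banach-valued martingale $M_n := \sum_{i=1}^n \lambda_i^{\text{CI}}(X_i-\mu)$, combined with the deterministic lower bound $C_n \geq \sqrt{An/(5B^2)}$ (which uses $\hat\sigma_{i-1}^2 \leq (c_2+4)B^2 \leq 5B^2$). The latter then follows from the triangle-style estimate $\bigl|\|X_i-\bar\mu_{i-1}\|^2 - \|X_i-\mu\|^2\bigr| \leq 4B\|\bar\mu_{i-1}-\mu\|$ and the scalar SLLN applied to $\|X_i-\mu\|^2$. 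On the resulting almost-sure event there exists $t_0 = t_0(\omega)$ with $\hat\sigma_{i-1}^2 \geq \sigma^2/2$ for all $i \geq t_0$, while $\hat\sigma_{i-1}^2 \geq c_2 B^2/i > 0$ for $i \leq t_0$; hence for $n$ large enough (depending on $\omega$), $\sqrt{A/(\hat\sigma_{i-1}^2 n)} \leq c_1$ for every $i \leq n$, so the truncation in $\lambda_i^{\text{CI}}$ is asymptotically inactive and one may substitute $\lambda_i^{\text{CI}} = \sqrt{A/(\hat\sigma_{i-1}^2 n)}$.

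The computations then run as follows. For the denominator, $C_n/\sqrt n = (\sqrt A/n)\sum_{i=1}^n 1/\hat\sigma_{i-1} \to \sqrt A/\sigma$ by a Cesaro argument (the finitely many initial terms vanish after dividing by $n$, while the tail uses $1/\hat\sigma_{i-1} \to 1/\sigma$). For the numerator, the Taylor expansion $\psi_E(\lambda) = \lambda^2/2 + O(\lambda^3)$, applied uniformly since $\lambda_i^{\text{CI}} = O(1/\sqrt n)$, yields $A_n = (A/(8Bn))\sum_{i=1}^n \|X_i-\bar\mu_{i-1}\|^2/\hat\sigma_{i-1}^2 + o(1)$. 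The Cesaro average on the right tends a.s.\ to $1$: replace $\bar\mu_{i-1}$ by $\mu$ using $\bar\mu_{i-1}\to\mu$, and $\hat\sigma_{i-1}^2$ by $\sigma^2$ using $\hat\sigma_{i-1}^2 \to \sigma^2$, reducing the task to the scalar SLLN for $\|X_i-\mu\|^2/\sigma^2 \to 1$. Therefore $A_n \to A/(8B) = 4B\log(2/\alpha)$, $A_n + B_n \to 8B\log(2/\alpha)$, and $\sqrt n\, r_n = D(A_n+B_n)/(C_n/\sqrt n) \to D \cdot 8B\log(2/\alpha)/(\sqrt A/\sigma) = D\sigma\sqrt{2\log(2/\alpha)}$.

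The subtlest step is the consistency of $\bar\mu_n$. Because the weights $\lambda_i^{\text{CI}}$ are adaptive and depend on the horizon $n$, $M_n$ is effectively a triangular martingale array. A direct use of Pinelis' Hoeffding inequality gives only $\|M_n\| = O(\sqrt{n\log n})$, hence $\|M_n\|/C_n = O(\sqrt{\log n})$, which does not vanish. Invoking Pinelis' Bernstein inequality instead requires controlling the predictable variation $\sigma^2 \sum_{i=1}^n (\lambda_i^{\text{CI}})^2$, which is $O(1)$ only once the stabilisation $\hat\sigma^2 \to \sigma^2$ is known; closing this loop calls either for a bootstrapping argument that converts a crude (in-probability) bound into an a.s.\ one, or for a direct appeal to a dedicated strong law for martingales in 2-smooth Banach spaces.
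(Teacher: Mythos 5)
Your decomposition (numerator, denominator, Taylor expand $\psi_E$, Cesaro) is exactly the one the paper uses in Lemmas~\ref{lemma:sigma_sigmahat}--\ref{lemma:sum_lambdas_converges}, and your final arithmetic reproducing $D\sigma\sqrt{2\log(2/\alpha)}$ is correct. So structurally you are on the same path as the paper's proof.

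Where you diverge is in the treatment of $\bar\mu_{i-1}\to\mu$, and here you explicitly leave a gap open: you note the triangular-array obstruction and say it ``calls either for a bootstrapping argument \ldots or for a direct appeal to a dedicated strong law for martingales in 2-smooth Banach spaces,'' but do not carry either option out. The paper, in Lemma~\ref{lemma:sigma_sigmahat}, resolves this step by a single appeal to the strong law for iid separable-Banach-valued variables \citep[Thm.\ 2.4]{bosq2000linear}, which gives $\|\mu-\bar\mu_{i-1}\|\to 0$ a.s.\ and lets the rest of the lemma go through via the triangle inequality and the scalar SLLN for $\|X_i-\mu\|^2$. Your instinct that this is subtler than a one-line SLLN citation suggests is defensible, since $\bar\mu_{i}$ is the $\lambda$-weighted average and the weights are data-adaptive and carry the horizon $n$; one helpful observation neither of you uses is that when the truncation at $c_1$ is inactive, $\lambda_j^{\text{CI}} = \sqrt{A/n}/\hat\sigma_{j-1}$ has a common $\sqrt{A/n}$ factor that cancels in $\bar\mu_i$, so $\bar\mu_i$ and $\hat\sigma_i^2$ are then $n$-independent and form an ordinary (not triangular) recursively defined sequence, for which a martingale-SLLN argument is more tractable. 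As submitted, though, your proof is incomplete at this step; everything else matches the paper and checks out, including the deterministic lower bound on $C_n$ and the limit computations.
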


As exhibited in Appendix~\ref{appendix:radius_pinelis}, this is precisely the limiting radius of the oracle Bernstein confidence sets derived from \cite{pinelis1994optimum}, Theorem 3.1, where $\sigma^2$ is known. Further, the limiting radius given by Proposition~\ref{proposition:radius_ball_iid} coincides with the limiting width of the scalar empirical Bernstein confidence sets given in \cite{waudby2024estimating} multiplied times $D$, which dictates the smoothness of the problem. As noted by \cite{pinelis1994optimum}, the (lack of) smoothness parameter $D$ has the effect of inflating the variance by $D^2$.

\subsection{Anytime valid empirical Bernstein confidence sequences} \label{section:sequential}

Let now $X_1, X_2, \ldots$ be a stream of data, where observations are made one at a time. Corollary~\ref{cor:empirical_bernstein} opens the door to  constructing confidence sequences, those are,
sequences of confidence intervals that are uniformly valid over an unbounded time horizon. This  allows for
conducting safe anytime-valid inference, given that its probabilistic guarantees hold simultaneously for all $t$. Consequently, a practitioner may peak at the confidence sequence at any time to make statistical decisions that remain valid. 

In this scenario, we propose to instantiate Corollary~\ref{cor:empirical_bernstein} with $(\lambda_t) = (\lambda_t^{\text{CS}})$, where
\begin{align*}  
    \lambda_t^{\text{CS}} := \min\lp\sqrt{\frac{2(4B)^2 \log (\frac{2}{\alpha})}{\hat\sigma_{t-1}^2t\log(1+t)}}, c_1 \rp, \quad \hat\sigma_{t}^2 = \frac{c_2B^2 + \sum_{i = 1}^t \| X_i - \bar\mu_{i-1} \|^2}{t+1}, 
\end{align*}
for some $c_1 \in (0,0.8]$ and $c_2 \in [0, 1]$, with again reasonable default choices being $c_1 = 1/2$ or $c_1 = 3/4$,  and $c_2 = 1/4$.

Note that, in comparison to the batch setting, there is an extra $\log(1+t)$ factor. This is motivated by the fact that $\lambda_i \asymp 1 / \sqrt{i \log i}$ implies that the width scales as $\sqrt{\log t / t}$ \cite{waudby2024estimating}, which is the width of the conjugate mixture Hoeffding confidence sequence, as exhibited in \cite{howard2021time}, Proposition 2. While this width is greater than that obtained by the law of the iterated logarithm (LIL) in the limit, mixture boundaries achieve good empirical performance 
 \cite{howard2021time}. 
 
 Nonetheless, it is also possible to derive an upper LIL from Theorem \ref{theorem:main_theorem}.
 \begin{corollary} \label{corollary:lil}
     Denote $M_t = \sum_{i = 1}^t \frac{X_i- \mu}{D}$ and
     $V_t = \sum_{i = 1}^t \| X_i - \bar\mu_{i-1} \|^2 $. Under the assumptions of Theorem~\ref{theorem:main_theorem}, 
     \begin{align*}
         \limsup_{t \to\infty} \frac{\ldba M_t \rdba}{\sqrt{2V_t \log\log V_t}} \leq 1 \quad \text{ on } \lc \sup_t V_t = \infty \rc.
     \end{align*}
 \end{corollary}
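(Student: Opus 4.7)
The plan is to instantiate Theorem~\ref{theorem:main_theorem} with a constant predictable sequence $\lambda_i \equiv \lambda$ for each $\lambda \in (0, 0.8]$, which yields the nonnegative supermartingale
\begin{equation*}
S_t(\lambda) \;=\; \cosh\lp \frac{\lambda}{4B}\,\ldba M_t \rdba \rp \exp\lp -\frac{\pe(\lambda)}{(4B)^2}\, V_t \rp .
\end{equation*}
Combining Ville's inequality with the elementary bound $\cosh(u)\geq \tfrac{1}{2}e^u$, one obtains that with probability at least $1-\alpha$, for all $t \geq 1$,
\begin{equation*}
\ldba M_t \rdba \;\leq\; \frac{4B\log(2/\alpha)}{\lambda} + \frac{\pe(\lambda)}{4B\lambda}\, V_t .
\end{equation*}
Since $\pe(\lambda) = \lambda^2/2 + O(\lambda^3)$ as $\lambda\downarrow 0$, the oracle choice of $\lambda$ (for a given $V_t$) would yield a bound of the form $\sqrt{2V_t \log(2/\alpha)}\,(1+o(1))$; the remaining task is to design a discretization in $\lambda$ that realizes this shape uniformly in $t$.

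Next I would fix $\eta > 1$ and $\delta > 0$, set $\alpha_k = c/(k(\log k)^{1+\delta})$ for $k \geq 2$ (with $c$ chosen so that $\sum_k \alpha_k = \epsilon$ for a prescribed small $\epsilon$), and take $\lambda_k = 4B\sqrt{2\log(2/\alpha_k)/\eta^{k+1}}$. Since $\alpha_k$ decays polynomially while $\eta^{k+1}$ grows geometrically, $\lambda_k \to 0$, so $\lambda_k \in (0,0.8]$ for all $k$ sufficiently large. Applying Ville to each $S_t(\lambda_k)$ at level $\alpha_k$ and union-bounding over $k$, the displayed inequality holds, with probability at least $1-\epsilon$, simultaneously for every $k$ and every $t$. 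On the bucket event $\{V_t \in [\eta^k, \eta^{k+1})\}$, substituting $\lambda_k$ and using $\pe(\lambda_k) = (\lambda_k^2/2)(1+o(1))$ as $k \to \infty$ gives
\begin{equation*}
\ldba M_t \rdba \;\leq\; \sqrt{2\eta^{k+1}\log(2/\alpha_k)}\,(1+o(1)) \;\leq\; \sqrt{2\eta\, V_t\,\log(2/\alpha_k)}\,(1+o(1)).
\end{equation*}

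Finally, on $\{\sup_t V_t = \infty\}$ monotonicity of $V_t$ gives $V_t \to \infty$, so the bucket index $k=k(t)$ satisfies $k \to \infty$; since $\log(2/\alpha_k)=\log k + O(\log\log k)$ and $\log\log V_t \geq \log k + \log\log \eta$, the ratio $\log(2/\alpha_k)/\log\log V_t$ tends to $1$. This yields $\limsup_{t\to\infty} \ldba M_t\rdba /\sqrt{2V_t\log\log V_t}\leq \sqrt{\eta}$ almost surely on $\{\sup_t V_t = \infty\}$. A union bound over countable sequences $\epsilon_n \downarrow 0$ and $\eta_n \downarrow 1$ then delivers the stated $\limsup \leq 1$. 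The main obstacle will be the coupled passage to the limit: the $(1+o(1))$ errors coming from $\pe(\lambda_k) \approx \lambda_k^2/2$ and from $\log(2/\alpha_k)\sim \log\log V_t$ must be controlled together as the bucket index grows, which is possible only because the geometric spacing forces $\lambda_k \downarrow 0$; apart from this bookkeeping, the argument is the standard stitching LIL derivation of \citet{howard2021time} adapted to the vector-valued supermartingale of Theorem~\ref{theorem:main_theorem}.
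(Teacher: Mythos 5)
Your proof is correct, and it is conceptually the same argument the paper uses. Freezing the predictable sequence at a constant $\lambda$ in Theorem~\ref{theorem:main_theorem} and applying $e^u \leq 2\cosh u$ gives the exponential supermartingale
$\tfrac12\exp\!\bigl(\tfrac{\lambda}{4B}\lVert M_t\rVert - \tfrac{\psi_E(\lambda)}{(4B)^2}V_t\bigr)$, dominated by $2S_t(\lambda)$.
The paper phrases this as ``$\lVert M_t\rVert$ is $2$-sub-$\psi_{E,4B}$ with variance process $V_t$'' and then simply cites \citet[Corollary~1]{howard2021time}, noting that $\psi_{E,4B}(\lambda)\sim\lambda^2/2$ as $\lambda\downarrow 0$. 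What you do is unfold that citation by hand: the geometric epochs in $V_t$, the Bertrand-series allocation $\alpha_k\asymp 1/(k(\log k)^{1+\delta})$, the per-epoch tuning $\lambda_k\asymp\sqrt{\log(1/\alpha_k)/\eta^{k+1}}$, the $(1+o(1))$ control that hinges on $\lambda_k\downarrow 0$, and the final countable intersection over $\eta_n\downarrow 1$, $\epsilon_n\downarrow 0$ are precisely the stitching construction underlying the cited corollary. You handle the delicate points correctly: restricting to $k$ large enough so that $\lambda_k\in(0,0.8]$ (finitely many small $k$ are irrelevant for the $\limsup$ on $\{\sup_t V_t=\infty\}$), replacing $\eta^{k+1}$ by $\eta V_t$ on the bucket $\{V_t\in[\eta^k,\eta^{k+1})\}$, and verifying $\log(2/\alpha_k)/\log\log V_t\to 1$. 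Your version is more self-contained and makes the mechanism and the role of each hyperparameter explicit, at the cost of the $(1+o(1))$ bookkeeping; the paper's one-line citation is shorter but opaque without reading the reference. I see no gap.
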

 Corollary~\ref{corollary:lil} provides an asymptotic result, but
 finite LIL bounds \cite{darling1967iterated, robbins1968iterated} may as well be derived from Theorem \ref{theorem:main_theorem}. That is, confidence sequences that scale as $O\lp \sqrt{V_t \log\log V_t}\rp$ in the finite sample regime can also be obtained. As an illustrative example, the following corollary exhibits a closed-form finite LIL bound for $\alpha = 0.05$ and $B=1/4$.

 \begin{corollary} \label{corollary:finite_lil}
     Consider the case $\alpha = 0.05$ and $B=1/4$. The set composed of the vectors $x$ such that
     \begin{align} \label{eq:finite_lil_bound}
       \ldba x - \frac{\sum_{i = 1}^t X_i}{t} \rdba \leq D \frac{1.7\sqrt{\lp V_t \vee 1 \rp\lp\log\log (2(V_t \vee 1)) + 3.8\rp} + 3.4 \log\log(2(V_t \vee 1)) + 13}{t} 
  \end{align}
  constitutes a $95\%$-confidence sequence for $\mu$.
 \end{corollary}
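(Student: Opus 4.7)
The plan is to derive this finite LIL bound by applying a discrete mixture (stitching) argument to the supermartingales from Theorem~\ref{theorem:main_theorem}, combined with Ville's inequality. This is standard practice for turning a Chernoff-style supermartingale into a time-uniform bound that matches the LIL rate up to finite-sample corrections, and it is analogous to the polynomially stitched boundaries of \cite{howard2021time} adapted to the new empirical Bernstein process.

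Concretely, for any fixed $\lambda \in (0, 0.8]$, instantiating Theorem~\ref{theorem:main_theorem} with the constant sequence $\lambda_i \equiv \lambda$ gives a nonnegative supermartingale
\[ S_t(\lambda) = \cosh\!\lp \lambda \ldba \sum_{i=1}^t \tfrac{X_i-\mu}{4BD} \rdba \rp \exp\!\lp - \tfrac{\psi_E(\lambda)}{(4B)^2} V_t \rp, \quad S_0(\lambda)=1, \]
where $V_t = \sum_{i=1}^t \|X_i - \bar\mu_{i-1}\|^2$. I would then choose a geometric grid $\lambda_k = \eta^{-k} \lambda_0$, $k \geq 0$, with some $\lambda_0 \leq 0.8$ and ratio $\eta > 1$, together with positive weights $\{w_k\}$ summing to $1$ whose logarithms grow like $\log k$ (for instance $w_k \propto 1/(k+1)^2$, normalized by $\pi^2/6$). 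The convex combination $\bar S_t := \sum_{k \geq 0} w_k S_t(\lambda_k)$ is a nonnegative supermartingale starting at $1$, so by Ville's inequality (Fact~\ref{fact:villesineq}), $\bar S_t \leq 1/\alpha$ for every $t \geq 1$ on an event $E$ of probability at least $1-\alpha$. Lower-bounding $\bar S_t$ by any single summand $w_{k} S_t(\lambda_{k})$, applying $\cosh(x) \geq \exp(x)/2$, and rearranging yields, on $E$ and for every $k$ and $t$,
\[ \ldba \sum_{i=1}^t (X_i - \mu) \rdba \leq \tfrac{D}{4B \lambda_k} \psi_E(\lambda_k) V_t + \tfrac{4BD}{\lambda_k} \log\tfrac{2}{\alpha w_k}. \]
For each realization and each $t$, I would select $k^{*}(t)$ to match the oracle minimizer $\lambda^*(V_t) \asymp 4B\sqrt{\log(1/(\alpha w_{k^*}))/V_t}$, thereby ensuring $k^*(t) \asymp \log(V_t \vee 1)$ up to a constant. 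Using the tight bound $\psi_E(\lambda) \leq \lambda^2/(2(1-\lambda))$ and plugging in this $k^*$ then yields an inequality of the form
\[ \ldba \tfrac{1}{t}\sum_{i=1}^t X_i - \mu \rdba \leq \tfrac{D}{t}\lp c_1 \sqrt{(V_t \vee 1)(\log\log(2(V_t \vee 1)) + c_2)} + c_3 \log\log(2(V_t \vee 1)) + c_4 \rp, \]
with the floor $V_t \vee 1$ and the shift by $2$ inside the iterated logarithm handling the small-$V_t$ regime where the bound is dominated by $\log(1/\alpha)$ rather than by the variance term.

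The main obstacle is not any single structural step but the tight numerical optimization needed to land on the constants $1.7$, $3.8$, $3.4$, $13$: the discretization loss from approximating the oracle $\lambda^*$ by the nearest grid point inflates the leading coefficient above the LIL-optimal $\sqrt{2}$ (hence $1.7$), and one must trade off the grid ratio $\eta$ against the $\log(1/w_k)$ growth rate, handle the truncation at $\lambda \leq 0.8$ so that the grid really sits inside the valid range, and carefully absorb $4B\log(2/\alpha) = \tfrac14 \log 40$, the weight normalization, and small-$t$ slack into the additive constants $c_3, c_4$. With $\alpha = 0.05$ and $B = 1/4$ fixed, this is a finite numerical calculation (\emph{a la} \cite{howard2021time}), and I expect $\eta$ close to $2$ together with $w_k = 6/(\pi^2(k+1)^2)$ to produce the stated constants after tightening the inequalities appearing in Steps above.
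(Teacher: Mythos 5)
Your overall plan — use the constant-$\lambda$ instances of the Theorem~\ref{theorem:main_theorem} supermartingale together with a union/mixture argument to get a time-uniform bound at LIL rate — is sound in spirit, but it differs from the paper's route in a way that matters for the stated constants, and it conflates two distinct techniques. The paper's proof first observes that, since $e^x \le 2\cosh(x)$, the constant-$\lambda$ supermartingale dominates $\tfrac12 \exp\bigl(\lambda\|M_t\| - \psi_{E,4B}(\lambda) V_t\bigr)$, so $\|M_t\|$ is $2$-sub-$\psi_{E,4B}$ (hence also $2$-sub-$\psi_{G,4B}$, by $\psi_{E,c}\le\psi_{G,c}$) with variance process $V_t$. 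It then applies the \emph{polynomial stitching} theorem of \citet[Theorem 1]{howard2021time} with $l_0 = 2$, $\eta=2$, $m=1$, and $h(k)\propto k^{1.4}$; plugging $\alpha=0.05$ and $B=1/4$ into the closed-form boundary gives exactly the constants $1.7$, $3.8$, $3.4$, $13$.

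Your proposal instead builds a \emph{discrete mixture} supermartingale $\bar S_t = \sum_k w_k S_t(\lambda_k)$ and applies Ville's inequality once. That is a legitimate alternative route (also discussed in \citet{howard2021time}), but it is not the same as stitching: stitching performs a union bound over geometric epochs of the \emph{intrinsic time} $V_t$, allocating failure probability $\alpha / h(k)$ to each epoch, and chooses $\lambda_k$ to be (near-)optimal at the epoch's endpoint; a mixture allocates prior mass over $\lambda$ once and for all. The two produce bounds of the same $\sqrt{V_t \log\log V_t}$ order but with genuinely different constants, and there is no reason to expect your mixture tuning (e.g., $w_k\propto 1/(k+1)^2$, $\eta\approx 2$) to reproduce the specific values $1.7, 3.8, 3.4, 13$, which are an artifact of the particular $h(k)\propto k^{1.4}$ error allocation and $\eta=2$, $m=1$ parameterization in the stitching boundary. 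In short: your high-level idea (instantiate constant $\lambda$, use $\cosh\ge e/2$ to get an exponential supermartingale, then chain geometrically) is the right skeleton, but to actually prove \emph{this} corollary you should follow the paper and invoke Howard et al.'s stitching Theorem 1 directly after establishing the sub-$\psi$ property, rather than re-deriving a mixture boundary from scratch and hoping the constants land where they should.
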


A detailed derivation of finite LIL bounds for arbitrary values of $\alpha$ and $B$, as well as short proofs of Corollary~\ref{corollary:lil} and Corollary~\ref{corollary:finite_lil}, are provided in Appendix~\ref{appendix:lil}. Both LIL bounds follow from  the results exhibited in \cite{howard2021time}, alongside the nonnegative supermartingale construction from Theorem~\ref{theorem:main_theorem}.

\subsection{Empirical examples} \label{section:experiments}

We now run simple experiments to visualize the empirical performance of methods discussed in previous subsections. The code can be found in the Supplementary Material \cite{supp_data}.

\subsubsection*{Finite dimensional data} Figure~\ref{fig:ci} exhibits the empirical radii of the Hoeffding-type sets with our proposed empirical Bernstein confidence sets, using the tuning in Proposition~\ref{proposition:radius_ball_iid}. In both plots, the observations are iid and lie in $[-1,1]^5$, each of the components being drawn independently from  (I) a Rademacher distribution (left) or (II) a uniform distribution on $[-1, 1]$ (right). 
  In both cases, the observations are bounded by $B = \sqrt{5}$. 
  The left plot displays typical behavior of empirical Bernstein confidence sets for an extreme (high variance) distribution like the Rademacher case, where the variance is equal to $B^2$:   the empirical Bernstein bound is slightly looser than the other two at small sample sizes. In the low variance case (right plot), the empirical Bernstein confidence sets are significantly tighter than the Hoeffding bound, and only slightly looser than the (oracle) Bernstein bound. 
  These are behaviors that one observes already in the scalar case, but it is heartening that they carry forward as expected to the vector case.

\begin{figure}[t] 
    \center \includegraphics[width=\textwidth]{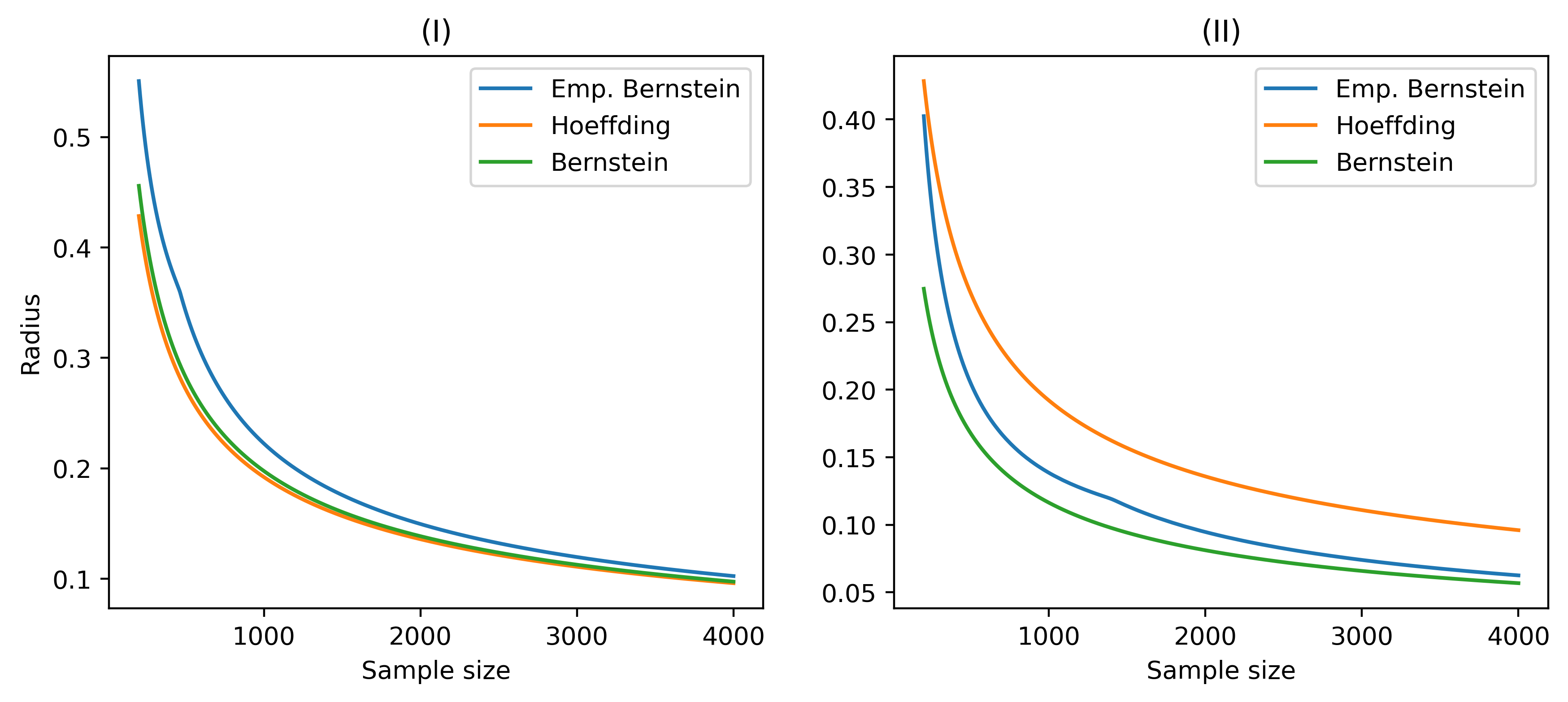}
  \caption{Radius of the confidence sets given by the empirical Bernstein (our proposal) and Hoeffding/Bernstein type  bounds \cite{pinelis1994optimum}, averaged over $100$ repetitions. The oracle Bernstein-type bound is the target to which we compare the other two. The Hoeffding-type bound is slightly better in the left plot (high variance) but much worse in the right plot (low variance). The empirical Bernstein bound is only slightly worse on both plots, with the difference in widths (even after being multiplied by $\sqrt n$) vanishing asymptotically, as indicated by Proposition~\ref{proposition:radius_ball_iid}.} 
  \label{fig:ci}
\end{figure}

\begin{figure}[] 
    \center \includegraphics[width=\textwidth]{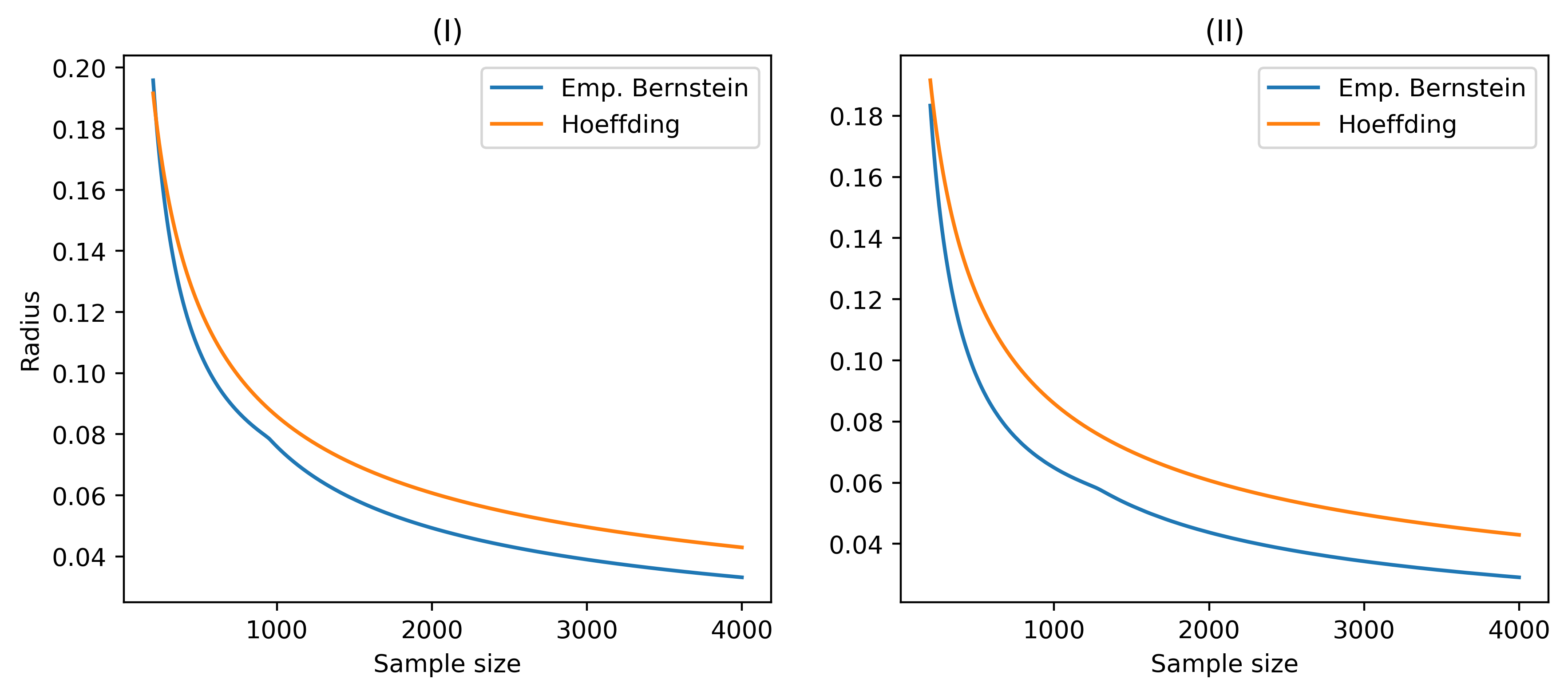}
  \caption{Radius of the RKHS-valued confidence sets given by the empirical Bernstein (our proposal) and Hoeffding-type  inequalities \cite{pinelis1994optimum}, averaged over $100$ repetitions. The empirical Bernstein inequality outperforms the Hoeffding-type inequality in both scenarios.} 
  \label{fig:ci_kernel}
\end{figure}

\subsubsection*{RKHS-valued data}  Figure~\ref{fig:ci_kernel} exhibits the empirical radii of the Hoeffding-type sets alongside our proposed empirical Bernstein confidence sets, for data in a reproducing kernel Hilbert space (RKHS). In both plots, the observations are the RBF kernel embeddings of variables drawn independently from  (I) a one-dimensional Rademacher distribution (left) or (II) a one-dimensional uniform distribution on $[-1, 1]$ (right). In both cases, the observations are bounded by $B = 1$. None of the variances of the embedded random variables is equal to the bound $1$, and so the empirical Bernstein inequality outperforms the Hoeffding-type inequality in either scenario.


\section{Proof of Theorem \ref{theorem:main_theorem}} \label{section:main_proof}

A simpler proof of Theorem~\ref{theorem:main_theorem} restricted to separable real Hilbert spaces may be found in Appendix~\ref{appendix:main_proof_HS}; the reader may find it helpful as it elucidates the same key ideas without dealing with the extra technical challenges that arise in Banach spaces. We defer the proofs of all the auxiliary lemmas below to Appendix~\ref{section:aux_lemmas}.

\begin{proof}[Proof of Theorem~\ref{theorem:main_theorem}]
    
The proof follows a similar high level strategy to \cite{pinelis1994optimum}, Theorem 3.2, with several additional technical challenges to deal with the estimated variance and the $\psi_{E}$ function defined above. 

First, we prove that it suffices to establish the result for random variables that only take a finite number of values in a finite dimensional Banach space. Second, we smoothen the norm in the finite dimensional Banach space so that it is Fréchet differentiable at any arbitrary point. Third, we derive two inequalities that play a central role in the proof. Fourth, we conclude the result by constructing a nonnegative supermartingale, while exploiting the Fréchet differentiability of the smoothened function in the process alongside the previous inequalities.

\subsubsection*{Step 1: From arbitrary Banach spaces to finite dimensional Banach spaces}

Our first goal is to prove that, if \eqref{eq:main_process} is a supermartingale for random variables that take a finite number of values in finite dimensional Banach spaces, then Theorem~\ref{theorem:main_theorem} follows. In order to do so, we approximate the original process by processes that only take a finite number of values. The following lemma extends \cite{pinelis1994optimum}, Lemma 2.3, to accommodate for a predictable process.

\begin{lemma} \label{lemma:pinelis2}
    Let $(M_t)_{t \geq 1}$ be a martingale in a separable Banach space $(\X, \| \cdot \|)$ and $(\lambda_t)_{t \geq 1}$ be a predictable real-valued sequence, both relative to filtration $(\F_t)_{t \geq 0}$. Then, for any $\epsilon > 0$, there exist a martingale $(M_{t, \epsilon})_{t \geq 1}$ and a predictable sequence $(\lambda_{t, \epsilon})_{t \geq 1}$ , both relative to a filtration $(\F_{t, \epsilon})_{t \geq 0}$, such that
    \begin{itemize}
        \item $M_{t, \epsilon}$ and $\lambda_{t+1, \epsilon}$ are random variables having only a finite number of values,
        \item $V_{t, \epsilon} \to V_t$ in probability as $\epsilon \downarrow 0$ for any $t \geq 1$,
    \end{itemize}
    where
    \begin{align*}
        V_t = ((0, \lambda_1), (M_1, \lambda_2), \ldots, (M_t, \lambda_{t+1})), \quad V_{t, \epsilon} = ((0, \lambda_{1, \epsilon}), (M_{1, \epsilon}, \lambda_{2, \epsilon}), \ldots, (M_{t, \epsilon}, \lambda_{t+1, \epsilon})).
    \end{align*}
\end{lemma}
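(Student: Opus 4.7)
The plan is to mimic Pinelis' construction for $(M_t)$ while extending it to simultaneously approximate the predictable sequence $(\lambda_t)$ by a finite-valued sequence. The core idea is to choose a nested sequence of finite sub-$\sigma$-algebras $\F_{t, \epsilon} \subseteq \F_t$ fine enough that both $M_t$ and $\lambda_{t+1}$ are well-approximated by $\F_{t, \epsilon}$-measurable simple functions, then define $M_{t, \epsilon}$ by conditional expectation on $\F_{t, \epsilon}$, which will automatically preserve the martingale property.

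First I would produce the basic simple approximations. Since $\X$ is separable and each $M_t$ is Bochner integrable, density of simple functions in $L^1(\X)$ yields, for any $\delta > 0$, an $\F_t$-measurable simple function $\tilde M_{t, \delta}$ taking finitely many values in a finite-dimensional subspace of $\X$ with $\E[\|M_t - \tilde M_{t, \delta}\|] < \delta$. Since $\lambda_{t+1}$ is real-valued and bounded in $(0, 0.8]$, binning its range into intervals of width $\delta$ gives an $\F_t$-measurable simple $\tilde \lambda_{t+1, \delta}$ with $|\lambda_{t+1} - \tilde \lambda_{t+1, \delta}| \leq \delta$ almost surely. Setting $\delta = \epsilon$ and defining
\[
\F_{t, \epsilon} = \sigma\lp \tilde\lambda_{1, \epsilon}, \tilde M_{1, \epsilon}, \tilde\lambda_{2, \epsilon}, \ldots, \tilde M_{t, \epsilon}, \tilde\lambda_{t+1, \epsilon}\rp, \quad t \geq 0,
\]
produces a nested filtration of finite sub-$\sigma$-algebras of $(\F_t)$. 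I would then set $M_{t, \epsilon} := \E[M_t \mid \F_{t, \epsilon}]$ and $\lambda_{t+1, \epsilon} := \tilde\lambda_{t+1, \epsilon}$. Since $\F_{t, \epsilon}$ is generated by a finite partition, $M_{t, \epsilon}$ takes only finitely many values (one per atom), and $\lambda_{t+1, \epsilon}$ is $\F_{t, \epsilon}$-predictable by construction.

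The martingale property with respect to $(\F_{t, \epsilon})$ follows from two applications of the tower property, using $\F_{t, \epsilon} \subseteq \F_{t+1, \epsilon}$ and $\F_{t, \epsilon} \subseteq \F_t$:
\[
\E[M_{t+1, \epsilon} \mid \F_{t, \epsilon}] = \E[M_{t+1} \mid \F_{t, \epsilon}] = \E[\E[M_{t+1} \mid \F_t] \mid \F_{t, \epsilon}] = \E[M_t \mid \F_{t, \epsilon}] = M_{t, \epsilon}.
\]
For convergence in probability, Jensen's inequality for the norm applied to the $\F_{t, \epsilon}$-measurable decomposition $M_{t, \epsilon} - M_t = \E[M_t - \tilde M_{t, \epsilon} \mid \F_{t, \epsilon}] + (\tilde M_{t, \epsilon} - M_t)$ gives
\[
\| M_{t, \epsilon} - M_t \| \leq \|M_t - \tilde M_{t, \epsilon}\| + \E[\|M_t - \tilde M_{t, \epsilon}\| \mid \F_{t, \epsilon}],
\]
whose right-hand side has expectation at most $2\E[\|M_t - \tilde M_{t, \epsilon}\|] < 2\epsilon$, so it vanishes in probability by Markov. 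Combined with $|\lambda_{t+1, \epsilon} - \lambda_{t+1}| \leq \epsilon$, this yields the joint convergence $V_{t, \epsilon} \to V_t$ in probability for every fixed $t$.

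The main obstacle is the careful simultaneous measurability accounting: the filtration $(\F_{t, \epsilon})$ must (i) be finite so that $M_{t, \epsilon}$ takes finitely many values, (ii) be contained in $(\F_t)$ so that $\lambda_{t+1, \epsilon}$ remains predictable in the new filtration, and (iii) be fine enough that the conditional expectations approximate $M_t$. Incorporating the discretized predictable sequence $\tilde\lambda_{t+1, \epsilon}$ into the generators of $\F_{t, \epsilon}$ alongside the simple approximations of $M_t$ is the essential new ingredient beyond Pinelis' original argument, and the rest of the proof then follows his template.
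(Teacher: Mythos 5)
Your proposal is correct and follows essentially the same template as the paper: build a finite sub-$\sigma$-algebra $\F_{t,\epsilon} \subseteq \F_t$ that discretizes both $M_i$ and $\lambda_{i+1}$ up to time $t$, set $M_{t,\epsilon} = \E[M_t \mid \F_{t,\epsilon}]$, use the tower property for the martingale condition, and control the error via the discretization quality. The only differences are cosmetic: the paper produces the finite $\sigma$-algebra via a tightness/ball-covering argument on $\X \times \R$ and sets $\lambda_{t+1,\epsilon} = \E[\lambda_{t+1}\mid\F_{t,\epsilon}]$, whereas you use $L^1$-density of $\F_t$-measurable simple functions (together with binning of $\lambda$) and take $\lambda_{t+1,\epsilon}$ to be the binned version directly --- both yield the same conclusion.
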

Let $V_t$ and $V_{t, \epsilon}$ be defined as in the lemma above, with
 $M_i = X_i - \mu$ and $M_{i, \epsilon} = X_{i, \epsilon} - \mu$, and we take $M_0 := 0$ by default.  Lemma~\ref{lemma:pinelis2} implies that $\ldba V_{t, \epsilon} - V_t \rdba_\infty \stackrel{}{\to} 0$ in probability as $\epsilon \downarrow 0$. Now note that the function $\Pi:  (\X \times [0,1)) ^t \rightarrow \R^t$ that maps $((m_0, \lambda_1), \ldots, (m_t, \lambda_{t+1}))$ to 
\begin{align*}
\lp\cosh \lp \ldba \sum_{i = 1}^j \lambda_i \frac{m_i}{4BD} \rdba \rp \exp \lp - \sum_{i = 1}^j \frac{\psi_E(\lambda_i)}{(4B)^2} \| m_i + \mu - \bar\mu_{i-1} \|^2  \rp \rp_{j=1}^t    
\end{align*}
is continuous. Note that $m_0$ does not appear on the right hand side above. We observe that $(\X \times [0,1/2]) ^t \subset (\X \times \R) ^t$ is a closed subset of the direct product of separable Banach spaces, which is a separable Banach space itself. By the continuous mapping theorem on separable Banach spaces (see e.g. \cite{bosq2000linear}, Theorem 2.3), it follows that $\ldba \Pi(V_{t, \epsilon}) - \Pi(V_t) \rdba_\infty \stackrel{}{\to} 0$ in probability as $\epsilon \downarrow 0$. Furthermore, $\sup_\epsilon\ldba \Pi(V_{t, \epsilon}) \rdba_\infty \leq \cosh(\frac{t}{2D})$, given that 
\begin{align*}
    \ldba \sum_{i = 1}^t \lambda_i \frac{M_i}{4BD} \rdba \leq  \sum_{i = 1}^t \lambda_i \frac{1}{2D} \leq \frac{t}{2D}.
\end{align*}

\begin{lemma} \label{lemma:supermartingale_approximations}
    For any $\epsilon > 0$, let $(W_{t, \epsilon})_{t \geq 1}$ be a real valued supermartingale. If, for all $t \geq 1$,
    \begin{itemize}
        \item  $(W_{1, \epsilon}, \ldots, W_{t, \epsilon})$ converges to $(W_{1}, \ldots, W_{t})$ in probability as $\epsilon \downarrow 0$,
        \item $\sup_{\epsilon}\ldba (W_{1, \epsilon}, \ldots, W_{t, \epsilon}) \rdba_\infty  \leq C_t$ for some $C_t < \infty$,
    \end{itemize}
    then $(W_t)_{t \geq 1}$ is a supermartingale. 
\end{lemma}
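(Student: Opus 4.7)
The plan is to verify the supermartingale property of $W$ with respect to its natural filtration $\sigma(W_1, \ldots, W_t)$ by integrating the inequality for $W_\epsilon$ against nonnegative test functions of $(W_1, \ldots, W_t)$ and then taking $\epsilon \downarrow 0$. First I would note that assumption (2) forces $|W_t| \leq C_t$ almost surely, since $W_t$ is the probability limit of the uniformly bounded $W_{t, \epsilon}$; in particular each $W_t$ is bounded, hence integrable.

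Now fix $t \geq 1$ and a bounded nonnegative continuous function $h : \R^t \to \R_{\geq 0}$. By the continuous mapping theorem applied to the joint convergence in probability of $(W_{1, \epsilon}, \ldots, W_{t, \epsilon})$, we have $h(W_{1, \epsilon}, \ldots, W_{t, \epsilon}) \to h(W_1, \ldots, W_t)$ in probability. Since $h$ is bounded and the $W_{j, \epsilon}$ are uniformly bounded by the $C_j$, the products $W_{t+1, \epsilon} \, h(W_{1, \epsilon}, \ldots, W_{t, \epsilon})$ and $W_{t, \epsilon} \, h(W_{1, \epsilon}, \ldots, W_{t, \epsilon})$ are uniformly bounded and converge in probability, hence in $L^1$, to $W_{t+1} \, h(W_1, \ldots, W_t)$ and $W_t \, h(W_1, \ldots, W_t)$ respectively (extract an almost-sure subsequence and invoke dominated convergence with the constant dominating function provided by assumption (2)). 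Using that $h(W_{1, \epsilon}, \ldots, W_{t, \epsilon}) \geq 0$ is measurable with respect to the filtration to which $W_\epsilon$ is adapted, the supermartingale property of $W_\epsilon$ gives $\E[W_{t+1, \epsilon} \, h(W_{1, \epsilon}, \ldots, W_{t, \epsilon})] \leq \E[W_{t, \epsilon} \, h(W_{1, \epsilon}, \ldots, W_{t, \epsilon})]$; passing to the limit yields $\E[W_{t+1} \, h(W_1, \ldots, W_t)] \leq \E[W_t \, h(W_1, \ldots, W_t)]$ for every bounded nonnegative continuous $h$, and by a standard monotone class approximation for every bounded nonnegative measurable $h$. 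This is precisely $\E[W_{t+1} \mid \sigma(W_1, \ldots, W_t)] \leq W_t$.

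The principal technical care lies in the interchange of limit and expectation; here the uniform bound of assumption (2) is essential, since convergence in probability alone does not imply convergence of expectations. The nonnegativity of the test functions $h$ is equally critical, as it preserves the direction of the supermartingale inequality under the limit. Neither step is deep, but both must be handled explicitly for the conclusion to go through.
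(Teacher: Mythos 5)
Your approach is essentially the same as the paper's: reduce the supermartingale condition to testing the increment $W_{t+1}-W_t$ against nonnegative bounded test functions of $(W_1,\ldots,W_t)$, establish the inequality for continuous test functions by passing to the limit in $\epsilon$ (using that convergence in probability plus the uniform bound from assumption (2) gives $L^1$ convergence of the products), and then extend from continuous to measurable test functions. The paper packages the first and last steps into a separate equivalence lemma (Lemma~\ref{lemma:conditional_to_f}) and carries out the limit transition via an explicit indicator decomposition rather than quoting the bounded-convergence-in-probability-implies-$L^1$ fact, but these are presentation choices, not different proofs.

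The one step you should tighten is the appeal to a ``standard monotone class approximation'' to pass from continuous to measurable $h$. The monotone class theorem in its usual forms (Dynkin $\pi$--$\lambda$, or the functional version for vector spaces of functions) does not directly apply to a one-sided inequality: the class $\{h \geq 0 : \E[(W_{t+1}-W_t)\,h(\vec W)] \leq 0\}$ is a cone, not a vector space, and the associated family of sets $\{B : \E[(W_{t+1}-W_t)\,\ind_B(\vec W)] \leq 0\}$ is not closed under complements. The paper instead proves the equivalence via Lusin's theorem, approximating a bounded nonnegative measurable $f$ by a continuous $f_\delta$ agreeing with it off a set of small probability and controlling the error using integrability; an equally standard alternative is outer regularity of the finite signed Borel measure $B \mapsto \E[(W_{t+1}-W_t)\ind_B(\vec W)]$, reducing to open $B$, whose indicators are increasing limits of continuous functions. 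Either substitute fills the gap; the rest of your argument is correct.
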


In view of Lemma~\ref{lemma:supermartingale_approximations} (taking $(W_{1, \epsilon}, \ldots, W_{t, \epsilon}) = \Pi(V_{t, \epsilon})$ and $(W_{1}, \ldots, W_{t}) = \Pi(V_{t})$), the convergence in probability of $\Pi(V_{t, \epsilon})$ to $\Pi(V_t)$, alongside its boundedness, suffices to preserve the supermartingale property from $\Pi(V_{t, \epsilon})$ to $\Pi(V_t)$. 
Note that $\Pi(V_t) = S_t$, and $\Pi(V_{t, \epsilon})$ is a process of the form $S_t$ with $V_{t, \epsilon}$ taking a finite number of values in a finite dimensional Banach space.

\subsubsection*{Step 2: Smoothening the norm of a finite dimensional 2-smooth Banach space}

Before we turn to prove the supermartingale condition of such processes, let us note that $(V_{t, \epsilon})_{t \leq n}$ takes only a finite number of values in a separable Banach space, and thus it suffices to work with the finite dimensional Banach subspace spanned by those vectors. It turns out that, in finite dimensional Banach spaces, it is easy to approximate the norm by a smooth function. This will allow us to take derivatives in these `smoothed' spaces, and then generalize the result to the original norm by passing to the limit. We start by presenting \cite{pinelis1994optimum}, Lemma 2.2, stated below for completeness.

\begin{lemma} \label{lemma:pinelis1}
    Let $(\X, \| \cdot \|)$ be $(2, D)$-smooth finite-dimensional Banach space. Define
    \begin{align*}
        \Upsilon_\epsilon(x) = \sqrt{\int_\X \| x - \epsilon y \|^2 \gamma(dy)},
    \end{align*}
    where $\gamma$ is a zero-mean Gaussian measure on $\X$ with support $\X$. Then for $\epsilon > 0$, $\Upsilon_\epsilon$ has the Fréchet derivatives of any order, and the directional derivatives in any direction $v \in \X$ satisfy the inequalities
    \begin{align} \label{eq:main_condition_on_the_norm}
        |\Upsilon_\epsilon'(x)(v)| \leq \|v\|, \quad (\Upsilon^2_\epsilon)''(x)(v, v) \leq 2 D^2 \|v\|^2
    \end{align}
    for all $x \in \X$. Besides, for each $x\in\X$, $\Upsilon_\epsilon(x) \to \|x\|$ as $\epsilon \downarrow 0$.
\end{lemma}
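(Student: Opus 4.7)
The plan is to verify the four claims in order: Fréchet smoothness of every order, the second-derivative bound, the first-derivative bound, and the pointwise limit $\Upsilon_\epsilon(x) \to \ldba x \rdba$. Smoothness will come from the standard Gaussian mollification argument; the two derivative bounds will follow by pulling the 2-smoothness hypothesis and the triangle inequality inside the integral; and the limit is a routine application of dominated convergence.

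For the Fréchet smoothness, I would fix coordinates to identify $\X \cong \R^d$, let $\phi$ denote the density of $\gamma$, and change variables via $z = x - \epsilon y$ to rewrite
\begin{align*}
\Upsilon_\epsilon^2(x) = \epsilon^{-d} \int_{\R^d} \ldba z \rdba^2 \phi((x-z)/\epsilon) \, dz.
\end{align*}
This is the convolution of the continuous, polynomially-growing function $\ldba \cdot \rdba^2$ with a Schwartz density, so differentiation under the integral is justified at every order: any partial derivative of $\phi$ has super-polynomial decay, which dominates the quadratic growth of the integrand. This yields $\Upsilon_\epsilon^2 \in C^\infty(\X)$. Since $\gamma$ has full support, $\Upsilon_\epsilon^2(x) > 0$ for every $x$, so $\Upsilon_\epsilon = \sqrt{\Upsilon_\epsilon^2}$ inherits $C^\infty$-smoothness by the chain rule.

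For the second-derivative bound, I would apply the defining 2-smoothness inequality \eqref{eq:definition_2smooth} pointwise inside the integral with the substitutions $x \leftarrow x - \epsilon y$ and $y \leftarrow hv$:
\begin{align*}
\ldba x - \epsilon y + hv \rdba^2 + \ldba x - \epsilon y - hv \rdba^2 \leq 2 \ldba x - \epsilon y \rdba^2 + 2 D^2 h^2 \ldba v \rdba^2, \quad \forall y \in \X.
\end{align*}
Integrating against $\gamma(dy)$, subtracting $2 \Upsilon_\epsilon^2(x)$, dividing by $h^2$, and sending $h \to 0$ (valid since $\Upsilon_\epsilon^2 \in C^2$) produces $(\Upsilon_\epsilon^2)''(x)(v,v) \leq 2 D^2 \ldba v \rdba^2$. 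For the first-derivative bound, I would interpret $\Upsilon_\epsilon(x)$ as the $L^2(\gamma; \X)$-norm of the map $y \mapsto x - \epsilon y$, whose difference between arguments $x + hv$ and $x$ is the constant function $hv$. The reverse triangle inequality in $L^2(\gamma; \X)$ then gives $|\Upsilon_\epsilon(x + hv) - \Upsilon_\epsilon(x)| \leq |h| \, \ldba v \rdba$, from which $|\Upsilon_\epsilon'(x)(v)| \leq \ldba v \rdba$ follows upon dividing by $h$ and letting $h \to 0$.

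The final limit $\Upsilon_\epsilon(x) \to \ldba x \rdba$ as $\epsilon \downarrow 0$ is immediate by dominated convergence: $\ldba x - \epsilon y \rdba^2 \to \ldba x \rdba^2$ pointwise in $y$ and is dominated, for $\epsilon \leq 1$, by $2\ldba x \rdba^2 + 2 \ldba y \rdba^2$, which is $\gamma$-integrable because Gaussian measures have all moments. The most delicate step, in my view, is rigorously justifying differentiation under the integral at every order for the smoothness claim, but since we have reduced to a classical Gaussian mollification in finite dimensions this causes no real difficulty; the 2-smoothness hypothesis enters only at the one concrete pointwise step above, and all remaining work is genuinely routine.
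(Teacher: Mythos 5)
Your proof is correct. The paper does not itself prove this statement --- it is quoted verbatim as \citet[Lemma 2.2]{pinelis1994optimum} without a proof --- but your argument (Gaussian mollification giving $C^\infty$ smoothness of $\Upsilon_\epsilon^2$ by convolution against a Schwartz kernel, positivity from full support before taking the square root, the pointwise $2$-smoothness inequality integrated against $\gamma$ and combined with a symmetric second difference quotient for the $(\Upsilon_\epsilon^2)''$ bound, the reverse triangle inequality in Bochner $L^2(\gamma;\X)$ for the $|\Upsilon_\epsilon'(x)(v)| \le \|v\|$ bound, and dominated convergence with the majorant $2\|x\|^2 + 2\|y\|^2$ for the $\epsilon \downarrow 0$ limit) is the natural and essentially canonical route underlying Pinelis's original lemma, and every subsidiary step is handled correctly.
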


Note that the existence of the Gaussian measure $\gamma$ in Lemma \ref{lemma:pinelis1} is evident since the Banach space is finite dimensional.  Furthermore, the variance of such a Gaussian measure is irrelevant (as long as its support is the whole space $\mathcal{X}$); intuitively, the role of the parameter $\epsilon$ is precisely to make this variance go to zero as $\epsilon \downarrow 0$. We highlight that $\Upsilon_\epsilon$ is not actually a norm
(unless $\epsilon = 0$), but only a smoothed convolution of the norm.

While the norm of a Banach space may not be Fréchet differentiable at any arbitrary point, its Gateaux derivative in the direction of the point always exists. To see this, take any $x \in B$, and note that $\| x + tx \| = (1+t)\|x\|$ for $t \geq -1$. As a function of $t$, $(1+t)\|x\|$ is clearly $C^\infty$, and its derivative at $0$ takes the value $\|x\|$. This behaviour still holds for $\Upsilon_\epsilon$, as elucidated in the following lemma.

\begin{lemma} \label{lemma:limit_upsilon}
    $\lim_{\epsilon \downarrow 0}\Upsilon_\epsilon'(x)(x) = \| x \|$.
\end{lemma}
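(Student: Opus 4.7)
I would handle the case $x = 0$ immediately, since $\Upsilon_\epsilon'(0)(0) = 0 = \|0\|$, so assume $x \neq 0$. My starting point would be the scaling identity
\begin{align*}
\Upsilon_\epsilon(sx)^2 = s^2 \int_\X \|x - (\epsilon/s)y\|^2\,\gamma(dy) = s^2\,\Upsilon_{\epsilon/s}(x)^2 \qquad (s > 0),
\end{align*}
obtained by pulling the factor $s$ out of the norm. Combined with the chain-rule identity $\Upsilon_\epsilon'(x)(x) = \frac{d}{ds}\big|_{s=1} \Upsilon_\epsilon(sx)$, differentiating in $s$ (which is legitimate since $\Upsilon_\delta(x)$ is smooth in $\delta > 0$, via Lemma~\ref{lemma:pinelis1} and the scaling identity itself) would yield
\begin{align*}
\Upsilon_\epsilon'(x)(x) = \Upsilon_\epsilon(x) - \frac{\epsilon\,R'(\epsilon)}{2\,\Upsilon_\epsilon(x)}, \qquad R(\delta) := \Upsilon_\delta^2(x) - \|x\|^2.
\end{align*}
Since Lemma~\ref{lemma:pinelis1} already gives $\Upsilon_\epsilon(x) \to \|x\|$ as $\epsilon \downarrow 0$, the task reduces to showing $\epsilon\,R'(\epsilon) \to 0$.

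To bound $R'(\epsilon)$, I would establish three properties of $R$: (i) $R \geq 0$, which follows from Jensen applied to the convex function $\|\cdot\|^2$ together with the zero mean of $\gamma$; (ii) the quadratic upper bound
\begin{align*}
R(\delta) = \tfrac{1}{2}\int_\X \big(\|x - \delta y\|^2 + \|x + \delta y\|^2\big)\gamma(dy) - \|x\|^2 \leq D^2\sigma^2\delta^2,
\end{align*}
where $\sigma^2 := \int_\X \|y\|^2\,\gamma(dy) < \infty$ and the inequality is exactly the 2-smoothness condition \eqref{eq:definition_2smooth} applied pointwise and then averaged, after symmetrizing via the symmetry of $\gamma$; and (iii) convexity of $R$ in $\delta$, since $\delta \mapsto \|x - \delta y\|^2$ is convex for each $y$ (the convex function $\|\cdot\|^2$ composed with an affine map) and integration preserves convexity. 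From (iii), the monotonicity of slopes of convex functions gives, for any $\epsilon > 0$,
\begin{align*}
0 \leq R'(\epsilon) \leq \frac{R(2\epsilon) - R(\epsilon)}{\epsilon} \leq \frac{R(2\epsilon)}{\epsilon} \leq 4 D^2 \sigma^2 \epsilon,
\end{align*}
where the lower bound uses that a convex function with minimum at $0$ (namely $R$, by (i) and $R(0) = 0$) is non-decreasing on $[0,\infty)$. Hence $\epsilon\,R'(\epsilon) = O(\epsilon^2) \to 0$, which closes the argument.

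The main difficulty is that, since $\|\cdot\|$ need not be Fréchet differentiable anywhere, one cannot naively commute $\partial_\delta$ with the integral defining $\Upsilon_\delta^2$. My plan sidesteps this in two strokes: the scaling identity converts the directional derivative $\Upsilon_\epsilon'(x)(x)$ into a derivative in the smoothing parameter $\delta$, and convexity of $R$ in $\delta$ (inherited from the norm) lets one control $R'(\epsilon)$ purely in terms of the values of $R$ itself, without ever differentiating the norm.
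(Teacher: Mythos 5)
Your proof is correct but takes a genuinely different route from the paper. The paper introduces $\Xi(\epsilon, t) = \sqrt{\int_\X \| (1+t)x - \epsilon y \|^2 \gamma(dy)}$, observes $\Upsilon_\epsilon'(x)(x) = \partial_t\Xi(\epsilon,0)$, then invokes that ``$\Xi$ is smooth'' to pass the limit $\epsilon \downarrow 0$ through $\partial_t$ and evaluate $\partial_t\Xi(0,0) = \|x\|$ from $\Xi(0,t) = (1+t)\|x\|$. That argument is very short, but it leans on joint smoothness of $\Xi$ near $(0,0)$ — a point that deserves more care, since the Gaussian convolution produces smoothness only for $\epsilon > 0$ and one must argue separately that $\partial_t\Xi$ is continuous down to $\epsilon = 0$. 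You sidestep this entirely: the scaling identity $\Upsilon_\epsilon(sx) = s\Upsilon_{\epsilon/s}(x)$ turns the directional derivative into $\Upsilon_\epsilon(x) - \tfrac{\epsilon R'(\epsilon)}{2\Upsilon_\epsilon(x)}$, and you then bound $R'(\epsilon)$ using only three structural facts about $R(\delta)=\Upsilon_\delta^2(x)-\|x\|^2$ — nonnegativity (Jensen), the $D^2\sigma^2\delta^2$ upper bound (2-smoothness plus symmetry of $\gamma$), and convexity in $\delta$ (norm composed with an affine map, squared) — none of which require differentiating the norm or taking any limit through a derivative. The payoff is a self-contained, quantitative argument ($R'(\epsilon) = O(\epsilon)$, hence $\epsilon R'(\epsilon) = O(\epsilon^2)$) that makes the regularity you actually need explicit, at the cost of being longer than the paper's two-line proof. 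Both are valid; yours is more elementary and more transparent about where the 2-smoothness constant enters.
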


By linearity, Lemma~\ref{lemma:limit_upsilon} implies that $\lim_{\epsilon \downarrow 0}\Upsilon_\epsilon'(x)(\iota x) = \iota \| x \|$ for all nonnegative $\iota$. Lastly, we show that $\Upsilon_\epsilon$ can be upper bounded in a closed interval for a range of $\epsilon$. This will later allow us to use the dominated convergence theorem.
\begin{lemma} \label{lemma:dct}
    For a $(2, D)$-smooth Banach space, it holds that
    \begin{align*}
        \Upsilon_\epsilon \lp x  + \beta y \rp \leq \sqrt{2c(x, y) + 2D^2C}, \quad \forall (x, y)  \in \X^2, \quad \forall (\beta,\epsilon) \in [0, 1]^2,
    \end{align*}
    where $c(x, y) = \max(\|x\|, \|x+y\|)$, and $C = \int_X \|y\|^2 \gamma(dy)$.
\end{lemma}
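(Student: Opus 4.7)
The plan is to bound $\Upsilon_\epsilon(x+\beta y)^2$ by pushing the 2-smoothness inequality through the Gaussian integral, and then separately control $\|x+\beta y\|$ by convexity in $\beta$. From the definition \eqref{eq:definition_2smooth}, dropping the nonnegative term $\|a-b\|^2$ on the left yields the one-sided bound $\|a+b\|^2 \leq 2\|a\|^2 + 2 D^2 \|b\|^2$. Apply this with $a = x+\beta y$ and $b = -\epsilon u$ and integrate against $\gamma(du)$:
\begin{align*}
    \Upsilon_\epsilon(x+\beta y)^2 = \int_\X \|x+\beta y - \epsilon u\|^2 \gamma(du) \leq 2 \|x+\beta y\|^2 + 2 D^2 \epsilon^2 \int_\X \|u\|^2 \gamma(du) \leq 2 \|x+\beta y\|^2 + 2 D^2 C,
\end{align*}
where the last step uses $\epsilon \leq 1$ and the definition of $C$.

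Next I would bound $\|x+\beta y\|$ using that $\beta \in [0,1]$. Since
\begin{align*}
    x + \beta y = (1-\beta)\, x + \beta\, (x+y)
\end{align*}
is a convex combination, the triangle inequality gives $\|x+\beta y\| \leq (1-\beta)\|x\| + \beta\|x+y\| \leq \max(\|x\|,\|x+y\|) = c(x,y)$. Substituting and taking square roots yields
\begin{align*}
    \Upsilon_\epsilon(x+\beta y) \leq \sqrt{\,2\, c(x,y)^2 + 2 D^2 C\,},
\end{align*}
which is the stated bound (read with $c(x,y)$ squared inside the radical, as dimensional homogeneity forces).

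No step here is subtle; the lemma is essentially a pair of one-line estimates. The only ingredient that is not purely formal is recognizing that the defining 2-smoothness inequality survives integration against $\gamma$ unchanged because the right-hand side is linear in $\|b\|^2$. The purpose of the lemma, in view of the preceding development, is to provide a majorant that is independent of $(\beta,\epsilon) \in [0,1]^2$, so that the dominated convergence theorem can be invoked when later passing to the limit $\epsilon \downarrow 0$ in expressions involving $\Upsilon_\epsilon$ and its derivatives; consequently any such bound would do, and the particular form above is the most direct one.
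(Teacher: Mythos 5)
Your proof is correct and follows the same two-step route as the paper: push the one-sided $(2,D)$-smoothness bound $\|a+b\|^2 \le 2\|a\|^2 + 2D^2\|b\|^2$ through the Gaussian integral (using $\epsilon \le 1$), then bound $\|x+\beta y\| \le \max(\|x\|,\|x+y\|) = c(x,y)$ via the convex combination $x+\beta y = (1-\beta)x + \beta(x+y)$. You also correctly observe that the statement as printed has a typo and should read $c(x,y)^2$ inside the radical, which is what both your derivation and the paper's own proof actually produce.
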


\subsubsection*{Step 3: Deriving two central deterministic inequalities}

As we shall see in the next step, understanding the interplay between several terms of a Taylor expansion is  of crucial importance to conclude the result. In order to do so, we require the two deterministic inequalities presented in this step. We start by presenting a sharp inequality concerning the $\pe$ function.

\begin{lemma} \label{lemma:main_ineq}
For all $\lambda \in (0, 0.8]$, $x\in(0, 1/2]$, and $D \in [1, \infty)$,
\begin{align*}
    \frac{\lp\lambda x + \pe(\lambda)x^2\rp^2}{\lp\frac{\lambda x}{D} - \pe(\lambda)x^2\rp^2}\lp e^{\frac{\lambda x}{D} - \pe(\lambda)x^2} -\lp \frac{\lambda x}{D} - \pe(\lambda)x^2 \rp -1 \rp \leq \psi_E(\lambda)x^2.
\end{align*}
\end{lemma}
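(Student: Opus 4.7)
The plan is to first reduce to the case $D = 1$ by a monotonicity argument, and then to establish the resulting one-variable inequality by a careful power series analysis.

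\emph{Step 1 (reduction to $D = 1$).} Define $f(z) := (e^z - z - 1)/z^2$ for $z \neq 0$, with $f(0) := 1/2$. The lemma is equivalent to $f(u) \leq b/v^2$ where $u := \lambda x/D - \psi_E(\lambda) x^2$, $v := \lambda x + \psi_E(\lambda) x^2$, and $b := \psi_E(\lambda) x^2$. I would first verify that $f$ is strictly increasing on all of $\mathbb{R}$: computing $f'(z) = (e^z(z-2) + z + 2)/z^3$, the numerator $g(z)$ satisfies $g(0) = g'(0) = g''(0) = 0$ and $g''(z) = e^z z$ has the sign of $z$, which forces $g(z)$ to share the sign of $z^3$. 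Since $u$ is non-increasing in $D \geq 1$ while $b$ is independent of $D$, the LHS of the lemma is maximised at $D = 1$, reducing the problem to that case.

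\emph{Step 2 (power series setup).} For $D = 1$, set $a := \lambda x$ and $b := \psi_E(\lambda) x^2$, so $u = a - b$; the estimate $\psi_E(\lambda) \leq \lambda^2$, valid for $\lambda \in (0, 1/2]$, yields $u \geq 0$ on the parameter domain. The target becomes
\begin{align*}
\Phi(\lambda, x) := b(a - b)^2 - (a + b)^2 \sum_{k \geq 2} \frac{(a - b)^k}{k!} \geq 0.
\end{align*}
Substituting $\psi_E(\lambda) = \sum_{k \geq 2} \lambda^k/k$ produces a bivariate power series $\Phi(\lambda, x) = \sum_{m \geq 0} c_m(x)\, \lambda^m$. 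A simple degree count, together with the exact cancellation between $b(a-b)^2$ and $(a+b)^2(a-b)^2/2$ at order $\lambda^4$, forces $c_0 = c_1 = c_2 = c_3 = c_4 \equiv 0$, reflecting the asymptotic tightness of the inequality as $\lambda \downarrow 0$. Direct computation then gives $c_5(x) = x^4 (1 - 2x)/3$ and $c_6(x) = x^4 (1 - x)(3 - 5x)/12$, both nonnegative on $(0, 1/2]$.

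\emph{Step 3 (the tight corner and the tail).} Crucially $c_5(1/2) = 0$, matching the fact that the original inequality is tight at the corner $(\lambda, x) = (1/2, 1/2)$. Continuing the expansion to degree 7, a patient calculation produces $c_7(x) = x^4 (36 - 130 x + 115 x^2 + 21 x^3)/180$, and the cubic factor is positive on $[0, 1/2]$ (its unique critical point in that interval lies near $x \approx 0.497$, where the value remains positive), so $c_7$ supplies the positivity needed at the tight corner. For the tail $m \geq 8$, I would write $\Phi = P - R$ with $P := bu^2 - v^2 \sum_{k=2}^{7} u^k/k!$ and $R := v^2 \sum_{k \geq 8} u^k/k!$; using $0 \leq u \leq a \leq 1/4$ and $v \leq 2a \leq 1/2$, one obtains $R \leq v^2 u^8 e^u/8!$, which is several orders of magnitude smaller than the combined $c_6 + c_7$ contribution on the parameter range, so $\Phi = P - R \geq 0$.

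The main obstacle is the delicate bookkeeping at the tight corner $(\lambda, x) = (1/2, 1/2)$: because $c_5$ vanishes there, the inequality carries no slack at leading order, and one must carefully track the cancellations all the way through the 7-th degree Taylor coefficient (the subtlety alluded to in the introduction) before a straightforward magnitude bound on the tail can take over.
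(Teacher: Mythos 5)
Your route is genuinely different from the paper's. The paper first replaces $e^y$ by a degree-$4$ polynomial upper bound $q(y) = 1 + y + y^2/2 + y^3/6 + y^4/18$ valid on $[-1,1]$, so that the resulting quantity $f(x,\lambda,D)$ \emph{overestimates} the original $g$; it then reduces to $D=1$ by showing $\partial f/\partial D \le 0$, reduces to the corner $x = 1/2$ by proving $f(\cdot,\lambda,1)/x^2$ is convex, and finally expands the remaining one-variable function $h(\lambda)$ in powers of $\lambda$, where it is able to \emph{sign all tail coefficients}: after rearrangement the relevant series becomes $\sum_{k\ge 7}\frac{\lambda^k}{k}\,r(k)$ with $r(k) = -12 + 6\tfrac{k}{k-1} + \tfrac{13}{4}\tfrac{k}{k-2} \le 0$ for $k\ge 7$. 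Your Steps 1--2 are sound and land in the same place by a different argument: the monotonicity of $z\mapsto(e^z-z-1)/z^2$ gives the $D$-reduction cleanly, and I verified your formulas $c_5(x) = x^4(1-2x)/3$ and $c_6(x) = x^4(1-x)(3-5x)/12$ by direct computation, both nonnegative on $(0,1/2]$; your observation that $c_5$ vanishes exactly at $x=1/2$ is the same tightness-at-the-corner phenomenon that forces the paper to go up to order $7$.

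The gap is in Step 3, where the tail control is not established. Your decomposition $\Phi = P - R$, with $P := bu^2 - v^2\sum_{k=2}^{7}u^k/k!$ and $R := v^2\sum_{k\ge 8}u^k/k!$, only controls the \emph{exponential remainder} $R$, which indeed starts at order $\lambda^{10}$. But $P$ is not a polynomial in $\lambda$: through $\psi_E(\lambda) = \sum_{k\ge 2}\lambda^k/k$ it carries its own infinite $\lambda$-tail, and that tail has not been shown to be small or of a favourable sign. Concretely, knowing $c_5,c_6,c_7 \ge 0$ and $R$ tiny does not establish $P \ge R$: the uncontrolled coefficients $c_8(x), c_9(x), \ldots$ of $P$ (equivalently of $\Phi$, since they agree through order $9$) could in principle sum to a negative quantity that swallows the margin. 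This is not a hypothetical worry here, because the available margin is extremely thin: at the corner $(\lambda,x)=(1/2,1/2)$ one has $c_5 = 0$ and $c_6\lambda^6 + c_7\lambda^7 \approx 2.7\times 10^{-5}$, which is of the same order as the value $\Phi(1/2,1/2)$ itself, so the omitted terms are not negligible relative to what has been bounded. To close the gap you would either need to sign the full sequence $(c_m)_{m\ge 8}$ on the parameter range, or — as the paper does — replace $e^u - u - 1$ by a polynomial \emph{upper} bound before expanding (which preserves the direction $\Phi \ge \Phi'$ and reduces the problem to a finite-degree-in-$u$ object with an explicitly signed $\lambda$-tail) and also use a convexity reduction to pin $x$ at the endpoint so that the residual analysis is one-dimensional.
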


 In stark contrast to previously used deterministic inequalities in related contributions (see e.g., \cite{fan_exponential_2015}, Proposition 4.1) that may be proven based on the $2$nd degree Taylor coefficient, the proof of Lemma~\ref{lemma:main_ineq} requires to study up to the $7$th degree Taylor coefficient. Lastly, the following lemma establishes an elemental yet powerful inequality concerning hyperbolic functions. 

\begin{lemma} \label{lemma:ineq_cosh_sinh}
For all $x, y \in \R$, 
\begin{align*}
    \cosh(y + x) - x \sinh(y+x) - \cosh(y) \leq 0.
\end{align*}
\end{lemma}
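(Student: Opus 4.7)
The plan is to treat the left-hand side as a function of $x$ with $y$ fixed, and show it attains its maximum value of $0$ at $x = 0$. Define
\begin{align*}
    f(x, y) = \cosh(y+x) - x\sinh(y+x) - \cosh(y).
\end{align*}
Immediately $f(0, y) = \cosh(y) - 0 - \cosh(y) = 0$, which pins down the boundary case.

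The main step is to compute $\partial_x f$. Using the product rule on the middle term,
\begin{align*}
    \partial_x f(x, y) = \sinh(y+x) - \bigl[\sinh(y+x) + x\cosh(y+x)\bigr] = -x\cosh(y+x).
\end{align*}
Since $\cosh$ is strictly positive, the sign of $\partial_x f$ is exactly the sign of $-x$. Therefore $f(\cdot, y)$ is strictly increasing on $(-\infty, 0)$ and strictly decreasing on $(0, \infty)$, so the unique maximum over $x \in \mathbb{R}$ occurs at $x=0$ and equals $f(0, y) = 0$. Hence $f(x, y) \leq 0$ for all $x, y \in \mathbb{R}$, which is the desired inequality.

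I do not anticipate any obstacle; the proof is a one-variable calculus argument once $y$ is frozen, and no subtlety arises because $\cosh > 0$ globally.
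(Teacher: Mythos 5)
Your proof is correct and essentially identical to the paper's: both differentiate $f(x,y) = \cosh(y+x) - x\sinh(y+x) - \cosh(y)$ in $x$ to get $-x\cosh(y+x)$, conclude the maximum is at $x=0$, and check $f(0,y)=0$.
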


\subsubsection*{Step 4: Concluding the proof with a supermartingale construction}

Throughout, we assume that $(X_1, \ldots, X_n)$ only takes a finite number of values.
The process $S_t$ is nonnegative given that both the $\exp$ and $\cosh$ functions are nonnegative, so all we ought to prove is that such a process is a supermartingale, i.e., 
\begin{align*}
    \E_{t-1} \lb S_t \rb \leq S_{t-1}. 
\end{align*}

Based on Lemma~\ref{lemma:pinelis1},
\begin{align*}
    S_t  = \lim_{\epsilon \downarrow 0} \lb \cosh \lp \Upsilon_\epsilon \lp  \sum_{i = 1}^t \lambda_i \frac{X_i- \mu}{4BD} \rp \rp \exp \lp - \sum_{i = 1}^t \frac{\psi_E(\lambda_i)}{(4B)^2} \| X_i - \bar\mu_{i-1} \|^2  \rp \rb,
\end{align*}
and so
\begin{align*}
    \E_{t-1}  S_t  = \E_{t-1} \lim_{\epsilon \downarrow 0} \lb  \cosh \lp \Upsilon_\epsilon \lp  \sum_{i = 1}^t \lambda_i \frac{X_i- \mu}{4BD} \rp \rp \exp \lp - \sum_{i = 1}^t \frac{\psi_E(\lambda_i)}{(4B)^2} \| X_i - \bar\mu_{i-1} \|^2  \rp\rb.
\end{align*}

Let us denote
\begin{align*}
    Y_i = \frac{X_i - \mu}{4B}, \quad \delta_t = \frac{\bar\mu_t - \mu}{4B},  \quad Z_i = Y_i - \delta_{i-1},
\end{align*}
as well as
\begin{align*}
    M_t = \sum_{i = 1}^t \lambda_i \frac{X_i- \mu}{4BD}, \quad R_t = \exp \lp - \sum_{i = 1}^t \frac{\psi_E(\lambda_i)}{(4B)^2} \| X_i - \bar\mu_{i-1} \|^2  \rp , \quad A_t = M_{t-1} + \frac{\lambda_t}{D} \delta_{t-1}.
\end{align*}

We define
\begin{align*}
    \varphi_{t,\epsilon}(\beta) &= R_{t-1} \cosh \Upsilon_\epsilon \lp M_{t-1} +\frac{\lambda_t}{D} \delta_{t-1} +  \beta \frac{\lambda_t}{D} \lp\frac{X_t - \mu}{4B} - \delta_{t-1}\rp \rp 
    \\&\quad\times\exp \lp -\beta\frac{\psi_E(\lambda_t)}{(4B)^2} \| X_t - \bar\mu_{t-1} \|^2\rp
    \\& = R_{t-1} \Upsilon_\epsilon \lp M_{t-1}+\frac{\lambda_t}{D}\delta_{t-1} + \beta \frac{\lambda_t}{D} \lp Y_t - \delta_{t-1} \rp \rp \exp \lp-\beta\psi_E(\lambda_t) \| Y_t - \delta_{t-1} \|^2\rp
    \\& = R_{t-1} \cosh \Upsilon_\epsilon \lp A_t  + \beta \frac{\lambda_t}{D} Z_t \rp \exp \lp-\beta\psi_E(\lambda_t) \| Z_t \|^2\rp.
\end{align*}


We exploit its Taylor expansion
\begin{align*}
    \varphi_{t,\epsilon}(1) = \varphi_{t,\epsilon}(0) + \varphi_{t,\epsilon}'(0) + \int_0^1 (1-\beta)\varphi_{t,\epsilon}''(\beta)d\beta.
\end{align*}

Now we observe that 
\begin{align}
    \E_{t-1}\lb S_t\rb &= \E_{t-1}\lb\lim_{\epsilon \downarrow 0} \varphi_{t,\epsilon}(1)\rb\nonumber
    \\&\stackrel{(i)}{=} \lim_{\epsilon \downarrow 0} \E_{t-1}\lb\varphi_{t,\epsilon}(1)\rb\nonumber
    \\&= \lim_{\epsilon \downarrow 0}\E_{t-1} \lb \varphi_{t,\epsilon}(0) + \varphi_{t,\epsilon}'(0) + \int_0^1 (1-\beta)\varphi_{t,\epsilon}''(\beta)d\beta \rb\nonumber
    \\&\leq \limsup_{\epsilon \downarrow 0} \E_{t-1} \lb \varphi_{t,\epsilon}(0)\rb +\limsup_{\epsilon \downarrow 0} \E_{t-1} \lb\varphi_{t,\epsilon}'(0) \rb\label{eq:three_parts}
    \\&\quad+ \limsup_{\epsilon \downarrow 0}\E_{t-1} \lb\int_0^1 (1-\beta)\varphi_{t,\epsilon}''(\beta)d\beta\rb.  \nonumber
\end{align}
where (i) is obtained given that the martingale takes finitely many values, and so the expectation is a finite linear combination (and the limit transition preserves linear combinations). Now we analyze the three terms. First, note that 
$\varphi_{t,\epsilon}(0) = \E_{t-1} \lb\varphi_{t,\epsilon}(0)\rb = R_{t-1} \cosh \Upsilon_\epsilon \lp A_t \rp$. By Lemma~\ref{lemma:pinelis1}, 
\begin{align} \label{eq:first_term}
    \limsup_{\epsilon \downarrow 0}  \varphi_{t,\epsilon}(0)= \limsup_{\epsilon \downarrow 0} \lb R_{t-1} \cosh \Upsilon_\epsilon \lp A_t \rp \rb =  R_{t-1} \cosh \lim_{\epsilon \downarrow 0} \Upsilon_\epsilon \lp A_t \rp  = R_{t-1} \cosh \ldba A_t \rdba.
\end{align}

Second,
\begin{align*}
\frac{\varphi_{t,\epsilon}'(\beta)}{R_{t-1}} &= \frac{d}{d\beta} \lp  \cosh \Upsilon_\epsilon \lp A_t + \beta \frac{\lambda_t}{D} Z_t \rp \exp \lp-\beta\psi_E(\lambda_t) \| Z_t \|^2\rp \rp
\\&=  \exp \lp-\beta\psi_E(\lambda_t) \| Z_t \|^2\rp  \sinh \Upsilon_\epsilon \lp A_t + \beta \frac{\lambda_t}{D} Z_t \rp \Upsilon_\epsilon' \lp A_t  + \beta \frac{\lambda_t}{D} Z_t \rp \lp \frac{\lambda_t}{D} Z_t \rp  
\\&\quad -   \psi_E(\lambda_t) \| Z_t \|^2\cosh \Upsilon_\epsilon \lp A_t + \beta \frac{\lambda_t}{D} Z_t \rp  \exp \lp-\beta\psi_E(\lambda_t) \| Z_t \|^2\rp.
\end{align*}

Hence 
\begin{align*}
    \frac{\varphi'_{t, \epsilon}(0)}{R_{t-1}}&=    \sinh \Upsilon_\epsilon \lp A_t \rp \Upsilon_\epsilon' \lp A_t  \rp \lp \frac{\lambda_t}{D} Z_t \rp  -  \psi_E(\lambda_t) \| Z_t \|^2\cosh \Upsilon_\epsilon \lp A_t \rp,
\end{align*}
and so
\begin{align*}
    \E \lb \frac{\varphi'_{t, \epsilon}(0)}{R_{t-1}}\rb &=   \E \lb  \sinh \Upsilon_\epsilon \lp A_t \rp \Upsilon_\epsilon' \lp A_t  \rp \lp \frac{\lambda_t}{D} Z_t \rp  \rb-\E \lb   \psi_E(\lambda_t) \| Z_t \|^2\cosh \Upsilon_\epsilon \lp A_t \rp\rb
    \\&=     \sinh \Upsilon_\epsilon \lp A_t \rp \Upsilon_\epsilon' \lp A_t  \rp \lp \E \lb\frac{\lambda_t}{D} Z_t\rb \rp  -\E \lb   \psi_E(\lambda_t) \| Z_t \|^2\cosh \Upsilon_\epsilon \lp A_t \rp\rb
    \\&=     \sinh \Upsilon_\epsilon \lp A_t \rp \Upsilon_\epsilon' \lp A_t  \rp \lp \E \lb\frac{\lambda_t}{D} (Y_t - \delta_{t-1})\rb \rp  
    \\&\quad-\E \lb   \psi_E(\lambda_t) \| Z_t \|^2\cosh \Upsilon_\epsilon \lp A_t \rp\rb
    \\&\stackrel{(i)}{=}    -\sinh \Upsilon_\epsilon \lp A_t \rp \Upsilon_\epsilon' \lp A_t  \rp \lp \frac{\lambda_t}{D} \delta_{t-1} \rp  -\E \lb   \psi_E(\lambda_t) \| Z_t \|^2\cosh \Upsilon_\epsilon \lp A_t \rp\rb,
\end{align*}
where (i) is obtained due to $\E_{t-1}[Y_t] = 0$. Note that $\delta_{t-1}$  is simply $M_{t-1}$ rescaled by a positive factor, and so they are both aligned to $A_t = M_{t-1} + \frac{\lambda_t}{D} \delta_{t-1}$. 
Based on Lemma~\ref{lemma:limit_upsilon}, this implies that
\begin{align*}
    \lim_{\epsilon \downarrow 0}\lb\sinh \Upsilon_\epsilon \lp A_t \rp \Upsilon_\epsilon' \lp A_t  \rp \lp \frac{\lambda_t}{D} \delta_{t-1} \rp\rb &= \lb \lim_{\epsilon \downarrow 0}\sinh \Upsilon_\epsilon \lp A_t \rp \rb  \lb\lim_{\epsilon \downarrow 0}\Upsilon_\epsilon' \lp A_t  \rp \lp \frac{\lambda_t}{D} \delta_{t-1} \rp\rb
    \\&= \ldba\frac{\lambda_t}{D}  \delta_{t-1}\rdba\sinh (\|A_t\|).
\end{align*}
Thus,
\begin{align}
     \limsup_{\epsilon \downarrow 0}\E \lb \frac{\varphi'_{t, \epsilon}(0)}{R_{t-1}}\rb &= - \frac{\lambda_t}{D} \ldba  \delta_{t-1}\rdba\sinh (\|A_t\|) -\lim_{\epsilon \downarrow 0}\E \lb   \psi_E(\lambda_t) \| Z_t \|^2\cosh \Upsilon_\epsilon \lp A_t \rp\rb \nonumber
     \\&\stackrel{(i)}{=} - \frac{\lambda_t}{D} \ldba  \delta_{t-1}\rdba\sinh (\|A_t\|) -\E \lb \lim_{\epsilon \downarrow 0} \lb   \psi_E(\lambda_t) \| Z_t \|^2\cosh \Upsilon_\epsilon \lp A_t \rp\rb\rb \nonumber
    \\&= - \frac{\lambda_t}{D} \ldba  \delta_{t-1}\rdba\sinh (\|A_t\|) -\E \lb   \psi_E(\lambda_t) \| Z_t \|^2\cosh \|A_t\|\rb,\label{eq:second_term}
\end{align}
where (i) is obtained given that the limit transition preserves linear combinations. 

Third,
\begin{align*}
\frac{\varphi_{t,\epsilon}''(\beta)}{R_{t-1}} \exp \lp\beta\psi_E(\lambda_t) \| Z_t \|^2\rp &=   \cosh \Upsilon_\epsilon \lp A_t  + \beta \frac{\lambda_t}{D} Z_t \rp \lb\Upsilon_\epsilon' \lp A_t  + \beta \frac{\lambda_t}{D} Z_t \rp \lp \frac{\lambda_t}{D} Z_t \rp\rb^2
\\&\quad + \sinh \Upsilon_\epsilon \lp A_t  + \beta \frac{\lambda_t}{D} Z_t \rp \Upsilon_\epsilon'' \lp A_t  + \beta \frac{\lambda_t}{D} Z_t \rp \lp \frac{\lambda_t}{D} Z_t \rp 
\\&\quad -  2\psi_E(\lambda_t) \| Z_t \|^2\sinh \Upsilon_\epsilon \lp A_t  + \beta \frac{\lambda_t}{D} Z_t \rp 
\\&\quad\quad\times\Upsilon_\epsilon' \lp A_t + \beta \frac{\lambda_t}{D} Z_t \rp \lp \frac{\lambda_t}{D} Z_t \rp
\\&\quad +  \lp\psi_E(\lambda_t) \| Z_t \|^2\rp^2 \cosh \Upsilon_\epsilon \lp A_t  + \beta \frac{\lambda_t}{D} Z_t \rp.
\end{align*}

We note that $\sinh u \leq u \cosh u$ if, and only if, $u \geq 0$. This implies that, when $u'' u > 0$, one has $(\cosh u)'' = u'^2 \cosh u + u'' \sinh u \leq (u'^2 + u''u)\cosh u = \frac{1}{2}(u^2)''\cosh u$ . If $u''u \leq 0$, then $(\cosh u)'' \leq u'^2 \cosh u$. Thus $(\cosh u)'' \leq  \max(\frac{1}{2}(u^2)'', u'^2) \cosh u$. Taking $u = \Upsilon_\epsilon\lp A_t + \beta \lambda_tZ_t/D\rp$, and in view of Lemma~\ref{lemma:pinelis1}, 
\begin{align*}
    \max(\frac{1}{2}(u^2)'', u'^2) \leq D^2 \ldba \frac{\lambda_t}{D} Z_t \rdba^2 = \ldba \lambda_t Z_t \rdba^2.
\end{align*}
Consequently, $(\cosh u)''$ is equal to
\begin{align*}
    &\cosh \Upsilon_\epsilon \lp A_t  + \beta \frac{\lambda_t}{D} Z_t \rp \lb\Upsilon_\epsilon' \lp A_t  + \beta \frac{\lambda_t}{D} Z_t \rp \lp \frac{\lambda_t}{D} Z_t \rp\rb^2 
    \\&+ \sinh \Upsilon_\epsilon \lp A_t  + \beta \frac{\lambda_t}{D} Z_t \rp \Upsilon_\epsilon'' \lp A_t  + \beta \frac{\lambda_t}{D} Z_t \rp \lp \frac{\lambda_t}{D} Z_t \rp
\end{align*}
and is upper bounded by $\cosh \Upsilon_\epsilon \lp A_t  + \beta \frac{\lambda_t}{D} Z_t \rp \ldba \lambda_t Z_t \rdba^2$. Furthermore,
\begin{align*}
    \lba -  2\psi_E(\lambda_t) \| Z_t \|^2\sinh \Upsilon_\epsilon \lp A_t  + \beta \frac{\lambda_t}{D} Z_t \rp \Upsilon_\epsilon' \lp A_t + \beta \frac{\lambda_t}{D} Z_t \rp \lp \frac{\lambda_t}{D} Z_t \rp \rba
\end{align*}
is upper bounded by $2\psi_E(\lambda_t) \| Z_t \|^2\cosh \Upsilon_\epsilon \lp A_t  + \beta \frac{\lambda_t}{D} Z_t \rp \ldba \lambda_t Z_t \rdba$ given Lemma~\ref{lemma:pinelis1}, $D \geq 1$, and $|\sinh u| \leq \cosh u$. Thus,
\begin{align*}
\frac{\varphi_{t,\epsilon}''(\beta)}{R_{t-1}} \exp \lp\beta\psi_E(\lambda_t) \| Z_t \|^2\rp &\leq   \cosh \Upsilon_\epsilon \lp A_t  + \beta \frac{\lambda_t}{D} Z_t \rp \ldba \lambda_t Z_t \rdba^2 
\\&\quad+2\psi_E(\lambda_t) \| Z_t \|^2\cosh \Upsilon_\epsilon \lp A_t  + \beta \frac{\lambda_t}{D} Z_t \rp \ldba \lambda_t Z_t \rdba
\\&\quad +  \lp\psi_E(\lambda_t) \| Z_t \|^2\rp^2 \cosh \Upsilon_\epsilon \lp A_t  + \beta \frac{\lambda_t}{D} Z_t \rp
\\&= \lp\psi_E(\lambda_t) \| Z_t \|^2 + \| \lambda_tZ_t \|\rp^2\cosh \Upsilon_\epsilon \lp A_t  + \beta \frac{\lambda_t}{D} Z_t \rp . 
\end{align*}

We note that
\begin{align*}
    \limsup_{\epsilon \downarrow 0}\E_{t-1} \lb\int_0^1 (1-\beta)\varphi_{t,\epsilon}''(\beta)d\beta\rb \leq \E_{t-1} \lb\limsup_{\epsilon \downarrow 0}\int_0^1 (1-\beta)\varphi_{t,\epsilon}''(\beta)d\beta\rb,
\end{align*}
and we observe that $\limsup_{\epsilon \downarrow 0} \int_0^1 (1 - \beta)\varphi_{t,\epsilon}''(\beta)d\beta$ is upper bounded by 
\begin{align} \label{eq:second_derivative_dct}
     H_t \limsup_{\epsilon \downarrow 0} \int_0^1 (1 - \beta)\cosh \Upsilon_\epsilon \lp A_t  + \beta \frac{\lambda_t}{D} Z_t \rp \exp \lp-\beta\psi_E(\lambda_t) \| Z_t \|^2\rp  d\beta,
\end{align}
where $H_t = R_{t-1}\lp\psi_E(\lambda_t) \| Z_t \|^2 + \|\lambda_t Z_t \|\rp^2$.

In view of Lemma~\ref{lemma:dct}, $(1 - \beta)\cosh \Upsilon_\epsilon \lp A_t  + \beta \frac{\lambda_t}{D} Z_t \rp \exp \lp-\beta\psi_E(\lambda_t) \| Z_t \|^2\rp$ is dominated for all $\beta \in [0,1]$ and $\epsilon \in [0,1]$.  The dominated convergence theorem implies that the $\limsup$ on the right hand side of \eqref{eq:second_derivative_dct} is actually a $\lim$ and is equal to
\begin{align}
    \nonumber&H_t  \int_0^1 (1 - \beta)\lb\lim_{\epsilon \downarrow 0}\cosh \Upsilon_\epsilon \lp A_t  + \beta \frac{\lambda_t}{D} Z_t \rp\rb \exp \lp-\beta\psi_E(\lambda_t) \| Z_t \|^2\rp  d\beta
    \\\nonumber=&H_t  \int_0^1 (1 - \beta)\cosh \ldba  A_t  + \beta \frac{\lambda_t}{D} Z_t \rdba \exp \lp-\beta\psi_E(\lambda_t) \| Z_t \|^2\rp  d\beta
    \\\nonumber\leq&H_t  \int_0^1 (1 - \beta)\cosh \ldba  A_t   \rdba \exp \lp \beta \ldba  \frac{\lambda_t}{D} Z_t\rdba - \beta\psi_E(\lambda_t) \| Z_t \|^2\rp  d\beta
    \\\nonumber\stackrel{(i)}{=} &H_t \cosh \ldba A_t  \rdba\frac{  \exp \lp \ldba \frac{\lambda_t}{D} Z_t \rdba  - \psi_E(\lambda_t) \| Z_t \|^2\rp - \lp \ldba \frac{\lambda_t}{D} Z_t \rdba  - \psi_E(\lambda_t) \| Z_t \|^2\rp - 1}{\lp \ldba \frac{\lambda_t}{D} Z_t \rdba  - \psi_E(\lambda_t) \| Z_t \|^2\rp^2}
    \\\nonumber= &R_{t-1} \cosh \ldba A_t  \rdba \lp \psi_E(\lambda_t) \| Z_t \|^2 +  \ldba \lambda_t Z_t \rdba  \rp^2 \times
    \\\nonumber&\quad\times\frac{  \exp \lp \ldba \frac{\lambda_t}{D} Z_t \rdba  - \psi_E(\lambda_t) \| Z_t \|^2\rp - \lp \ldba \frac{\lambda_t}{D} Z_t \rdba  - \psi_E(\lambda_t) \| Z_t \|^2\rp - 1}{\lp \ldba \frac{\lambda_t}{D} Z_t \rdba  - \psi_E(\lambda_t) \| Z_t \|^2\rp^2}
    \\&\stackrel{(ii)}{\leq} R_{t-1}\cosh \ldba A_t  \rdba \pel \| Z_t \|^2, \nonumber
\end{align}
where equality (i) is obtained from $\int_0^1 (1-\beta) e^{a\beta} d\beta = \frac{e^a - a - 1}{a^2}$, and inequality  (ii) follows from Lemma~\ref{lemma:main_ineq}. Thus, 
\begin{align}
    \limsup_{\epsilon \downarrow 0}\E_{t-1} \lb\int_0^1 (1-\beta)\varphi_{t,\epsilon}''(\beta)d\beta\rb \leq \E_{t-1} \lb R_{t-1}\cosh \ldba A_t  \rdba \pel \| Z_t \|^2 \rb \label{eq:third_term}.
\end{align}

We observe that \eqref{eq:three_parts} is upper bounded by the sum of \eqref{eq:first_term}, \eqref{eq:second_term}, and \eqref{eq:third_term}, which add up to 
\begin{align*}
    R_{t-1} \lc \cosh \ldba A_t \rdba  - \frac{\lambda_t}{D} \ldba  \delta_{t-1}\rdba\sinh (\|A_t\|)\rc .
\end{align*}

Given that $\delta_{t-1}$  is simply $M_{t-1}$ re-scaled by a positive factor, it follows that $A_t = \ldba M_{t-1} + \frac{\lambda_t}{D} \delta_{t-1} \rdba = \ldba M_{t-1}\rdba+\ldba \frac{\lambda_t}{D}  \delta_{t-1} \rdba$. Thus, 
\begin{align*}
    -\ldba  \frac{\lambda_t}{D}  \delta_{t-1} \rdba\sinh \ldba A_t  \rdba &= -\ldba \lambda_t \delta_{t-1} \rdba\sinh \ldba M_{t-1}+ \frac{\lambda_t}{D}  \delta_{t-1}  \rdba
    \\&=  -\ldba  \frac{\lambda_t}{D}  \delta_{t-1} \rdba\sinh \lp \ldba M_{t-1}\rdba+\ldba  \frac{\lambda_t}{D} \delta_{t-1}  \rdba\rp    
    \\&\stackrel{(i)}{\leq}  \cosh \ldba M_{t-1}  \rdba -  \cosh \lp \ldba M_{t-1}\rdba+\ldba  \frac{\lambda_t}{D} \delta_{t-1}  \rdba\rp  
    \\&\stackrel{}{=}  \cosh \ldba M_{t-1}  \rdba -  \cosh \ldba M_{t-1}+ \frac{\lambda_t}{D}  \delta_{t-1}  \rdba
    \\&\stackrel{}{=}  \cosh \ldba M_{t-1}  \rdba -  \cosh \ldba A_t  \rdba,
\end{align*}
where (i) is obtained in view of Lemma~\ref{lemma:ineq_cosh_sinh}. We conclude the proof by noting that
\begin{align*}
      \E_{t-1}\lb S_t\rb &\leq R_{t-1} \lc \cosh \ldba A_t \rdba  - \frac{\lambda_t}{D} \ldba  \delta_{t-1}\rdba\sinh (\|A_t\|)\rc 
      \\&\leq R_{t-1} \lc \cosh \ldba A_t \rdba + \cosh \ldba M_{t-1}  \rdba -  \cosh \ldba A_t  \rdba\rc 
      \\&= R_{t-1}\cosh \ldba M_{t-1} \rdba 
      \\&= S_{t-1}.
\end{align*}
\end{proof}

\section{Conclusion} \label{section:summary}

\subsection{Summary}

Current concentration inequalities for bounded vector-valued random variables encompass extensions of the Hoeffding and Bernstein inequalities. Although Bernstein's inequality is generally tighter than Hoeffding's, it requires to know an upper bound on the variance of the random variables (which is generally unknown). The primary contribution of this current work is to provide a vector-valued empirical Bernstein concentration bound, which replaces the true variance by its empirical estimator. The subsequent confidence sets are typically tighter than those provided by the vector-valued Hoeffding inequality, and are actionable in practice. The proposed empirical Bernstein holds in 2-smooth separable Banach spaces, which include finite dimensional Euclidean spaces and separable Hilbert spaces. The bound extends the empirical Bernstein concentration inequality from \cite{waudby2024estimating} to this more general setting, and its proof combines novel deterministic inequalities with some of the theoretical tools presented in \cite{pinelis1994optimum} to derive a nonnegative supermartingale construction. This nonnegative supermartingale, in conjunction with Ville's inequality, gives way to both fixed sample size confidence sets (batch setting) and anytime-valid confidence sequences (sequential setting). The empirical performance of the proposed empirical Bernstein for different multivariate bounded distributions is also exhibited. 

\subsection{Future work}

The need of developing a series of very technical results may lead to an ad-hoc appearance of the work, but we believe that our techniques and overall approach can strongly inspire new concentration results. The techniques developed here most likely have immediate implications for novel vector-valued inequalities. Tight vector-valued concentration inequalities are yet to be developed for sampling without-replacement, Markov-chain dependence, and self-normalized processes, among others; we expect our techniques to be recycled in this space. Generally, we regard Pinelis' contributions as a cornerstone for vector-valued concentration inequalities. Nonetheless, closing the gap between this cornerstone and particular problems is far from trivial. In this sense, we believe that our work establishes a general approach and proof techniques that can be used in related problems that are also not direct implications of Pinelis' contributions. 
 
The approach developed may also deeply inspire research outside of vector-valued concentration.  In our efforts of extending concentration inequalities to vector-valued data, we had to establish  a number of results that were substantially tighter than previous efforts, as previous techniques were insufficient in our more complex setting (e.g., our Lemma~\ref{lemma:main_ineq} replaces the inequality of \cite[Proposition 4.1]{fan_exponential_2015}, because we needed a significantly tighter deterministic inequality). Our contribution elucidates how previous efforts can be highly refined to yield powerful results in non-scalar settings, thus setting a strong foundation for other non-scalar valued problems. For example, dimension-free matrix-valued (or more generally, operator-valued) concentration inequalities compose an exciting, open line of research, and we expect that novel operator-valued inequalities may follow a similar approach to the one established here.  

\subsection{Applications}

Mean estimation and inference in Hilbert spaces is of special interest in the fields of functional data analysis (FDA) and reproducing kernel Hilbert space (RKHS) methods, among others. In FDA, the random variables are usually assumed to belong to the space of integrable functions over some set, commonly the unit interval \cite{ramsay2005functional, horvath2012inference, wang2016functional}. While some of the FDA literature uses the sup-norm \cite{degras2017simultaneous, cai2019simultaneous}, there is a rich body of literature on $L^2$-confidence balls \cite{juditsky2003nonparametric, cai2006adaptive, bull2013adaptive, szabo2015honest}. Our vector-valued empirical-Bernstein inequality gives a natural, immediate route to finite-sample $L^2$-confidence balls. 

RKHS methods embed the original random variables in a Hilbert space (many times of infinite dimensions) by means of a reproducing kernel, giving way to methodology with significant impact in the statistics and machine learning communities \cite{scholkopf2002learning, gretton2005measuring, szekely2007measuring, gretton2012kernel}. RKHS methods have widely exploited concentration inequalities for developing procedures with finite-sample guarantees, see e.g. \cite[Theorem 7]{gretton2012kernel}, \cite[Theorem 1]{lopez2015towards}, \cite[Theorem 2]{schneider2016probability}, \cite[Proposition A.1]{tolstikhin2017minimax}, \cite[Theorem 4.1]{chatalic2022nystrom}, \cite[Proposition 4]{sun2023kernel}, or \cite[Theorem A.1]{wolfer2025variance}. Our empirical Bernstein inequality is expected to dominate all these previous efforts. In fact, our empirical Bernstein inequality has already been explored in \cite[Appendix D]{liu2024robustness}, outperforming all of them. In the RKHS-valued experiments presented in Section~\ref{section:experiments},  we see that our inequality is tighter than Pinelis' Hoeffding-type inequality, which is already a refinement of the inequality used in \cite[Appendix D]{liu2024robustness} that outperforms all its current competitors (except our own inequality).  

\begin{appendix}

\section{Proofs of auxiliary lemmas used in Theorem \ref{theorem:main_theorem}} \label{section:aux_lemmas}

\subsection{Proof of Lemma \ref{lemma:pinelis2}}

Take $\X \times \R$ to be the direct product of Banach spaces with norm $\| (m, \gamma) \| = \max(\|m\|, |\gamma|)$ for all $(m, \gamma) \in \X \times \R$. Note that both $\X$ and $\R$ are separable Banach spaces, and thus so is $\X \times \R$. Let $(\lc B_{k, \epsilon}: k = 1, \ldots, k(j, \epsilon) \rc)_{j = 0}^\infty$ be an increasing sequence of sets of balls in $\X \times \R$ of radius $\epsilon$ such that $k(0, \epsilon) < k(1, \epsilon) < k(2, \epsilon) < \ldots$ and
    \begin{align} \label{eq:condition_finite_dimensional_martingale_construction}
        \Pb\lp\bigcap_{i = 1}^j\bigcup_{k = 1}^{k(j, \epsilon)} \lc (M_i, \lambda_{i+1}) \in B_{k, \epsilon} \rc \rp\geq 1 - \epsilon, \quad j=1, 2, \ldots.
    \end{align}
    The existence of such a sequence of sets is guaranteed by the tightness of any probability measure on a separable Banach space. Consider the approximations $M_{t, \epsilon} = \E[M_t | \F_{t, \epsilon}]$ and $\lambda_{t+1, \epsilon} = \E[\lambda_{t+1} | \F_{t, \epsilon}]$, where $\F_{t, \epsilon}$ is the $\sigma$-field generated by all the events of the form
    \begin{align*}
        \{(M_i, \lambda_{i+1}) \in B_{k, \epsilon}\}, \quad i = 0, 1, \ldots, t, \quad k = 1 \ldots, k(t, \epsilon).
    \end{align*}
    Note that $M_{t, \epsilon}$ is a martingale with respect to $(\F_{t, \epsilon})_{t\geq 1}$, given that $\E[M_{t, \epsilon} | \F_{t-1, \epsilon}] = \E[\E[M_t | \F_{t, \epsilon}] | \F_{t-1, \epsilon}] = \E[M_t | \F_{t-1, \epsilon}] = \E[\E[M_t | \F_{t-1}] | \F_{t-1, \epsilon}] = \E[M_{t-1} | \F_{t-1, \epsilon}] = M_{t-1, \epsilon}$. Furthermore, $(M_{t, \epsilon}, \lambda_{t, \epsilon})$ takes only a finite number of values because the filtration $\F_{t, \epsilon}$ is finite. Lastly,
    \begin{align*}
        \ldba (M_{i, \epsilon}, \lambda_{i+1, \epsilon}) - (M_i, \lambda_{i+1}) \rdba \leq 2\epsilon, \quad \forall (M_i, \lambda_{i+1}) \in \bigcup_{k = 1}^{k(t, \epsilon)}B_{k, \epsilon},
    \end{align*}
    and so 
    \begin{align*}
        \ldba (M_{i, \epsilon}, \lambda_{i+1, \epsilon}) - (M_i, \lambda_{i+1}) \rdba \leq 2\epsilon
    \end{align*}
    simultaneously for all $i \leq t$ with probability at least $1 - \epsilon$. Given that $\epsilon$ is arbitrary, the convergence of $V_{t, \epsilon}$ to $V_t$ in probability follows.

\subsection{Proof of Lemma~\ref{lemma:supermartingale_approximations}}

Let us start by presenting an additional lemma, proved in a later subsection. 

\begin{lemma} \label{lemma:conditional_to_f}
Let $(W_t)_{t \geq 1}$ be a real, integrable random process. The statements
\begin{longlist}
    \item $\E\lb W_t | \sigma \lp W_1, \ldots, W_{t-1}\rp \rb \leq 0$,
    \item $\E\lb W_t f(W_1, \ldots, W_{t-1}) \rb \leq 0$ for all nonnegative bounded measurable functions $f$,
    \item $\E\lb W_t f(W_1, \ldots, W_{t-1}) \rb \leq 0$ for all nonnegative bounded continuous functions $f$,
\end{longlist}
are equivalent. 
\end{lemma}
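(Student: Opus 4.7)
The plan is to establish the cycle (i) $\Rightarrow$ (ii) $\Rightarrow$ (iii) $\Rightarrow$ (i). The first two implications are soft: the first follows from the tower property, since for any nonnegative bounded measurable $f$,
\[
\E\lb W_t\, f(W_1,\ldots,W_{t-1})\rb = \E\lb \E[W_t \mid \sigma(W_1,\ldots,W_{t-1})]\, f(W_1,\ldots,W_{t-1})\rb \leq 0
\]
because the conditional expectation is nonpositive by (i) and $f \geq 0$. The second is immediate, as continuous functions form a subclass of bounded measurable functions.

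The substantive step is (iii) $\Rightarrow$ (i), where the obstacle is that continuous test functions are a much smaller class than indicators of events in $\sigma(W_1,\ldots,W_{t-1})$. I would argue by contradiction: write $g = \E[W_t \mid \sigma(W_1,\ldots,W_{t-1})]$, which is a Borel function of $(W_1,\ldots,W_{t-1})$, say $g = h(W_1,\ldots,W_{t-1})$. If (i) fails, then there exists $\epsilon > 0$ and a Borel set $B \subseteq \R^{t-1}$ such that the event $A = \{(W_1,\ldots,W_{t-1}) \in B\}$ has positive probability and $h \geq \epsilon$ on $B$. Then $\E[W_t \mathbf{1}_A] = \E[g \mathbf{1}_A] \geq \epsilon \Pb(A) > 0$.

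To reach a contradiction, I would approximate $\mathbf{1}_B$ by a sequence of nonnegative bounded continuous functions $f_n : \R^{t-1} \to [0,1]$. Let $\mu$ denote the law of $(W_1,\ldots,W_{t-1})$ on $\R^{t-1}$. Since $\mu$ is a finite Borel measure on a Polish space, it is inner and outer regular; using Urysohn's lemma on a closed set $F \subseteq B$ and an open set $U \supseteq B$ with $\mu(U \setminus F) < \delta$, I can produce $f_n$ with $\mu(\{f_n \neq \mathbf{1}_B\}) \to 0$. Since $W_t$ is integrable, the collection $\{|W_t| \mathbf{1}_{E} : \Pb(E) \leq \delta\}$ can be made arbitrarily small in expectation as $\delta \downarrow 0$, and because $|f_n - \mathbf{1}_B| \leq 1$,
\[
\lba \E[W_t f_n(W_1,\ldots,W_{t-1})] - \E[W_t \mathbf{1}_A] \rba \leq \E\lb |W_t|\, \mathbf{1}_{\{f_n(W_1,\ldots,W_{t-1}) \neq \mathbf{1}_A\}}\rb \longrightarrow 0.
\]
Combining this with (iii), which gives $\E[W_t f_n(W_1,\ldots,W_{t-1})] \leq 0$ for every $n$, yields $\E[W_t \mathbf{1}_A] \leq 0$, contradicting the bound $\E[W_t \mathbf{1}_A] \geq \epsilon \Pb(A) > 0$. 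Hence (i) holds.

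The main obstacle is the last implication, and specifically the integrability argument: one must ensure that the continuous approximations $f_n$ can be coupled with $W_t$ in a way that the error integrates to zero. This is handled by regularity of $\mu$ together with the absolute continuity property of the integral $E \mapsto \E[|W_t| \mathbf{1}_E]$ for integrable $W_t$.
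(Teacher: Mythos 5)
Your proof is correct, and it takes a genuinely different route than the paper's. The paper proves the cycle $(iii)\Rightarrow(ii)\Rightarrow(i)\Rightarrow(iii)$: it uses Lusin's theorem to approximate a general nonnegative bounded measurable $f$ by a continuous $f_\delta$ agreeing with $f$ off a set of small probability (establishing $(iii)\Rightarrow(ii)$), then deduces $(ii)\Rightarrow(i)$ with the clever test function $f=\ind(\E[W_t\mid\F_{t-1}]>0)$: since $\ind(\E[W_t\mid\F_{t-1}]>0)\,\E[W_t\mid\F_{t-1}]$ is nonnegative with nonpositive expectation, it is zero a.s.; and finally $(i)\Rightarrow(iii)$ via the tower property. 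You instead establish the cycle $(i)\Rightarrow(ii)\Rightarrow(iii)\Rightarrow(i)$, collapsing the two soft steps into trivial inclusions and concentrating all the work in $(iii)\Rightarrow(i)$, which you handle by contradiction: if $\E[W_t\mid\F_{t-1}]>0$ on a set of positive probability, you exhibit a Borel set $B$ with $\E[W_t\mathbf{1}_A]>0$, and then approximate $\mathbf{1}_B$ by continuous $[0,1]$-valued $f_n$ using inner/outer regularity of the push-forward law on $\R^{t-1}$ together with Urysohn, passing to the limit via absolute continuity of the integral of $|W_t|$. The underlying analytic engine is the same in both arguments (approximating measurable objects by continuous ones on a Polish space with a finite Borel measure, controlled via absolute continuity of $E\mapsto\E[|W_t|\mathbf{1}_E]$), but your version only ever needs to approximate indicator functions rather than arbitrary bounded measurable $f$, and it dispenses with the paper's intermediate test-function trick entirely. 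This makes your argument somewhat more self-contained and conceptually direct, at the cost of being a proof by contradiction rather than a direct one. Both are valid and roughly equal in length.
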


    Based on Lemma~\ref{lemma:conditional_to_f}, it suffices to check that $\E\lb \lp W_{t} - W_{t-1}\rp f\lp W_{1}, \ldots, W_{t-1}\rp\rb \leq 0$ for all nonnegative bounded continuous functions $f$. 
    
    Let $f$ be an arbitrary nonnegative bounded continuous function, and let $C_f > 0$ such that $\|f\|_\infty \leq C_f$. Denote
    \begin{align*}
        Z_t = \lp W_{t} - W_{t-1}\rp f\lp W_{1}, \ldots, W_{t-1}\rp, \quad Z_{t, \epsilon} = \lp W_{t, \epsilon} - W_{t-1, \epsilon}\rp f\lp W_{1, \epsilon}, \ldots, W_{t-1, \epsilon}\rp.
    \end{align*}
    Note that $\E\lb Z_{t, \epsilon}\rb \leq 0$ given that $(W_{t, \epsilon})_{t \geq 1}$ is a supermartingale. Further, $\|Z_{t, \epsilon}\|_\infty \leq C := (C_t+C_{t-1})C_f$.  For $\delta > 0$, it follows that
    \begin{align*}
        \E\lb Z_t\rb &= \E\lb Z_t \ind(Z_t - Z_{t, \epsilon} < \delta )\rb + \E\lb Z_t \ind(Z_t - Z_{t, \epsilon} \geq \delta )\rb
        \\&\leq \E\lb (Z_{t, \epsilon} + \delta) \ind(Z_t - Z_{t, \epsilon} < \delta )\rb + \E\lb |Z_t| \ind(Z_t - Z_{t, \epsilon} \geq \delta )\rb
         \\&= \E\lb Z_{t, \epsilon} + \delta\rb - \E\lb (Z_{t, \epsilon} + \delta) \ind(Z_t - Z_{t, \epsilon} \geq \delta )\rb + \E\lb |Z_t| \ind(Z_t - Z_{t, \epsilon} \geq \delta )\rb
         \\&\leq  \delta - (-C + \delta) \E\lb  \ind(Z_t - Z_{t, \epsilon} \geq \delta )\rb + \E\lb |Z_t| \ind(Z_t - Z_{t, \epsilon} \geq \delta )\rb
         \\&=  \delta + (C - \delta) \Pb\lb  Z_t - Z_{t, \epsilon} \geq \delta \rb + \E\lb |Z_t| \ind(Z_t - Z_{t, \epsilon} \geq \delta )\rb.
    \end{align*}
    From the convergence in probability of $(W_{1, \epsilon}, \ldots, W_{t, \epsilon})$ to $(W_{1}, \ldots, W_{t})$ and the continuous mapping application, $Z_{t, \epsilon}$ converges to $Z_t$ in probability as $\epsilon \downarrow 0$. Thus, $\Pb\lb  Z_t - Z_{t, \epsilon} \geq \delta \rb$ converges to zero as $\epsilon \downarrow 0$, and $|Z_t| \ind(Z_t - Z_{t, \epsilon} \geq \delta )$ converges to $0$ in probability as $\epsilon \downarrow 0$. 

    Consequently, there exists a sequence $(\epsilon_k)_{k\geq1}$ such that $\epsilon_k$ and $\E\lb |Z_t| \ind(Z_t - Z_{t, \epsilon_k} \geq \delta ) \rb$ converge to zero as $k \to \infty$.  This may be seen by applying the dominated convergence theorem with the dominating integrable function $|Z_t|$ to a sequence $\lp |Z_t| \ind(Z_t - Z_{t, \epsilon_k} \geq \delta )\rp_{k \geq 1}$ that converges almost surely to $0$ (and with $\epsilon_k$ converging to $0$), whose existence is derived from the convergence in probability of $|Z_t| \ind(Z_t - Z_{t, \epsilon} \geq \delta )$ to 0 as $\epsilon \downarrow 0$. It follows that
    \begin{align*}
         \E\lb Z_t\rb \leq\lim_{k \to \infty} \lc   \delta + (C - \delta) \Pb\lb  Z_t - Z_{t, \epsilon_k} \geq \delta \rb + \E\lb |Z_t| \ind(Z_t - Z_{t, \epsilon_k} \geq \delta )\rb\rc = \delta + 0 + 0 = \delta.
    \end{align*}
     Given that $\delta$ is arbitrary, we conclude that $ \E\lb Z_t\rb \leq 0$.

   It only remains to prove Lemma~\ref{lemma:conditional_to_f}, which we accomplish later. This completes the proof of Lemma~\ref{lemma:supermartingale_approximations}.

\subsection{Proof of Lemma \ref{lemma:limit_upsilon}}

Note that 
\begin{align*}
    \Upsilon_\epsilon'(x)(x) = \lim_{t \to 0} \frac{\sqrt{\int_\X \| (1+t)x - \epsilon y \|^2 \gamma(dy)} - \sqrt{\int_\X \| x - \epsilon y \|^2 \gamma(dy)}}{t} = \frac{\partial\Xi}{\partial t} (\epsilon, 0),
\end{align*}
    where $\Xi(\epsilon, t) = \sqrt{\int_\X \| (1+t)x - \epsilon y \|^2 \gamma(dy)}$.   By construction, $\Xi$ is smooth, and so 
    \begin{align*}
        \lim_{\epsilon \downarrow 0}\Upsilon_\epsilon'(x)(x) = \lim_{\epsilon \downarrow 0}\frac{\partial\Xi}{\partial t} (\epsilon, 0) = \frac{\partial\Xi}{\partial t} (0, 0) = \|x\|,
    \end{align*}
    the last equality holding because $\Xi(0, t) = \|(1+t)x\|$. 

\subsection{Proof of Lemma \ref{lemma:dct}}

By definition of a $(2, D)$-smooth Banach space, for all $0\leq\epsilon < 1$,
    \begin{align*}
        \Upsilon_\epsilon(z) &= \sqrt{\int_\X \| z - \epsilon y \|^2 \gamma(dy)}
        \\&\leq  \sqrt{\int_\X 2\| z \|^2 + 2D^2\| \epsilon y \|^2 \gamma(dy)}
        \\&\leq  \sqrt{ 2\| z \|^2 + 2D^2\int_\X\|  y \|^2 \gamma(dy)}.
    \end{align*}
    It suffices to note that $\|z\| \leq c(x, y)$ for all $z = x + \beta y$, $\beta \in [0, 1]$.

\subsection{Proof of Lemma \ref{lemma:main_ineq}}

Let us define 
\begin{align*}
    g(x, \lambda, D) := -\psi_E(\lambda)x^2 + \frac{\lp\lambda x + \pe(\lambda)x^2\rp^2}{\lp\frac{\lambda x}{D} - \pe(\lambda)x^2\rp^2}\lp e^{\frac{\lambda x}{D} - \pe(\lambda)x^2} -\lp \frac{\lambda x}{D} - \pe(\lambda)x^2 \rp -1 \rp
\end{align*}
for $\lambda \in (0, 0.8]$, $x\in(0, 1/2]$, and $D \in [1, \infty)$.

We prove the inequality in four steps:
\begin{longlist}
    \item First, we upper bound $g(x, \lambda, D)$ by a function $f(x, \lambda, D)$, so it suffices to show that $f \leq 0$.
    \item Second, we show that $f(x, \lambda, 1) \leq 0$ implies $f(x, \lambda, D) \leq 0$ for all $D \geq 1$.
    \item Third, we prove that $f(x, \lambda, 1)$ is nonpositive as long as a univariate function $h(\lambda)$ is also nonpositive. 
    \item Lastly, we show that $h(\lambda) \leq 0$ for all $\lambda \in [0, 0.8]$.
\end{longlist}

It remains to prove each of the steps. 

\textit{Details of step 1.}
Define
    \begin{align*}
        q(y) = 1 + y + \frac{y^2}{2} + \frac{y^3}{6} + \frac{y^4}{18}, \quad y \in \R.
    \end{align*}
    We observe that $e^y \leq q(y)$ for all $y \in [-1, 1]$. Indeed, 
    \begin{align*}
        q(y) - e^y &= \frac{y^4}{18} - \sum_{k=4}^\infty \frac{y^k}{k!}= y^4 \lp \frac{1}{18} - \sum_{k=4}^\infty \frac{y^{k-4}}{k!} \rp 
        \\&\geq y^4 \lp \frac{1}{18} - \sum_{k=4}^\infty \frac{1}{k!} \rp = y^4 \lp \frac{1}{18} - \lp e - \frac{8}{3} \rp\rp 
        \\&\geq 0, 
    \end{align*}
    given that $\frac{1}{18} + \frac{8}{3} \approx 2.7222 > 2.7183 \approx e$. Thus $ g(x, \lambda, D) \leq  f(x, \lambda, D)$ for all $(x, \lambda) \in [0,0.5]\times[0, 0.8]$, where
    \begin{align*}
        f(x, \lambda, D) =  \lp\lambda x + \pe(\lambda)x^2\rp^2\lp \frac{1}{2} + \frac{\frac{\lambda x}{D} - \pe(\lambda)x^2 }{6} + \frac{\lp \frac{\lambda x}{D} - \pe(\lambda)x^2 \rp^2}{18}\rp-\psi_E(\lambda)x^2.
    \end{align*}

\textit{Details of step 2.} Let us assume that $f(x, \lambda, 1) \leq 0$ for all $(x, \lambda) \in [0,0.5]\times[0, 0.8]$. We observe that  

\begin{align*}
    \frac{\partial f}{\partial D}(x, \lambda, D) =  - \frac{\lambda x}{D^2}\lp\lambda x + \pe(\lambda)x^2\rp^2\lp \frac{ 1}{6} + \frac{\frac{\lambda x}{D} - \pe(\lambda)x^2 }{9}\rp \leq 0,
\end{align*}
given that $\lba \frac{\lambda x}{D} - \pe(\lambda)x^2 \rba \leq 1$ implies
\begin{align*}
    \frac{ 1}{6} + \frac{\frac{\lambda x}{D} - \pe(\lambda)x^2 }{9} \geq 0.
\end{align*}

Hence, $f(x, \lambda, D)$ decreases with D. If $f(x, \lambda, 1)\leq0$, it follows that $f(x, \lambda, D) \leq 0$ for all $D \geq 1$.

\textit{Details of step 3.} It remains to prove that $f(x, \lambda, 1) \leq0$. Fix $\lambda \in (0, 0.8]$. We emphasize that $\pe$ is increasing and $\pe(\lambda) < 0.81$ on $(0, 0.8]$. Furthermore, $\lambda^2 \leq 2\pel$ for all $\lambda \in (0, 1]$. This may be seen noting that
\begin{align*}
    \pel = \sum_{k \geq 2} \frac{\lambda^k}{k} \geq \frac{\lambda^2}{2}, \quad \forall \lambda \geq 0.
\end{align*}
Moreover, $\pel \leq \frac{4}{3}\lambda^2$ for all $\lambda \in (0, 0.8]$. To see this, define the function $\eta(\lambda) = \pel-\frac{4}{3}\lambda^2$. Note that $\eta'(\lambda) = \frac{\lambda}{1 - \lambda} - \frac{8}{3}\lambda = \lambda (\frac{1}{1 - \lambda} - \frac{8}{3} )$ is nonpositive on $[0, 5/8]$ and nonnegative on $[5/8, 0.8]$. Thus, $\eta$ is nonincreasing on $[0, 2/3]$ and nondecreasing on $[2/3, 0.8]$. It suffices to check that $\eta(0) = 0$ and $\eta(0.8) \approx -0.04 \leq 0$ to conclude that $\eta \leq 0$ on $[0, 0.8]$.

 We denote $\tilde f(x) = f(x, \lambda, 1)/ x^2$. Note that $\tilde f \leq 0$ if, and only if, $f \leq 0$. The function $\tilde f(x)$ is a polynomial of degree $6$. We observe that 
 \begin{align*}
        9\tilde f''(x) &=   \lambda^4 + 3\lambda^2\pel\lp 1-4\pel x^2\rp 
        \\&\quad+ 3\pe^2(\lambda) \lp -3 \lambda x + 5\pe^2(\lambda)x^4 - 6\pel x^2 + 3 \rp 
        \\&\stackrel{(i)}{\geq}  \lambda^4  + 3\pe^2(\lambda) \lp -3 \lambda x - 6\pel x^2 + 3 \rp 
        \\&\stackrel{}{=}   \lambda^4  + 9\pe^2(\lambda) \lp - \lambda x - 2\pel x^2 + 1 \rp 
        \\&\stackrel{(ii)}{\geq}  \lambda^4 
        \\&\geq 0.
    \end{align*}
    where (i) is obtained given that $ 1-4\pel x^2 \geq 0$ and $5\pe^2(\lambda)x^4 \geq 0$ for $(x, \lambda) \in [0, 0.5]\times[0, 0.8]$, and (ii) is obtained in view of $ 1- \lambda x - 2\pel x^2 \geq 0$ for $(x, \lambda) \in [0, 0.5]\times[0, 0.8]$. Thus, $\tilde f$ is convex on $[0, 1/2]$. Based on convexity and $\tilde f(0) = 0$, it remains to check that $\tilde f(0.5) \leq 0$ to conclude that $\tilde f \leq 0$ on $[0, 0.5]$. That is, the function
    \begin{align*}
        h(\lambda) := \frac{f(0.5, \lambda, 1)}{0.5^2} = -\psi_E(\lambda) + \lp\lambda  + \frac{\pel}{2}\rp^2\lp \frac{1}{2} + \frac{\frac{\lambda}{2} - \frac{\pe(\lambda)}{4}}{6} + \frac{\lp \frac{\lambda}{2} - \frac{\pe(\lambda)}{4} \rp^2}{18}\rp.
    \end{align*}
    fulfils $h(\lambda) \leq 0$ for all $\lambda \in (0, 1/2]$.

\textit{Details of step 4.}  We observe that
\begin{align}
    \nonumber12h(\lambda) &= -12\psi_E(\lambda) + \lp\lambda  + \frac{\pel}{2}\rp^2\lp 6 + \lambda - \frac{\pe(\lambda)}{2} + \frac{2\lp \frac{\lambda}{2} - \frac{\pe(\lambda)}{4} \rp^2}{3}\rp
    \\\nonumber&=  \lp\lambda^2  + \lambda \pel+ \frac{\pe^2(\lambda)}{4} \rp\lp 6 + \lambda - \frac{\pe(\lambda)}{2} +  \frac{\lambda^2}{6} + \frac{\pe^2(\lambda)}{24} - \frac{\lambda \pel}{6}\rp
    \\& \quad-12\psi_E(\lambda) \nonumber
    \\\nonumber&= -12\psi_E(\lambda) + 6\lambda^2 + \lambda^3 - \frac{\lambda^2\pe(\lambda)}{2} +  \frac{\lambda^4}{6} + \frac{\lambda^2\pe^2(\lambda)}{24} - \frac{\lambda^3 \pel}{6}
    \\\nonumber&\quad  + 6\lambda\pel + \lambda^2\pel - \frac{\lambda\pe^2(\lambda)}{2} +  \frac{\lambda^3\pel}{6} + \frac{\lambda\pe^3(\lambda)}{24} - \frac{\lambda^2 \pe^2(\lambda)}{6}
    \\\nonumber&\quad  + \frac{3}{2}\pe^2(\lambda) + \frac{\lambda\pe^2(\lambda)}{4} - \frac{\pe^3(\lambda)}{8} +  \frac{\lambda^2\pe^2(\lambda)}{24} + \frac{\pe^4(\lambda)}{96} - \frac{\lambda\pe^3(\lambda)}{24}
     \\\nonumber&= \underbrace{-12\psi_E(\lambda) + 6\lambda^2 + \lambda^3 + 6\lambda \pel + \frac{\lambda^2\pe(\lambda)}{2} +  \frac{\lambda^4}{6} + \frac{3}{2}\pe^2(\lambda)}_{\text{(I)}}  
     \\\nonumber&\quad\underbrace{- \frac{\lambda\pe^2(\lambda)}{4} - \frac{\lambda^2 \pe^2(\lambda)}{12} - \frac{\pe^3(\lambda)}{8} + \frac{\pe^4(\lambda)}{96}}_{\text{(II)}}. 
\end{align}

Note that $\frac{\pe^3(\lambda)}{8} > \frac{\pe^4(\lambda)}{96}$, and so (II) is negative. Thus it remains to prove that (I) is nonpositive. In view of $\pel = \sum_{k \geq2}\lambda^k/k$, it follows that 
\begin{align*}
    \text{(I)} &= -12 \sum_{k \geq 2}\frac{\lambda^k}{k} + 6\lambda^2 + \lambda^3 + 6 \sum_{k \geq 3}\frac{\lambda^k}{k-1} + \frac{1}{2} \sum_{k \geq 4}\frac{\lambda^k}{k-2} +  \frac{\lambda^4}{6} + \frac{3}{2}\lp\sum_{k \geq 2}\frac{\lambda^k}{k} \rp^2 
    \\&\stackrel{}{=} -12 \sum_{k \geq 2}\frac{\lambda^k}{k} + 6\lambda^2 + \lambda^3 + 6 \sum_{k \geq 3}\frac{\lambda^k}{k-1} + \frac{1}{2} \sum_{k \geq 4}\frac{\lambda^k}{k-2} +  \frac{\lambda^4}{6} + \frac{3}{2}\lp\frac{\lambda^2}{2} + \sum_{k \geq 3}\frac{\lambda^k}{k} \rp^2 
    \\&\stackrel{}{=} -12 \sum_{k \geq 2}\frac{\lambda^k}{k} + 6\lambda^2 + \lambda^3 + 6 \sum_{k \geq 3}\frac{\lambda^k}{k-1} + \frac{1}{2} \sum_{k \geq 4}\frac{\lambda^k}{k-2} +  \frac{\lambda^4}{6} 
    \\&\quad+ \frac{3}{2}\lb\frac{\lambda^4}{4} + \lambda^2\sum_{k \geq 3}\frac{\lambda^k}{k} + \lp \sum_{k \geq 3}\frac{\lambda^k}{k} \rp^2\rb
    \\&\stackrel{(i)}{\leq} -12 \sum_{k \geq 2}\frac{\lambda^k}{k} + 6\lambda^2 + \lambda^3 + 6 \sum_{k \geq 3}\frac{\lambda^k}{k-1} + \frac{1}{2} \sum_{k \geq 4}\frac{\lambda^k}{k-2} +  \frac{\lambda^4}{6} 
    \\&\quad+ \frac{3}{2}\lb\frac{\lambda^4}{4} + \lambda^2\sum_{k \geq 3}\frac{\lambda^k}{k} + \frac{5}{6}\lambda^2\sum_{k \geq 3}\frac{\lambda^k}{k}\rb 
    \\&\stackrel{}{=} -12 \sum_{k \geq 2}\frac{\lambda^k}{k} + 6\lambda^2 + \lambda^3 + 6 \sum_{k \geq 3}\frac{\lambda^k}{k-1} + \frac{1}{2} \sum_{k \geq 4}\frac{\lambda^k}{k-2} +  \frac{\lambda^4}{6} 
    \\&\quad+ \frac{3}{2}\lb\frac{\lambda^4}{4} + \sum_{k \geq 5}\frac{\lambda^k}{k-2} + \frac{5}{6}\sum_{k \geq 5}\frac{\lambda^k}{k-2}\rb 
    \\&\stackrel{}{=} -12 \sum_{k \geq 2}\frac{\lambda^k}{k} + 6\lambda^2 + 4\lambda^3 + \lambda^4\lp 2+ \frac{1}{4}+ \frac{1}{6}  + \frac{3}{8}\rp + 6\sum_{k \geq 5}\frac{\lambda^k}{k-1} + \frac{13}{4}\sum_{k \geq 5}\frac{\lambda^k}{k-2}
    \\&\stackrel{}{=} -12 \sum_{k \geq 2}\frac{\lambda^k}{k} + 6\lambda^2 + 4\lambda^3 + \lambda^4\lp 3-\frac{5}{24}\rp + 6\sum_{k \geq 5}\frac{\lambda^k}{k-1} + \frac{13}{4}\sum_{k \geq 5}\frac{\lambda^k}{k-2}
    \\&\stackrel{}{=} -\frac{5}{24}\lambda^4 -12 \sum_{k \geq 5}\frac{\lambda^k}{k} + 6\sum_{k \geq 5}\frac{\lambda^k}{k-1} + \frac{13}{4}\sum_{k \geq 5}\frac{\lambda^k}{k-2}
    \\&\stackrel{(ii)}{\leq} -\frac{5}{24}\lambda^5 -12 \sum_{k \geq 5}\frac{\lambda^k}{k} + 6\sum_{k \geq 5}\frac{\lambda^k}{k-1} + \frac{13}{4}\sum_{k \geq 5}\frac{\lambda^k}{k-2}
    \\&\stackrel{}{=} \lambda^5\lp  -\frac{5}{24} - \frac{12}{5} + \frac{6}{4} + \frac{13}{12} \rp -12 \sum_{k \geq 6}\frac{\lambda^k}{k} + 6\sum_{k \geq 6}\frac{\lambda^k}{k-1} + \frac{13}{4}\sum_{k \geq 6}\frac{\lambda^k}{k-2}
    \\&\stackrel{}{=} -\frac{1}{40}\lambda^5 -12 \sum_{k \geq 6}\frac{\lambda^k}{k} + 6\sum_{k \geq 6}\frac{\lambda^k}{k-1} + \frac{13}{4}\sum_{k \geq 6}\frac{\lambda^k}{k-2}
    \\&\stackrel{(iii)}{\leq}  -\frac{1}{40}\lambda^6 -12 \sum_{k \geq 6}\frac{\lambda^k}{k} + 6\sum_{k \geq 6}\frac{\lambda^k}{k-1} + \frac{13}{4}\sum_{k \geq 6}\frac{\lambda^k}{k-2}
    \\&\stackrel{}{=} \lambda^6 \lp -\frac{1}{40} - \frac{12}{6} + \frac{6}{5} +\frac{13}{16} \rp + \sum_{k \geq 7}\frac{\lambda^k}{k}\lp -12 + 6\frac{k}{k-1} +\frac{13}{4}\frac{k}{k-2} \rp
    \\&\stackrel{}{=} -\frac{1}{80}\lambda^6 + \sum_{k \geq 7}\frac{\lambda^k}{k}\lp -12 + 6\frac{k}{k-1} +\frac{13}{4}\frac{k}{k-2} \rp
    \\&\stackrel{}{\leq} \sum_{k \geq 7}\frac{\lambda^k}{k}\lp -12 + 6\frac{k}{k-1} +\frac{13}{4}\frac{k}{k-2} \rp,
\end{align*}
where (i) is obtained based on $\sum_{k \geq 3}\frac{\lambda^k}{k} = \pel - \frac{\lambda^2}{2} \leq \frac{4}{3}\lambda^2-\frac{\lambda^2}{2}= \frac{5}{6}\lambda^2$ for all $\lambda \in [0, 0.8]$, and (ii) and (iii) are obtained given that $\lambda \geq \lambda^2$ for all $\lambda \in [0, 1]$. Now define 
\begin{align*}
    r: k \in \R \mapsto (2, \infty) = -12 + 6\frac{k}{k-1} +\frac{13}{4}\frac{k}{k-2}.
\end{align*}
The function $r$ is decreasing, and so it suffices to note that $r(7) = -0.45$ to conclude that $r(k) \leq 0$ for all $k \geq 7$, and thus 
\begin{align*}
     \sum_{k \geq 7}\lambda^k k\lp -12 + 6\frac{k}{k-1} +\frac{13}{4}\frac{k}{k-2} \rp \leq 0.
\end{align*}

\subsection{Proof of Lemma \ref{lemma:ineq_cosh_sinh}}
    Define $f(x, y) = \cosh(y + x) - x \sinh(y+x) - \cosh(y)$. It follows that, for every $y$,
    \begin{align*}
        \frac{d}{dx}f(x, y) = - x \cosh(y + x)
    \end{align*}
    has the opposite sign of $x$, so the function decreases for $x \geq 0$ and increases for $x \leq 0$. Hence, its maximum is achieved at $0$. Further, $f(0, y) = 0$, and we conclude the result.

  \subsection{Proof of Lemma~\ref{lemma:conditional_to_f}}

    We prove $(iii)$ implies $(ii)$, $(ii)$ implies $(i)$, and $(i)$ implies $(iii)$, in order.
    
    $(iii) \implies (ii)$: Let $f$ be a nonnegative bounded measurable function. Given that $\R^{t-1}$ is locally compact, and both $\R^{t-1}$  and $\R$ are separable Banach spaces, Lusin's theorem \cite{lusin1912, bauldry2009introduction} yields that
        for any $\delta > 0$, there exists a continuous function $f_\delta$ such that $f_\delta = f$ on $A$ with $\Pb(A) \geq 1 - \delta$, and $\|f_\delta \|_\infty \leq \|f\|_\infty$.
    Without loss of generality, $f_\delta$ is nonnegative (otherwise, set the negative values to $0$).  By a small abuse of notation, we denote $f = f(W_1, \ldots, W_{t-1})$ and $f_\delta = f_\delta(W_1, \ldots, W_{t-1})$. For any $\delta > 0$, 
    \begin{align*}
        \E\lb W_t f \rb &= \E\lb W_t f \ind(f = f_\delta)\rb +  \E\lb W_t f \ind(f \neq f_\delta)\rb 
        \\&= \E\lb W_t f_\delta \ind(f = f_\delta)\rb +  \E\lb W_t f \ind(f \neq f_\delta)\rb  
        \\&= \E\lb W_t f_\delta \rb - \E\lb W_t f_\delta \ind(f \neq f_\delta)\rb +  \E\lb W_t f \ind(f \neq f_\delta)\rb
        \\&\leq - \E\lb W_t f_\delta \ind(f \neq f_\delta)\rb +  \E\lb W_t f \ind(f \neq f_\delta)\rb,
    \end{align*}
    where the inequality is obtained given that $f_\delta$ is a nonnegative continuous bounded function. We observe that
    \begin{align*}
        \lba- \E\lb W_t f_\delta \ind(f \neq f_\delta)\rb +  \E\lb W_t f \ind(f \neq f_\delta)\rb \rba &\leq \lp \|f_\delta\|_\infty + \|f\|_\infty\rp \E\lb |W_t| \ind(f \neq f_\delta) \rb
        \\&\leq 2 \|f\|_\infty \E\lb |W_t| \ind(f \neq f_\delta) \rb.
    \end{align*}
    Furthermore, there exists a sequence $(\delta_k)_{k\geq1}$ such that $\E\lb |W_t| \ind(f \neq f_{\delta_k}) \rb$ converges to zero as $k \to \infty$.  This may be seen by applying the dominated convergence theorem with the dominating integrable function $|W_t|$ to a sequence $( |W_t| \ind(f \neq f_{\delta_k}))_{k \geq 1}$ that converges almost surely to 0, whose existence is derived from the convergence in probability of $|W_t| \ind(f \neq f_\delta)$ to 0 as $\delta \downarrow 0$. We conclude that
    \begin{align*}
        \E\lb W_t f \rb &\leq  \lim_{k \to \infty}   \lc 2 \|f\|_\infty \E\lb |W_t| \ind(f \neq f_{\delta_k}) \rb \rc = 0.
    \end{align*}

    $(ii) \implies (i)$: Denote $\F_{t-1} = \sigma(W_1, \ldots, W_{t-1})$. Take 
    \begin{align*}
        f(W_1, \ldots, W_{t-1}) = \ind\lp \E\lb W_t | \F_{t-1}\rb   > 0 \rp.
    \end{align*}
    Note that 
    \begin{align*}
        \E \lb W_t f(W_1, \ldots, W_{t-1}) \rb &=  \E \lb W_t \ind\lp \E\lb W_t | \F_{t-1}\rb   > 0 \rp  \rb
        \\&=  \E\lb\E \lb W_t \ind\lp \E\lb W_t | \F_{t-1}\rb   > 0 \rp   | \F_{t-1}\rb\rb
        \\&=  \E\lb \ind\lp \E\lb W_t | \F_{t-1} \rb  > 0 \rp  \E \lb W_t   | \F_{t-1}\rb\rb
    \end{align*}
    is nonpositive by (ii). Given that $\ind\lp \E\lb W_t | \F_{t-1}\rb   > 0 \rp  \E \lb W_t   | \F_{t-1}\rb$ is nonnegative, and its expectation is nonpositive, it has to be zero almost surely. That is, $\E\lb W_t |\F_{t-1} \rb \leq 0$ almost surely.

    $(i) \implies (iii)$: From the tower property, 
    \begin{align*}
        \E\lb W_t f(W_1, \ldots, W_{t-1}) \rb &= \E\lb\E\lb W_t f(W_1, \ldots, W_{t-1}) | \sigma(W_1, \ldots, W_{t-1})\rb\rb
        \\&= \E\lb f(W_1, \ldots, W_{t-1}) \E\lb W_t  | \sigma(W_1, \ldots, W_{t-1})\rb\rb
        \\&\leq 0,
    \end{align*}
    where the inequality is obtained given that $\E\lb W_t  | \sigma(W_1, \ldots, W_{t-1})\rb \leq 0$ and $f$ is nonnegative.

\section{Alternative proof of Theorem \ref{theorem:main_theorem} restricted to separable real Hilbert spaces} \label{appendix:main_proof_HS}

The process \eqref{eq:main_process} is nonnegative given that both the $\exp$ and $\cosh$ functions are nonnegative, so all we have to prove is that such a process is a supermartingale. Let us denote
\begin{align*}
    Y_i = \frac{X_i - \mu}{4B}, \quad \delta_t = \frac{\bar\mu_t - \mu}{4B},  \quad Z_i = Y_i - \delta_{i-1},
\end{align*}
as well as
\begin{align*}
    M_t = \sum_{i = 1}^t \lambda_i \frac{X_i- \mu}{4B}, \quad R_t = \exp \lp - \sum_{i = 1}^t \frac{\psi_E(\lambda_i)}{(4B)^2} \| X_i - \bar\mu_{i-1} \|^2  \rp , \quad A_t = M_{t-1} + \lambda_t \delta_{t-1}.
\end{align*}

We define
\begin{align*}
    \varphi_t(\beta) &= R_{t-1} \cosh \ldba M_{t-1} +{\lambda_t} \delta_{t-1} +  \beta {\lambda_t} \lp\frac{X_t - \mu}{4B} - \delta_{t-1}\rp \rdba
    \\&\quad \times\exp \lp -\beta\frac{\psi_E(\lambda_t)}{(4B)^2} \| X_t - \bar\mu_{t-1} \|^2\rp
    \\& = R_{t-1} \cosh \ldba M_{t-1}+{\lambda_t}\delta_{t-1} + \beta {\lambda_t} \lp Y_t - \delta_{t-1} \rp \rdba \exp \lp-\beta\psi_E(\lambda_t) \| Y_t - \delta_{t-1} \|^2\rp
    \\& = R_{t-1} \cosh \ldba A_{t} + \beta {\lambda_t} Z_t \rdba \exp \lp-\beta\psi_E(\lambda_t) \| Z_t \|^2\rp.
\end{align*}
Note that, in order to conclude the result, it suffices to show that 
\begin{equation}\label{eq:suffice-to-show}
\E_{t-1}[\varphi_t(1)] \leq R_{t-1} \cosh  \| M_{t-1} \|.
\end{equation}

Assume for now that 
\begin{equation}\label{eq:simplfying-assumption}
A_{t} + \beta {\lambda_t} Z_t \neq 0 \text{ almost surely for all }\beta \in [0, 1].
\end{equation}
Since $\varphi_t$ is smooth on $[0,1]$, Taylor's theorem yields
\begin{align}\label{eq:taylor-hilbert}
    \varphi_t(1) = \varphi_t(0) + \varphi_t'(0) + \int_0^1 (1-\beta)\varphi_t''(\beta)d\beta.
\end{align}
We now analyze the three terms separately. First,   $\varphi_t(0) = R_{t-1}\cosh \ldba A_t \rdba$. For the second term, note that\footnote{For all $x \in H \backslash \{ 0 \}$, the Fréchet derivative and the second order Fréchet derivative of $\| \cdot \|$ at $x$ are denoted by $\|x \|'$ and $\|x \|''$ respectively. These Fréchet derivatives are operators fulfilling
\begin{align*}
    \| x \|'(\Delta) = \la \frac{x}{\|x\|}, \Delta\ra, \quad \| x \|''(\Delta, \Delta) = \frac{1}{\|x\|} \lp \|\Delta\|^2 - \la \Delta, \frac{x}{\|x\|}\ra^2 \rp,
\end{align*}
for all $x \in H \backslash \{ 0 \}$ and all $ \Delta \in H$.}
\begin{align*}
\frac{\varphi_t'(\beta)}{R_{t-1}}&= \frac{d}{d\beta} \lp  \cosh \ldba A_t + \beta {\lambda_t} Z_t \rdba \exp \lp-\beta\psi_E(\lambda_t) \| Z_t \|^2\rp \rp
\\&=  \exp \lp-\beta\psi_E(\lambda_t) \| Z_t \|^2\rp  \sinh \ldba A_t + \beta \lambda_t Z_t \rdba \la \frac{A_t  + \beta \lambda_t Z_t}{\left\| A_t  + \beta \lambda_t Z_t \right\|}, \lambda_t Z_t\ra  
\\&\quad \quad \quad -   \psi_E(\lambda_t) \| Z_t \|^2\cosh \ldba A_t + \beta \lambda_t Z_t \rdba  \exp \lp-\beta\psi_E(\lambda_t) \| Z_t \|^2\rp.
\end{align*}
Hence, the second term in~\eqref{eq:taylor-hilbert} evaluates to
\begin{align*}
    \frac{\varphi_t'(0)}{R_{t-1}}&=  \sinh \ldba A_t  \rdba \la \frac{{A_t}}{\left\| A_t  \right\|}, \lambda_t Z_t\ra  - \psi_E(\lambda_t) \| Z_t \|^2\cosh \ldba A_t \rdba .
\end{align*}
For the third term in~\eqref{eq:taylor-hilbert}, we calculate
\begin{align*}
\frac{\varphi_t''(\beta)}{R_{t-1}} \exp \lp\beta\psi_E(\lambda_t) \| Z_t \|^2\rp &=   \cosh \ldba A_t  + \beta \lambda_t Z_t \rdba \la \frac{A_t  + \beta \lambda_t Z_t}{\left\| A_t  + \beta \lambda_t Z_t \right\|}, \lambda_t Z_t\ra^2 
\\&\quad + \frac{\sinh \ldba A_t  + \beta \lambda_t Z_t \rdba}{\left\| A_t  + \beta \lambda_t Z_t \right\|} \la  \lambda_t Z_t, \lambda_t Z_t\ra 
\\&\quad - \frac{\sinh \ldba A_t  + \beta \lambda_t Z_t \rdba}{\left\| A_t  + \beta \lambda_t Z_t \right\|} \la \frac{A_t  + \beta \lambda_t Z_t}{\left\| A_t  + \beta \lambda_t Z_t \right\|}, \lambda_t Z_t\ra^2 
\\&\quad -  2\psi_E(\lambda_t) \| Z_t \|^2\sinh \ldba A_t  + \beta \lambda_t Z_t \rdba 
\\&\quad\quad\times\la \frac{A_t  + \beta \lambda_t Z_t}{\left\| A_t  + \beta \lambda_t Z_t \right\|}, \lambda_t Z_t\ra
\\&\quad +  \lp\psi_E(\lambda_t) \| Z_t \|^2\rp^2 \cosh \ldba A_t  + \beta \lambda_t Z_t \rdba.
\end{align*}
For $u \geq 0$, it holds that $\sinh u \leq u \cosh u$. Given that $ \la  \lambda_t Z_t, \lambda_t Z_t\ra - \la \frac{A_t  + \beta \lambda_t Z_t}{\left\| A_t  + \beta \lambda_t Z_t \right\|}, \lambda_t Z_t\ra^2 \geq 0$, the first three terms on the right hand side can be collapsed to yield
\begin{align*}
    \frac{\varphi_t''(\beta)}{R_{t-1}} \exp \lp\beta\psi_E(\lambda_t) \| Z_t \|^2\rp &\leq \cosh \ldba A_t  + \beta \lambda_t Z_t \rdba \la  \lambda_t Z_t, \lambda_t Z_t\ra 
\\&\quad -  2\psi_E(\lambda_t) \| Z_t \|^2\sinh \ldba A_t  + \beta \lambda_t Z_t \rdba 
\\&\quad\quad\times\la \frac{A_t  + \beta \lambda_t Z_t}{\left\| A_t  + \beta \lambda_t Z_t \right\|}, \lambda_t Z_t\ra
\\&\quad +  \lp\psi_E(\lambda_t) \| Z_t \|^2\rp^2 \cosh \ldba A_t  + \beta \lambda_t Z_t \rdba.
\end{align*}
Since $|\sinh u| \leq \cosh u$ for all $u \in \R$, and $\lba \la \frac{A_t  + \beta \lambda_t Z_t}{\left\| A_t  + \beta \lambda_t Z_t \right\|}, \lambda_t Z_t\ra\rba \leq \ldba \lambda_t Z_t \rdba$ for all $\beta \in [0,1]$, the second term further simplifies to yield
\begin{align*}
    \frac{\varphi_t''(\beta)}{R_{t-1}} \exp \lp\beta\psi_E(\lambda_t) \| Z_t \|^2\rp &\leq \cosh \ldba A_t  + \beta \lambda_t Z_t \rdba \la  \lambda_t Z_t, \lambda_t Z_t\ra 
\\&\quad +  2\psi_E(\lambda_t) \| Z_t \|^2\ldba \lambda_t Z_t \rdba \cosh \ldba A_t  + \beta \lambda_t Z_t \rdba  
\\&\quad +  \lp\psi_E(\lambda_t) \| Z_t \|^2\rp^2 \cosh \ldba A_t  + \beta \lambda_t Z_t \rdba
\\&= \cosh \ldba A_t  + \beta \lambda_t Z_t \rdba \lp \psi_E(\lambda_t) \| Z_t \|^2 +  \ldba \lambda_t Z_t \rdba  \rp^2
\\&\leq \cosh \ldba A_t  \rdba \exp \lp \ldba \beta \lambda_t Z_t \rdba \rp\lp \psi_E(\lambda_t) \| Z_t \|^2 +  \ldba \lambda_t Z_t \rdba  \rp^2.
\end{align*}
Since $\int_0^1 (1-\beta) e^{a\beta} d\beta = \frac{e^a - a - 1}{a^2}$,  the third term in~\eqref{eq:taylor-hilbert} (divided by $R_{t-1}$) equals
\begin{align}
    \frac{\int_0^1 (1 - \beta)\varphi_t''(\beta)d\beta}{R_{t-1}} &\leq 
    \cosh \ldba A_t  \rdba \lp \psi_E(\lambda_t) \| Z_t \|^2 +  \ldba \lambda_t Z_t \rdba  \rp^2\label{eq:big_integral}
    \\&\quad \times\int_0^1 (1 - \beta)  \exp \lp \beta \ldba \lambda_t Z_t \rdba  - \beta\psi_E(\lambda_t) \| Z_t \|^2\rp d\beta \nonumber
    \\&= \cosh \ldba A_t  \rdba \lp \psi_E(\lambda_t) \| Z_t \|^2 +  \ldba \lambda_t Z_t \rdba  \rp^2  \nonumber
    \\&\quad\times\frac{  \exp \lp \ldba \lambda_t Z_t \rdba  - \psi_E(\lambda_t) \| Z_t \|^2\rp - \lp \ldba \lambda_t Z_t \rdba  - \psi_E(\lambda_t) \| Z_t \|^2\rp - 1}{\lp \ldba \lambda_t Z_t \rdba  - \psi_E(\lambda_t) \| Z_t \|^2\rp^2} \nonumber
    \\&\stackrel{(i)}{\leq} \cosh \ldba A_t  \rdba \pel \| Z_t \|^2, \nonumber
\end{align}
where (i) follows from Lemma~\ref{lemma:main_ineq} with $D=1$. 

Finally, combining the three pieces of~\eqref{eq:taylor-hilbert} derived above, we get
\begin{align*}
    \frac{\varphi_t(1)}{ R_{t-1}} &= \frac{\varphi_t(0) + \varphi_t'(0) + \int_0^1 (1-\beta)\varphi_t''(\beta)d\beta}{R_{t-1}}
     \\&\leq \cosh \ldba A_t \rdba + \sinh \ldba A_t  \rdba \la \frac{{A_t}}{\left\| A_t  \right\|}, \lambda_t Z_t\ra  - \psi_E(\lambda_t) \| Z_t \|^2\cosh \ldba A_t \rdba 
     \\&\quad+ \psi_E(\lambda_t) \| Z_t \|^2\cosh \ldba A_t \rdba
     \\&= \cosh \ldba A_t \rdba + \sinh \ldba A_t  \rdba \la \frac{{A_t}}{\left\| A_t  \right\|}, \lambda_t Z_t\ra.
\end{align*}

Note that $\delta_{t-1}$  is simply $M_{t-1}$ re-scaled by a positive factor, and so they are both aligned to $A_t = M_{t-1} + \lambda_t \delta_{t-1}$. Thus,
\begin{align*}
    \E_{t-1}\lb \la \frac{A_t}{\left\| A_t  \right\|}, \lambda_t Z_t\ra\rb &= \E_{t-1}\lb \la \frac{A_t}{\left\| A_t  \right\|}, \lambda_t (Y_t - \delta_{t-1})\ra\rb
    \\&= \E_{t-1}\lb \la \frac{A_t}{\left\| A_t  \right\|}, \lambda_t Y_t\ra\rb -  \la \frac{A_t}{\left\| A_t  \right\|}, \lambda_t \delta_{t-1}\ra
    \\&=  \la \frac{A_t}{\left\| A_t \right\|}, \lambda_t \E_{t-1}\lb Y_t\rb\ra - \ldba \lambda_t \delta_{t-1} \rdba 
    \\&= 0 - \ldba \lambda_t \delta_{t-1} \rdba 
    \\&= -\ldba \lambda_t \delta_{t-1} \rdba.
\end{align*}
Furthermore, this also implies that $\ldba M_{t-1}+\lambda_t \delta_{t-1}  \rdba = \ldba M_{t-1}\rdba+\ldba\lambda_t \delta_{t-1} \rdba$. Hence 
\begin{align*}
    -\ldba \lambda_t \delta_{t-1} \rdba\sinh \ldba A_t  \rdba &= -\ldba \lambda_t \delta_{t-1} \rdba\sinh \ldba M_{t-1}+\lambda_t \delta_{t-1}  \rdba
    \\&=  -\ldba \lambda_t \delta_{t-1} \rdba\sinh \lp \ldba M_{t-1}\rdba+\ldba \lambda_t\delta_{t-1}  \rdba\rp    
    \\&\stackrel{(i)}{\leq}  \cosh \ldba M_{t-1}  \rdba -  \cosh \lp \ldba M_{t-1}\rdba+\ldba \lambda_t\delta_{t-1}  \rdba\rp  
    \\&\stackrel{}{=}  \cosh \ldba M_{t-1}  \rdba -  \cosh \ldba M_{t-1}+\lambda_t \delta_{t-1}  \rdba
    \\&\stackrel{}{=}  \cosh \ldba M_{t-1}  \rdba -  \cosh \ldba A_t  \rdba,
\end{align*}
where (i) is obtained in view of Lemma \ref{lemma:ineq_cosh_sinh}.
Recalling our simplifying assumption~\eqref{eq:simplfying-assumption}, we conclude that
\begin{align*}
    \frac{\E_{t-1}[\varphi_t(1)]}{R_{t-1}} &\leq \cosh \ldba A_{t}  \rdba + \cosh \ldba M_{t-1}  \rdba -  \cosh \ldba A_t  \rdba
    \\& = \cosh \ldba M_{t-1}  \rdba, 
\end{align*}
establishing~\eqref{eq:suffice-to-show}, and completing the proof. 

Thus, it only remains to avoid assumption~\eqref{eq:simplfying-assumption}. There are challenges arising from the existence of $\beta^* \in [0, 1]$ such that $A_{t} + \beta^* {\lambda_t} Z_t = 0$, but these can be circumvented. 

First, let us consider the case that $A_t = 0$ (and so $M_{t-1} = \delta_{t-1} = 0$). If so,
\begin{align*}
    \varphi_t(\beta) &= R_{t-1} \cosh \ldba A_{t} + \beta {\lambda_t} Z_t \rdba \exp \lp-\beta\psi_E(\lambda_t) \| Z_t \|^2\rp
    \\&= R_{t-1} \cosh  \lp\beta {\lambda_t} \ldba Y_t \rdba\rp \exp \lp-\beta\psi_E(\lambda_t) \| Y_t \|^2\rp.
\end{align*}
Now observe that the Taylor series of $\varphi_t$ does not require Fréchet differentiability of the norm in this case, since  $\beta$ appears outside of the norm. The arguments provided assuming~\eqref{eq:simplfying-assumption} similarly apply, with no need of Lemma \ref{lemma:ineq_cosh_sinh}, thus completing the proof in this case too. 

We finally consider the case that $A_{t} \neq 0$. In this case, $\beta^*$, if it exists, is unique. We break this case up into two further subcases: when $\beta^* = 1$ and when $\beta^* \in (0,1)$.

If $\beta^* = 1$, then by continuity of $\varphi_t$ on $[0, 1]$,
\begin{align*}
    \varphi_t(1) &= \lim_{\rho \uparrow 1} \varphi_t(\rho)
    \\&= \lim_{\rho \uparrow 1} \lb \varphi_t(0) + \varphi_t'(0) + \int_0^\rho (1-\beta)\varphi_t''(\beta)d\beta \rb
    \\&=  \varphi_t(0) + \varphi_t'(0) + \lim_{\rho \uparrow 1} \lb\int_0^\rho (1-\beta)\varphi_t''(\beta)d\beta \rb.
\end{align*}

It follows that
\begin{align*}
    \int_0^\rho (1 - \beta)\varphi_t''(\beta)d\beta &\leq \cosh \ldba A_t  \rdba \lp \psi_E(\lambda_t) \| Z_t \|^2 +  \ldba \lambda_t Z_t \rdba  \rp^2 
    \\&\quad\times\int_0^\rho (1 - \beta)  \exp \lp \beta \ldba \lambda_t Z_t \rdba  - \beta\psi_E(\lambda_t) \| Z_t \|^2\rp d\beta
    \\&\leq \cosh \ldba A_t  \rdba \lp \psi_E(\lambda_t) \| Z_t \|^2 +  \ldba \lambda_t Z_t \rdba  \rp^2
     \\&\quad\times\int_0^1 (1 - \beta)  \exp \lp \beta \ldba \lambda_t Z_t \rdba  - \beta\psi_E(\lambda_t) \| Z_t \|^2\rp d\beta,
\end{align*}
which is precisely the upper bound in \eqref{eq:big_integral}. Thus the arguments provided assuming~\eqref{eq:simplfying-assumption} also apply to this case. 

It remains to consider the case $\beta^* \in (0,1)$. By continuity of $\varphi_t$,
\begin{align*}
    \varphi_t(1) = \lim_{\rho \downarrow 0} \lb \varphi_t(1) - \varphi_t(\beta^* + \rho) + \varphi_t(\beta^* - \rho) \rb.
\end{align*}
By a Taylor expansion argument,
\begin{align*}
    \varphi_t(1) - \varphi_t(\beta^* + \rho) &= \varphi_t'(\beta^* + \rho)(1 - \beta^* - \rho) + \int_{\beta^* + \rho}^1 (1-\beta) \varphi_t''(\beta) d\beta,
    \\\varphi_t(\beta^* - \rho) &=\varphi_t(0)+ \varphi_t'(0)(\beta^* - \rho) + \int_{0}^{\beta^* - \rho} (\beta^* - \rho-t) \varphi_t''(\beta) d\beta
    \\&= \varphi_t(0)+\varphi_t'(0)(\beta^* - \rho)  \\&\quad+ \int_{0}^{\beta^* - \rho} (1-\beta) \varphi_t''(\beta) d\beta - (1-\beta^* + \rho)\int_{0}^{\beta^* - \rho}  \varphi_t''(\beta) d\beta
    \\&= \varphi_t(0)+\varphi_t'(0)(\beta^* - \rho)  \\&\quad+ \int_{0}^{\beta^* - \rho} (1-\beta) \varphi_t''(\beta) d\beta - (1-\beta^* +\rho) \lp \varphi_t'(\beta^* - \rho) - \varphi_t'(0) \rp
    \\&= \varphi_t(0)+ \varphi_t'(0) + \int_{0}^{\beta^* - \rho} (1-\beta) \varphi_t''(\beta) d\beta - (1-\beta^* +\rho)  \varphi_t'(\beta^* - \rho).
\end{align*}
We remind the reader that, for all $\beta \in [0, 1] \backslash \{\beta^*\}$,
\begin{align*}
\frac{\varphi_t'(\beta)}{R_{t-1}}\exp \lp-
\beta\psi_E(\lambda_t) \| Z_t \|^2\rp&=    \sinh \ldba A_t + \beta \lambda_t Z_t \rdba \la \frac{A_t  + \beta \lambda_t Z_t}{\left\| A_t  + \beta \lambda_t Z_t \right\|}, \lambda_t Z_t\ra  
  \\&\quad - \psi_E(\lambda_t) \| Z_t \|^2\cosh \ldba A_t + \beta \lambda_t Z_t \rdba,
\end{align*}
 so both $\lim_{\beta \downarrow \beta^*}\varphi_t'(\beta) $ and $ \lim_{\beta \uparrow \beta^*}\varphi_t'(\beta)$ coincide, and are equal to
 \begin{align*}
      -R_{t-1} \psi_E(\lambda_t) \| Z_t \|^2\cosh \ldba A_t + \beta^* \lambda_t Z_t \rdba\exp \lp
\beta^*\psi_E(\lambda_t) \| Z_t \|^2\rp,
 \end{align*}
 which implies
\begin{align*}
    \lim_{\rho \downarrow 0} \lb \varphi_t'(\beta^* + \rho)(1 - \beta^* - \rho) - (1-\beta^* +\rho)  \varphi_t'(\beta^* - \rho)\rb = 0.
\end{align*}
It follows that $\varphi_t(1) = \lim_{\rho \downarrow 0} \lb \varphi_t(1) - \varphi_t(\beta^* + \rho) + \varphi_t(\beta^* - \rho) \rb$ is upper bounded by
\begin{align*}
     \limsup_{\rho \downarrow 0} \lb\varphi_t(0)+ \varphi_t'(0) + \int_{0}^{\beta^* - \rho} (1-\beta) \varphi_t''(\beta) d\beta +\int_{\beta^* + \rho}^{1} (1-\beta) \varphi_t''(\beta) d\beta\rb.
\end{align*}
Lastly, $\int_{0}^{\beta^* - \rho} (1-\beta) \varphi_t''(\beta) d\beta +\int_{\beta^* + \rho}^{1} (1-\beta) \varphi_t''(\beta) d\beta$ is again upper bounded by 
\begin{align*}
     \cosh \ldba A_t  \rdba \lp \psi_E(\lambda_t) \| Z_t \|^2 +  \ldba \lambda_t Z_t \rdba  \rp^2 \int_0^1 (1 - \beta)  \exp \lp \beta \ldba \lambda_t Z_t \rdba  - \beta\psi_E(\lambda_t) \| Z_t \|^2\rp d\beta,
\end{align*}
which coincides with \eqref{eq:big_integral}. Thus, the arguments provided for  $A_{t} + \beta {\lambda_t} Z_t \neq 0$ analogously extend to this case.

Thus, having circumvented the assumption~\eqref{eq:simplfying-assumption}, the proof of the theorem is now completed.

\section{Remaining proofs}

\subsection{Proof of Corollary~\ref{cor:empirical_bernstein}} \label{section:proof_main_corollary}

Extend $S_t$ to $t = 0$ with $\lambda_0 = 0$ and $X_0 = 0$. It follows that $S_t$ is a nonnegative supermartingale with $S_0 = 1$. Consequently, Ville's inequality (Fact \ref{fact:villesineq}) yields 
\begin{align*}
    \Pb \lp \sup_{t } S_t \geq \frac{1}{\alpha} \rp \leq \E[S_0] \alpha = \alpha.
\end{align*}

Given that $e^u \leq 2 \cosh(u)$ for all $u \in \R$, it follows from Theorem \ref{theorem:main_theorem} that 
\begin{align*}
    \frac{1}{2}\exp \lp \ldba \sum_{i = 1}^t \lambda_i \frac{X_i- \mu}{4BD} \rdba \rp \exp \lp - \sum_{i = 1}^t \frac{\psi_E(\lambda_i)}{(4B)^2} \| X_i - \bar\mu_{i-1} \|^2  \rp
\end{align*}
is dominated by $S_t$. Thus, with probability $1-\alpha$, and simultaneously for all $t \geq 1$,
\begin{align*}
    \frac{1}{2}\exp \lp \ldba \sum_{i = 1}^t \lambda_i \frac{X_i- \mu}{4BD} \rdba \rp \exp \lp - \sum_{i = 1}^t \frac{\psi_E(\lambda_i)}{(4B)^2} \| X_i - \bar\mu_{i-1} \|^2  \rp \leq \frac{1}{\alpha}.
\end{align*}

Taking logarithms and multiplying both sides by $D\frac{4B}{\sum_{i = 1}^t \lambda_i}$, it follows that
\begin{align*}
        \ldba \frac{\sum_{i=1}^{t} \lambda_i X_i}{\sum_{i=1}^{t} \lambda_i} - \mu \rdba \leq D\frac{\frac{1}{4B} \sum_{i=1}^t \psi_E(\lambda_i) \ldba X_i - \bar\mu_{i-1} \rdba^2 + 4B\log \frac{2}{\alpha}}{\sum_{i=1}^{t} \lambda_i}.
    \end{align*}

\subsection{Proof of Proposition \ref{proposition:radius_ball_iid}} \label{proof:radius_ball_iid}

Our proof follows very similar steps to \cite{waudby2024estimating}, Appendix E.3, and \cite{chugg2023time}, Appendix F.1, which prove comparable results for the univariate and multivariate case respectively. We start by presenting a series of lemmas that are exploited in the proof. Analogous versions of Lemma \ref{lemma:sum_R_converges} and Lemma \ref{lemma:sum_lambdas_converges} may be found in such contributions. Throughout, we denote
\begin{align*}
        \lambda_t(\kappa) := \sqrt{\frac{\kappa \log (\frac{2}{\alpha})}{n\hat\sigma_{t-1}^2}}.
    \end{align*}

\begin{lemma} \label{lemma:sequence_real_numbers}
    For any real sequence $(a_n)_{n = 1}^\infty$  such that $a_n \to a$, we have
    \(
        \frac{1}{n} \sum_{i = 1}^n a_i \to a.
    \)
\end{lemma}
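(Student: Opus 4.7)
The statement is the classical Cesàro mean theorem: convergence of a sequence implies convergence of its running averages to the same limit. The plan is to use a standard $\epsilon/2$ splitting argument.

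First I would fix $\epsilon > 0$ and, using the hypothesis $a_n \to a$, choose $N$ large enough that $|a_i - a| < \epsilon/2$ for all $i \geq N$. Writing
\begin{align*}
    \frac{1}{n} \sum_{i = 1}^n a_i - a = \frac{1}{n} \sum_{i = 1}^n (a_i - a) = \frac{1}{n} \sum_{i = 1}^{N-1} (a_i - a) + \frac{1}{n} \sum_{i = N}^{n} (a_i - a),
\end{align*}
I would then control the two pieces separately. The second sum has at most $n$ terms each bounded by $\epsilon/2$ in absolute value, giving a contribution of at most $\epsilon/2$. The first sum is a fixed (finite) quantity $C_N := \sum_{i=1}^{N-1}(a_i - a)$ depending only on $N$, so $|C_N|/n \to 0$ as $n \to \infty$; in particular, there exists $n_0 \geq N$ such that $|C_N|/n < \epsilon/2$ for all $n \geq n_0$.

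Combining the two bounds yields $\left| \frac{1}{n} \sum_{i=1}^n a_i - a \right| < \epsilon$ for all $n \geq n_0$. Since $\epsilon$ was arbitrary, the desired convergence follows. There is no real obstacle here — this is a textbook result, included only as a stepping stone for the subsequent asymptotic analysis of $\lambda_t(\kappa)$, $\hat\sigma_{t-1}^2$, and the radius in Proposition~\ref{proposition:radius_ball_iid}.
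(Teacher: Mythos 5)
Your proof is correct and is the standard Ces\`aro mean argument (fix $\epsilon$, split the sum at a threshold $N$ beyond which $|a_i - a| < \epsilon/2$, and let $n \to \infty$ so the fixed initial contribution vanishes). The paper states this lemma without proof, treating it as a classical fact, so there is no paper proof to compare against; your write-up fills that gap cleanly and would serve as a complete proof if one were desired.
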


\begin{lemma} \label{lemma:sigma_sigmahat}
    $\hat\sigma_n^2$ converges to $\sigma^2$ almost surely.
\end{lemma}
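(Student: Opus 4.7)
The plan is to decompose
\[
\hat\sigma_n^2 = \frac{c_2 B^2}{n+1} + \frac{1}{n+1}\sum_{i=1}^n \|X_i - \bar\mu_{i-1}\|^2,
\]
and handle the two summands separately. The first vanishes deterministically. For the second, the triangle inequality together with $\|X_i - \mu\| \leq 2B$ and $\|\bar\mu_{i-1}-\mu\| \leq 2B$ (since $\bar\mu_{i-1}$ is a convex combination of the $X_j$'s, all of which have norm at most $B$) gives
\[
\big| \|X_i - \bar\mu_{i-1}\|^2 - \|X_i - \mu\|^2 \big| \leq \|\bar\mu_{i-1}-\mu\|\big(2\|X_i - \mu\| + \|\bar\mu_{i-1}-\mu\|\big) \leq 6B \|\bar\mu_{i-1} - \mu\|.
\]
It therefore suffices to establish (a) $\frac{1}{n}\sum_{i=1}^n \|X_i-\mu\|^2 \to \sigma^2$ almost surely, and (b) $\frac{1}{n}\sum_{i=1}^n \|\bar\mu_{i-1} - \mu\| \to 0$ almost surely. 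Part (a) is Kolmogorov's SLLN applied to the iid real random variables $\|X_i - \mu\|^2$, each bounded by $4B^2$.

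For part (b), the plan is to treat $M_i := \sum_{j=1}^i \lambda_j (X_j - \mu)$ as a Banach-valued martingale with bounded increments $\|\lambda_j(X_j - \mu)\| \leq 2Bc_1$ and invoke Pinelis' Bernstein-type inequality from Section~\ref{section:scalar_ideas}. Using the crude bounds $\hat\sigma_j^2 \leq (c_2 + 4)B^2$ (from $\|X_j - \bar\mu_{j-1}\| \leq 2B$) and $\lambda_j \leq c_1$, the weights $\lambda_j = \lambda_j^{\text{CI}}$ satisfy $\lambda_j \asymp 1/\sqrt{n}$ once $n$ is sufficiently large, so $\sum_{j=1}^i \lambda_j \gtrsim i/\sqrt{n}$ and $\sum_{j=1}^i \lambda_j^2 \lesssim i/n$. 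Pinelis' Bernstein then gives $\|\bar\mu_i - \mu\| = O(1/\sqrt{i})$ with exponentially small failure probability; summing the failure probabilities in $n$ (Borel--Cantelli) upgrades this to almost sure convergence, and Lemma~\ref{lemma:sequence_real_numbers} completes the Cesaro step to deliver (b).

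The main obstacle is the doubly-indexed nature of $\bar\mu_i = \bar\mu_i^{(n)}$: the weights $\lambda_j^{(n)}$ depend both on the data (through $\hat\sigma_{j-1}^2$) and on the target sample size $n$ (through the $1/\sqrt{n}$ factor). This rules out a direct SLLN on a single martingale and requires uniformity over $i \leq n$ via a maximal inequality on $\max_{i \leq n}\|M_i^{(n)}\|$; some extra care is also needed for the finitely many small-$j$ indices at which the $\min$ defining $\lambda_j^{\text{CI}}$ may attain $c_1$ rather than $\sqrt{\cdot}$, but their contribution to $\sum_j \lambda_j^2$ is $O(1)$ and hence negligible. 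Analogous arguments for the scalar case appear in \citet[Appendix~E.3]{waudby2024estimating} and for the Euclidean multivariate case in \citet[Appendix~F.1]{chugg2023time}; the approach here mirrors theirs, substituting Pinelis' dimension-free Bernstein inequality in place of their scalar or covering-based bounds.
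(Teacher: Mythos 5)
Your decomposition and the reduction to parts (a) and (b) mirror the paper's. You use the algebraic bound $\big|\,\|X_i-\bar\mu_{i-1}\|^2 - \|X_i-\mu\|^2\,\big| \le \|\bar\mu_{i-1}-\mu\|\,\big(2\|X_i-\mu\|+\|\bar\mu_{i-1}-\mu\|\big)$, whereas the paper applies the triangle and reverse triangle inequalities separately and expands the squares, but both routes produce the same $A_n,B_n,C_n$-type terms; your part (a) is exactly the paper's scalar SLLN step. Where you differ is part (b). The paper dispatches $\frac1n\sum_i\|\bar\mu_{i-1}-\mu\|\to 0$ by asserting that $\|\bar\mu_{i-1}-\mu\|\to 0$ almost surely via the Banach-space SLLN of \citet{bosq2000linear} and then invoking the Cesàro step Lemma~\ref{lemma:sequence_real_numbers}; you instead propose a Pinelis-type maximal inequality, Borel--Cantelli over $n$, and explicit control of $\sum_j\lambda_j^2$.

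Your heavier route is driven by a genuine concern that the paper's one-line invocation passes over: $\bar\mu_{i-1}$ is the $\lambda$-weighted average $\sum_{j<i}\lambda_j X_j/\sum_{j<i}\lambda_j$ with data-dependent weights, not the empirical average that the iid SLLN addresses, and in the batch instantiation $\lambda_j=\lambda_j^{\text{CI}}$ depends on $n$, so $\bar\mu_{i-1}$ is a triangular-array quantity and ``$\bar\mu_{i-1}\to\mu$ almost surely'' is not a well-posed single-sequence statement. You are right to flag this. However, your write-up is only a plan: the maximal inequality is not stated, the failure probabilities are not summed, and the passage from a per-$n$ high-probability event to almost-sure convergence of $\hat\sigma_n^2$ is not carried out. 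One specific claim is also off: $\lambda_j^{\text{CI}}$ equals the cap $c_1$ precisely when $\hat\sigma_{j-1}^2 \le 2(4B)^2\log(2/\alpha)/(nc_1^2)$, and since $\hat\sigma_{j-1}^2\ge c_2B^2/j$ this forces $j\gtrsim n$, not small $j$; the cap bites, if at all, at indices where the running variance estimate dips, and bounding the number of such indices and their contribution to $\sum_j\lambda_j^2$ requires its own argument rather than being dismissed as a finite small-$j$ boundary effect. In short, you have located a real subtlety in the paper's part-(b) step, but your proposed remedy is incomplete and mischaracterizes where the clipping occurs.
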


\begin{proof}
By the triangle inequality, $\|X_i - \bar\mu_{i-1}\| \leq \| X_i - \mu \| + \| \mu - \bar\mu_{i-1}\|$, so
\begin{align*}
        \sigma_n^2 &= \frac{1}{n} \sum_{i=1}^n \| X_i - \bar\mu_{i-1} \|^2
        \\&\leq\frac{1}{n} \sum_{i=1}^n (\| X_i - \mu \| + \| \mu - \bar\mu_{i-1}\|)^2
        \\&= \frac{1}{n} \sum_{i=1}^n \| X_i - \mu \|^2 + \frac{1}{n} \sum_{i=1}^n \| \mu - \bar\mu_{i-1}\|^2 + \frac{2}{n}\sum_{i=1}^n \| X_i - \mu \| \| \mu - \bar\mu_{i-1}\|
        \\&= \underbrace{\frac{1}{n} \sum_{i=1}^n \| X_i - \mu \|^2}_{A_n} + \underbrace{\frac{1}{n} \sum_{i=1}^n \| \mu - \bar\mu_{i-1}\|^2}_{B_n} + \underbrace{\frac{4B}{n}\sum_{i=1}^n \| \mu - \bar\mu_{i-1}\|}_{C_n}.
\end{align*}

By the reverse triangle inequality, $\|X_i - \bar\mu_{i-1}\| \geq \lba \| X_i - \mu \| - \| \mu - \bar\mu_{i-1}\|\rba$, so
\begin{align*}
        \sigma_n^2 &= \frac{1}{n} \sum_{i=1}^n \| X_i - \bar\mu_{i-1} \|^2
        \\&\geq\frac{1}{n} \sum_{i=1}^n (\| X_i - \mu \| - \| \mu - \bar\mu_{i-1}\|)^2
        \\&= \frac{1}{n} \sum_{i=1}^n \| X_i - \mu \|^2 + \frac{1}{n} \sum_{i=1}^n \| \mu - \bar\mu_{i-1}\|^2 - \frac{2}{n}\sum_{i=1}^n \| X_i - \mu \| \| \mu - \bar\mu_{i-1}\|
        \\&\geq \underbrace{\frac{1}{n} \sum_{i=1}^n \| X_i - \mu \|^2}_{D_n} + \underbrace{\frac{1}{n} \sum_{i=1}^n \| \mu - \bar\mu_{i-1}\|^2}_{E_n} - \underbrace{\frac{4B}{n}\sum_{i=1}^n \| \mu - \bar\mu_{i-1}\|}_{F_n}.
\end{align*}

In view of the scalar-valued strong law of large numbers (SLLN), $A_n \stackrel{a.s.}{\to}\sigma^2$ and $D_n \stackrel{a.s.}{\to}\sigma^2$. Based on the SLLN  for separable Banach spaces (see e.g. \cite{bosq2000linear}, Theorem 2.4), $\|\mu-\bar\mu_{i-1}\| \stackrel{a.s.}{\to} 0$. In view of Lemma \ref{lemma:sequence_real_numbers}, $C_n \stackrel{a.s.}{\to}0$ and $F_n \stackrel{a.s.}{\to}0$. By the continuous mapping theorem, $\|\mu-\bar\mu_{i-1}\|^2 \stackrel{a.s.}{\to} 0$. Thus $B_n \stackrel{a.s.}{\to}0$ and $E_n \stackrel{a.s.}{\to}0$, again by Lemma \ref{lemma:sequence_real_numbers}. Consequently, it holds almost surely that
\begin{align*}
    \sigma^2 = \lim_{n \to \infty} (D_n + E_n + F_n) \leq \liminf_{n \to \infty} \sigma_n^2 \leq \limsup_{n \to \infty} \sigma_n^2 \leq \lim_{n \to \infty} (A_n + B_n + C_n) = \sigma^2.
\end{align*}
We conclude that $\lim_{n\to\infty}\sigma_n^2 \to \sigma^2$ almost surely.

\end{proof}

\begin{lemma} \label{lemma:sum_R_converges}
    $\sum_{i=1}^n\pe(\lambda_i(\kappa)) \| X_i - \bar \mu_{i-1}\|^2$ converges to $\frac{\kappa}{2} \log (\frac{2}{\alpha})$ almost surely.
\end{lemma}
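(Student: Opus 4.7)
The plan is to expand $\pe(\lambda) = -\log(1-\lambda) - \lambda = \lambda^2/2 + \sum_{k \geq 3}\lambda^k/k$ around $\lambda = 0$ and write
\begin{align*}
\sum_{i=1}^n \pe(\lambda_i(\kappa))\|X_i - \bar\mu_{i-1}\|^2 = \underbrace{\tfrac{1}{2}\sum_{i=1}^n \lambda_i(\kappa)^2 \|X_i - \bar\mu_{i-1}\|^2}_{(\mathrm{I})} + \underbrace{\sum_{i=1}^n r(\lambda_i(\kappa))\|X_i - \bar\mu_{i-1}\|^2}_{(\mathrm{II})},
\end{align*}
where $r(\lambda) := \pe(\lambda) - \lambda^2/2$ satisfies $r(\lambda) \leq C\lambda^3$ on any compact subinterval of $(-1, 1)$ for an absolute constant $C$. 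The goal is then to show $(\mathrm{I}) \to \tfrac{\kappa}{2}\log(2/\alpha)$ and $(\mathrm{II}) \to 0$ almost surely (assuming $\sigma^2 > 0$; the degenerate case yields identically zero summands and is treated separately).

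For $(\mathrm{I})$, substituting $\lambda_i(\kappa)^2 = \kappa\log(2/\alpha)/(n\hat\sigma_{i-1}^2)$ gives
\[(\mathrm{I}) = \frac{\kappa\log(2/\alpha)}{2}\cdot\frac{1}{n}\sum_{i=1}^n \frac{\|X_i - \bar\mu_{i-1}\|^2}{\hat\sigma_{i-1}^2},\]
so it suffices to show the Cesàro average tends to $1$ a.s. I would invoke Lemma \ref{lemma:sigma_sigmahat} twice. First, since $\|X_i - \bar\mu_{i-1}\|^2 \leq 4B^2$ and $1/\hat\sigma_{i-1}^2 \to 1/\sigma^2$ a.s., the summands $\|X_i - \bar\mu_{i-1}\|^2(1/\hat\sigma_{i-1}^2 - 1/\sigma^2)$ vanish a.s., and Lemma \ref{lemma:sequence_real_numbers} sends their Cesàro average to zero. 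Second, the convergence $\hat\sigma_n^2 \to \sigma^2$ combined with the defining expression for $\hat\sigma_n^2$ yields $\tfrac{1}{n}\sum_i \|X_i - \bar\mu_{i-1}\|^2 \to \sigma^2$. Combining the two gives $\tfrac{1}{n}\sum_i \|X_i - \bar\mu_{i-1}\|^2/\hat\sigma_{i-1}^2 \to 1$ and hence the desired limit for $(\mathrm{I})$.

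For $(\mathrm{II})$, fix a sample path on which $\hat\sigma_{i-1}^2 \to \sigma^2 > 0$ and let $N$ be the finite, realization-dependent index beyond which $\hat\sigma_{i-1}^2 \geq \sigma^2/2$. For $i \geq N$ and $n$ large, $\lambda_i(\kappa) \leq \sqrt{2\kappa\log(2/\alpha)/(n\sigma^2)} \leq 1/2$, so the cubic bound on $r$ applies and
\[\sum_{i=N}^n r(\lambda_i(\kappa))\|X_i-\bar\mu_{i-1}\|^2 \leq 4B^2 C \cdot n \cdot \left(\tfrac{2\kappa\log(2/\alpha)}{n\sigma^2}\right)^{3/2} = O(n^{-1/2}).\]
For the remaining finitely many $i < N$, the quantities $\hat\sigma_{i-1}^2$ do not depend on $n$, so each $\lambda_i(\kappa) = O(n^{-1/2})$ and the finite sum is $o(1)$. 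Combining, $(\mathrm{II}) \to 0$ a.s.

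The main subtlety is the self-referential nature of $\lambda_i(\kappa)$ through the empirical variance $\hat\sigma_{i-1}^2$, which precludes a direct appeal to the SLLN. The proved convergence $\hat\sigma_{i-1}^2 \to \sigma^2$ (Lemma \ref{lemma:sigma_sigmahat}) is the critical enabler: it simultaneously delivers the eventual uniform lower bound needed to control the cubic remainder and the Cesàro-type limit underpinning the leading term. After that, the argument reduces to bookkeeping that separates the (finitely many) early indices from the asymptotic tail.
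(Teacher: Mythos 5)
Your proof is correct and uses essentially the same ingredients as the paper's: the second-order behavior $\pe(\lambda) \sim \lambda^2/2$, the almost-sure convergence $\hat\sigma_{n}^2 \to \sigma^2$ (Lemma~\ref{lemma:sigma_sigmahat}), and the Ces\`aro Lemma~\ref{lemma:sequence_real_numbers}. The only difference is bookkeeping: you make an \emph{additive} split $\pe(\lambda) = \lambda^2/2 + r(\lambda)$ with an explicit cubic bound on $r$, handle the leading and remainder sums separately, and separate the finitely many early indices from the tail; the paper instead writes the ratio $\pe(\lambda_i)/(\lambda_i^2/2)$ and $\sigma^2/\hat\sigma_{i-1}^2$ as $1 + R_i$ and $1 + R_i'$ with $R_i, R_i' \to 0$ a.s., combines them multiplicatively, and applies the Ces\`aro lemma to a single sum with vanishing error coefficients. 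Both routes are equivalent; yours is marginally more explicit about the remainder's rate ($O(n^{-1/2})$), while the paper's is a bit more compact and sidesteps needing an eventual lower bound on $\hat\sigma_{i-1}^2$ by working with ratios that automatically tend to $1$.
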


\begin{proof}
    By two applications of L'Hôpital's rule
    \begin{align*}
        \lim_{\lambda \downarrow 0} \frac{\pel}{\lambda^2 / 2} = \lim_{\lambda \downarrow 0} \pe''(\lambda) = \lim_{\lambda \downarrow 0} \frac{1}{(1-\lambda)^2} = 1.
    \end{align*}
    Based on the previous limit and Lemma \ref{lemma:sigma_sigmahat},
    \begin{align*}
        \frac{\pe(\lambda_i(\kappa))}{\lambda^2_i(\kappa) / 2} = 1 + R_i, \quad \frac{\sigma^2}{\hat\sigma_{i-1}^2} = 1 + R'_i,
    \end{align*}
    where $\max(R_i, R'_i) \stackrel{a.s.}{\to} 0$ given that $\lambda_i(\kappa) \stackrel{a.s.}{\to} 0$. We observe that
    \begin{align*}
        \sum_{i=1}^n\pe(\lambda_i(\kappa)) \| X_i - \bar \mu_{i-1}\|^2 &= \sum_{i=1}^n \lp \lambda^2_i(\kappa) / 2\rp\lp 1 + R_i\rp \| X_i - \bar \mu_{i-1}\|^2
        \\&= \sum_{i=1}^n \frac{\kappa \log (\frac{2}{\alpha})}{2n\hat\sigma_{i-1}^2} \lp 1 + R_i\rp \| X_i - \bar \mu_{i-1}\|^2
        \\&= \sum_{i=1}^n \frac{\kappa \log (\frac{2}{\alpha})}{2n\sigma^2} \lp 1 + R_i\rp\lp 1 + R'_i\rp \| X_i - \bar \mu_{i-1}\|^2
        \\&= \sum_{i=1}^n \frac{\kappa \log (\frac{2}{\alpha})}{2n\sigma^2} \lp 1 + R''_i\rp \| X_i - \bar \mu_{i-1}\|^2,
    \end{align*}
    where $R''_i = R_i + R'_i$ and $R''_i \stackrel{a.s.}{\to} 0$. Thus
    \begin{align*}
        \sum_{i=1}^n\pe(\lambda_i(\kappa)) \| X_i - \bar \mu_{i-1}\|^2 &= \frac{\kappa \log (\frac{2}{\alpha})}{2\sigma^2}\lb\frac{1}{n}\sum_{i=1}^n  \lp 1 + R''_i\rp \| X_i - \bar \mu_{i-1}\|^2\rb
        \\&= \frac{\kappa \log (\frac{2}{\alpha})}{2\sigma^2}\lb\frac{1}{n}\sum_{i=1}^n  \| X_i - \bar \mu_{i-1}\|^2 + \frac{1}{n}\sum_{i=1}^n   R''_i \| X_i - \bar \mu_{i-1}\|^2\rb
        \\&\stackrel{a.s.}{\to} \frac{\kappa \log (\frac{2}{\alpha})}{2\sigma^2}\lb\sigma^2 + 0\rb,
    \end{align*}
    where the almost sure convergence follows from Lemma \ref{lemma:sigma_sigmahat} and 
    \begin{align*}
        0 \leq \frac{1}{n}\sum_{i=1}^n  R''_i \| X_i - \bar \mu_{i-1}\|^2 \leq  \frac{(2B)^2}{n}\sum_{i=1}^n  R''_i \stackrel{a.s.}{\to} 0
    \end{align*}
    in view of Lemma \ref{lemma:sequence_real_numbers}.
\end{proof}

\begin{lemma} \label{lemma:sum_lambdas_converges}
    $\frac{1}{\sqrt{n}}\sum_{i=1}^n \lambda_i(\kappa)$ converges to $\sqrt{\frac{\kappa \log (2/\alpha)}{\sigma^2}}$ almost surely. 
\end{lemma}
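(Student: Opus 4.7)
The plan is to reduce the claim to a straightforward Cesaro averaging argument combined with a continuous-mapping step. Substituting the definition
\begin{equation*}
\lambda_i(\kappa) = \sqrt{\frac{\kappa \log(2/\alpha)}{n \hat\sigma_{i-1}^2}}
\end{equation*}
into the sum and pulling the terms that do not depend on $i$ out of the summation, one immediately obtains
\begin{equation*}
\frac{1}{\sqrt n}\sum_{i=1}^n \lambda_i(\kappa) \;=\; \sqrt{\kappa \log(2/\alpha)}\cdot \frac{1}{n}\sum_{i=1}^n \frac{1}{\hat\sigma_{i-1}}.
\end{equation*}
Hence the problem is reduced to showing that the Cesaro mean $\frac{1}{n}\sum_{i=1}^n \hat\sigma_{i-1}^{-1}$ converges almost surely to $1/\sigma$.

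To establish this reduction, I would first invoke Lemma~\ref{lemma:sigma_sigmahat} (or the variant that incorporates the regularizer $c_2 B^2$, which differs from $\sigma_n^2$ only by terms of order $1/n$) to obtain $\hat\sigma_n^2 \stackrel{a.s.}{\to} \sigma^2$. Assuming $\sigma > 0$ (the statement is vacuous otherwise, since $X$ would be a.s.\ constant), the map $x \mapsto 1/\sqrt{x}$ is continuous on $(0, \infty)$, and the continuous mapping theorem delivers $\hat\sigma_{i-1}^{-1} \stackrel{a.s.}{\to} \sigma^{-1}$. Note that the definition $\hat\sigma_t^2 = (c_2 B^2 + \sum_{i=1}^t \|X_i - \bar\mu_{i-1}\|^2)/(t+1)$ ensures positivity of each term whenever $c_2 > 0$, so the reciprocals are well defined pointwise.

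Finally, I would apply Lemma~\ref{lemma:sequence_real_numbers} pointwise on the probability-one event on which $\hat\sigma_{i-1}^{-1} \to \sigma^{-1}$ to conclude that $\frac{1}{n}\sum_{i=1}^n \hat\sigma_{i-1}^{-1} \stackrel{a.s.}{\to} 1/\sigma$. Multiplying back by $\sqrt{\kappa \log(2/\alpha)}$ yields the claimed limit $\sqrt{\kappa \log(2/\alpha)/\sigma^2}$. There is no substantive obstacle in this proof: the ingredients (Lemma~\ref{lemma:sigma_sigmahat}, the continuous mapping theorem, and the elementary Cesaro fact in Lemma~\ref{lemma:sequence_real_numbers}) fit together directly, and only the mild nondegeneracy check on $\hat\sigma_{i-1}^2$ merits explicit attention.
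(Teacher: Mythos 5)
Your proof is correct and matches the paper's argument in all essentials: both factor out $\sqrt{\kappa\log(2/\alpha)}$, invoke Lemma~\ref{lemma:sigma_sigmahat} and the continuous mapping theorem to get $\hat\sigma_{i-1}^{-1}\stackrel{a.s.}{\to}\sigma^{-1}$, and then apply the Cesàro averaging fact of Lemma~\ref{lemma:sequence_real_numbers}. The paper just packages the convergence slightly differently by writing $\sqrt{\sigma^2/\hat\sigma_{i-1}^2}=1+R_i'$ with $R_i'\to 0$, which is the same reduction; your side remarks on $\sigma>0$ and on the $c_2 B^2$ regularizer are sensible clarifications but do not change the substance.
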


\begin{proof}
    Based on Lemma \ref{lemma:sigma_sigmahat} and the continuous mapping theorem,
     \begin{align*}
        \sqrt{\frac{\sigma^2}{\hat\sigma_{i-1}^2}} = 1 + R'_i,
    \end{align*}
    where $R'_i \stackrel{a.s.}{\to} 0$. Hence,
    \begin{align*}
        \frac{1}{\sqrt{n}}\sum_{i=1}^n \lambda_i(\kappa) &= \frac{1}{\sqrt{n}}\sum_{i=1}^n \sqrt{\frac{\kappa \log (\frac{2}{\alpha})}{n\hat\sigma_{i-1}^2}}
        \\&= \sqrt{\frac{\kappa \log (\frac{2}{\alpha})}{\sigma^2}}\lb \frac{1}{n}\sum_{i=1}^n  \lp 1 + R'_i \rp \rb
        \\&\stackrel{a.s.}{\to} \sqrt{\frac{\kappa \log (\frac{2}{\alpha})}{\sigma^2}},
    \end{align*}
    where the almost sure convergence follows from Lemma \ref{lemma:sequence_real_numbers}.
\end{proof}

We now turn to the proof of Proposition~\ref{proposition:radius_ball_iid}. Taking $\kappa = 2(4B)^2$ in Lemma \ref{lemma:sum_R_converges} and Lemma \ref{lemma:sum_lambdas_converges}, it follows that
    \begin{align*}
        \sqrt{n} \lp D\frac{\frac{1}{4B} \sum_{i=1}^n \psi_E(\lambda_i) \ldba X_i - \bar\mu_{i-1} \rdba^2 + 4B\log \lp \frac{2}{\alpha} \rp}{\sum_{i=1}^{n} \lambda_i} \rp 
    \end{align*}
    converges almost surely to 
    \begin{align*}
        D\frac{4B \log (\frac{2}{\alpha}) + 4B\log \lp \frac{2}{\alpha} \rp }{\sqrt{\frac{2(4B)^2 \log (2/\alpha)}{\sigma^2}}} = \sigma D \sqrt{2 \log \lp \frac{2}{\alpha}\rp}.
    \end{align*}

\section{Properties of Pinelis' Bernstein-type inequality}

\subsection{Derivation of Pinelis' Bernstein-type inequality} \label{appendix:pinelis_derivation}

Let $X_1, X_2, \ldots$ be such that $\E_{t-1}\lb X_t\rb = \mu$ and $\|X_t - \mu \| \leq B$. \cite{pinelis1994optimum}, Theorem 3.1, establishes that the probability $\Pb \lp  \sup_t \ldba \sum_{i = 1}^t (X_i- \mu) \rdba \geq r \rp$ is upper bounded by 
\begin{align*}
     &2 \exp \lp - \lambda r + D^2 \ldba \sum_{i=1}^t E_{i-1}\lb e^{\lambda \| X_i-\mu\|} - 1 - \lambda \ldba X_i-\mu \rdba\rb \rdba_\infty\rp,
\end{align*}
where $r \geq 0$ and $\lambda > 0$ are arbitrary. The upper bound
\begin{align*}
    E_{i-1}\lb \sum_{k \geq 2} \frac{\lp \lambda \| X-\mu\| \rp^k}{k!} \rb &\leq  \frac{E_{i-1}\lb \| X-\mu\|^2\rb}{B^2} \sum_{k \geq 2} \frac{\lp \lambda B\rp^k}{k!} 
    \\&=E_{i-1}\lb \| X-\mu\|^2\rb\frac{e^{\lambda B} - 1 - \lambda B}{B^2}
\end{align*}
yields that, for all $r \geq 0$, $\Pb \lp  \sup_t\ldba \sum_{i = 1}^t (X_i- \mu) \rdba \geq r \rp$ is upper bounded by
\begin{align} \label{eq:bernstein_type_ineq_yet_to_optimize}
     2 \exp \lp - \lambda r + D^2\ldba \sum_{i=1}^t E_{i-1}\lb \| X-\mu\|^2\rb \rdba_\infty\frac{e^{\lambda B} - 1 - \lambda B}{B^2}\rp,
\end{align}
where $\lambda > 0$ is arbitrary (this is, precisely, the Bennett-type inequality exhibited in \cite{pinelis1994optimum}, Theorem 3.4). 

For a fixed $t$, the  optimization of $\lambda$ for minimizing the right hand side of \eqref{eq:bernstein_type_ineq_yet_to_optimize} leads to the usual Bennett and Bernstein type confidence intervals in the scalar setting with
\begin{align*}
    V:=D^2\ldba \sum_{i=1}^t E_{i-1}\lb \| X-\mu\|^2\rb \rdba_\infty \text{ instead of } \sigma^2t.
\end{align*}

Taking the derivative with respect to $\lambda$ of the exponent of the right hand side of \eqref{eq:bernstein_type_ineq_yet_to_optimize} and setting it to zero, we obtain 
\begin{align*}
    \lambda^* = \frac{1}{B} \log \lp 1 + \frac{Br}{V} \rp.
\end{align*}
Pluging $\lambda^*$ on the right hand side of \eqref{eq:bernstein_type_ineq_yet_to_optimize} we obtain the upper bound 
\begin{align*}
    -\frac{V}{B^2} h\lp \frac{rB}{V} \rp,
\end{align*}
where $h(u) = (1+u)\log (1+u) - u$, $u \geq 0$. It can be shown that
\begin{align*}
    h(u) \geq \frac{u^2}{2(1 + \frac{u}{3})},
\end{align*}
and so 
\begin{align}  \label{eq:oracle_bernstein_prob}
    \Pb \lp  \ldba \sum_{i = 1}^t (X_i- \mu) \rdba \geq r \rp \leq 2 \exp \lp - \frac{r^2}{2(V + r\frac{B}{3})}\rp, \quad \forall r \geq 0.
\end{align}

\subsection{Limiting Radius of Pinelis' Bernstein-type confidence intervals} \label{appendix:radius_pinelis}
The arguments we present for deriving the limiting radius of the Bernstein-type confidence sets from \cite{pinelis1994optimum} are analogous to the well studied scalar problem; we derive them in here for completeness of the work. Let $X_1, \ldots, X_n$ be iid with $\E\lb X\rb = \mu$,  $\sigma^2 = \E\lb \| X - \mu \|^2\rb$, and $\|X - \mu \| \leq B$. Taking $V = nD^2 \sigma^2$ in \eqref{eq:oracle_bernstein_prob} and solving for 
\begin{align*}
    2 \exp \lp - \frac{r^2}{2(V + r\frac{B}{3})}\rp = \alpha
\end{align*}
leads to the nonnegative solution
\begin{align*}
    r = \frac{V \log (2 / \alpha)}{3} + \sqrt{\frac{B^2 \log^2(2 / \alpha)}{9} + 2V\log(2/\alpha)} \leq \sqrt{2V\log(2/\alpha)} + \frac{2B\log(2/\alpha)}{3}.
\end{align*}
Consequently,
\begin{align*} 
    \Pb \lp  \ldba \sum_{i = 1}^n (X_i- \mu) \rdba \geq \sqrt{2V\log(2/\alpha)} + \frac{2B\log(2/\alpha)}{3} \rp \leq \alpha,
\end{align*}
which implies
\begin{align*} 
    \Pb \lp  \sqrt{n}\ldba \frac{1}{n} \sum_{i = 1}^n (X_i- \mu) \rdba \geq \sigma D \sqrt{2\log(2/\alpha)} + \frac{2B\log(2/\alpha)}{3\sqrt{n}} \rp \leq \alpha.
\end{align*}
Note that the limiting width scales as $\sigma D \sqrt{2\log(2/\alpha)}$, precisely the same rate as established in Proposition~\ref{proposition:radius_ball_iid}.

\subsection{On the optimality of Pinelis'  Bernstein-type inequality} \label{appendix:optimality_pinelis}

We compare here the oracle Bernstein type inequalities from \cite{pinelis1994optimum} to those from \cite{kohler2017sub} and \cite{gross2011recovering}.
Let $n$ be fixed and $X_1, \ldots, X_n$ be iid with $\E\lb X\rb = \mu$,  $\sigma^2 = \E\lb \| X - \mu \|^2\rb$, and $\|X\| \leq B$. Following the limitations of \cite{gross2011recovering, kohler2017sub}, we assume that the data belongs to a separable Hilbert space, and so the bounds from \cite{pinelis1994optimum} are taken with smoothness parameter $D = 1$.

As derived on Appendix \ref{appendix:pinelis_derivation}, the results from \cite{pinelis1994optimum} lead to   
\begin{align*} 
    \Pb \lp  \ldba \sum_{i = 1}^n (X_i- \mu) \rdba \geq r \rp \leq 2 \exp \lp - \frac{r^2}{2(n \sigma^2 + r\frac{B}{3})}\rp, \quad \forall r \geq 0.
\end{align*}
Thus
\begin{align} \label{eq:pinelis_bernstein_oracle_prob}
    \Pb \lp  \ldba \frac{1}{n}\sum_{i = 1}^n (X_i- \mu) \rdba \geq r \rp \leq 2 \exp \lp - \frac{n r^2}{2( \sigma^2 + r\frac{B}{3n})}\rp, \quad \forall r \geq 0.
\end{align}

In contrast, the following bound may be found in \cite{kohler2017sub}, Lemma 18, which is an extension of \cite{gross2011recovering}, Theorem 12, to the average of independent random vectors:
\begin{align} \label{eq:kohler_bernstein_oracle_prob}
     \Pb \lp  \ldba \frac{1}{n}\sum_{i = 1}^n (X_i- \mu) \rdba \geq r \rp \leq  \exp \lp - \frac{n r^2}{8\sigma^2} + \frac{1}{4}\rp, \quad \forall r \in \lp 0, \frac{\sigma^2}{B}  \rp.
\end{align}

Note that $r$ is freely chosen among positive values in \eqref{eq:pinelis_bernstein_oracle_prob}, while it is restricted to values smaller than $\sigma^2 / B$ in \eqref{eq:kohler_bernstein_oracle_prob}. Most importantly, for those $r < \sigma^2 / B$, 
\begin{align*}
    2 \exp \lp - \frac{n r^2}{2( \sigma^2 + r\frac{B}{3n})}\rp \stackrel{(i)}{<} 2 \exp \lp - \frac{n r^2}{2\sigma^2(1 +\frac{1}{3})} \rp = 2 \exp \lp - \frac{3n r^2}{8\sigma^2} \rp,
\end{align*}
where (i) is substantially loose for big $n$. Thus, as soon as 
\begin{align} \label{eq:easy_condition}
    \exp\lp - \frac{n r^2}{8\sigma^2} \rp \leq \sqrt{\frac{1}{2}\exp \lp \frac{1}{4}\rp} \approx 0.80,
\end{align}
it holds that 
\begin{align*}
    2 \exp \lp - \frac{n r^2}{2( \sigma^2 + r\frac{B}{3n})}\rp \stackrel{}{<} \exp \lp - \frac{n r^2}{8\sigma^2} + \frac{1}{4}\rp, 
\end{align*}
that is, the bound from \cite{pinelis1994optimum} is tighter. If \eqref{eq:easy_condition} does not hold, then the bound from \cite{kohler2017sub} is not informative to begin with since the above right hand side is larger than 1. 

Furthermore, we noted that the inequality (i) was not tight for big $n$. The $(1-\alpha)$ confidence interval derived from \eqref{eq:kohler_bernstein_oracle_prob} is of the form
\begin{align*}
     \sqrt{n}\ldba \frac{1}{n}\sum_{i = 1}^n (X_i- \mu) \rdba \leq \sigma \sqrt{8\lp \log\lp \frac{1}{\alpha} \rp + \frac{1}{4} \rp}.
\end{align*}
The width of such a confidence interval is clearly greater than  $\sigma \sqrt{2\log(2/\alpha)}$, the limiting width of \cite{pinelis1994optimum} confidence interval derived in Appendix \ref{appendix:radius_pinelis}. 
\section{Deriving confidence sequences that achieve LIL rates} \label{appendix:lil}

We devote this section to presenting how Theorem \ref{theorem:main_theorem} yields confidence sequences whose order matches that implied by the law of the iterated logarithm (LIL), both asymptotically and in the finite sample regime. 

\textbf{Asymptotic upper LIL.}
Let us start by defining the functions
  \begin{align*}
      \psi_{E, c}(\lambda) := \frac{-\log(1 - c\lambda) - c\lambda}{c^2}, \quad \psi_{G,c}(\lambda) := \frac{\lambda^2}{2 (1 - \lambda)}.
  \end{align*}
  Based on $e^x \leq 2 \cosh x$, Theorem~\ref{theorem:main_theorem} implies that 
   \begin{align} \nonumber
        \tilde S_t =  \exp \lp \lambda \ldba \sum_{i = 1}^t  \frac{X_i- \mu}{D} \rdba- \sum_{i = 1}^t \psi_{E, 4B}(\lambda) \| X_i - \bar\mu_{i-1} \|^2  \rp, \quad \lambda \in \left(0, \frac{0.8}{4B}\right],
    \end{align}
    is dominated by the nonnegative supermartingale $2S_t$. This means that  $\ldba M_t \rdba$ is $2$-sub-$\psi_{E, 4B}$ with variance process $V_t$ (see \cite{howard2021time}, Definition 1).

\begin{proof} [Proof of Corollary~\ref{corollary:lil}]
Given that $\ldba M_t \rdba$ is $2$-sub-$\psi_{E, 4B}$ with variance process $V_t$, applying~\cite{howard2021time}, Corollary 1, concludes the proof after noting that $\psi_{E, 4B} \approx \lambda^2 / 2$ as $\lambda \downarrow 0$.
\end{proof}


\textbf{Finite LIL bound.} In the finite sample regime, it is still possible to construct confidence sequences that scale as $O\lp \sqrt{V_t \log\log V_t}\rp$. This can be done by repeatedly applying Theorem~\ref{theorem:main_theorem} over geometrically-spaced epochs in time (this technique is often referred to as stitching, peeling, or chaining). More specifically, it is known that $\psi_{E, c}(\lambda) \leq \psi_{G,c}(\lambda)$ on $\lambda \in [0, 1/c)$, so $\ldba M_t \rdba$ is also $2$-sub-$\psi_{G, 4B}$ with variance process $V_t$. \cite{howard2021time}, Theorem 1, may thus be invoked to obtain a finite LIL bound, with $l_0 = \E[2S_0]=2$. The  hyperparameters $\eta$, $m$, and $h(k)$ in \cite{howard2021time}, Theorem 1, should be chosen carefully (depending on the values  $\alpha$ and $B$) in order  to provide sensible bounds.

\begin{proof} [Proof of Corollary~\ref{corollary:finite_lil}]
    Given that $\ldba M_t \rdba$ is $2$-sub-$\psi_{E, 4B}$ with variance process $V_t$, applying~\cite{howard2021time}, Theorem 1, with $l_0 = 2$
    and hyperparameters $\eta = 2$, $m = 1$, and $h(k) \propto k^{1.4}$, yields \eqref{eq:finite_lil_bound}.
\end{proof}

 Note that~\eqref{eq:finite_lil_bound} is analogous to \cite{howard2021time}, Equation (24), inflated by an extra factor $D$ which dictates the (lack of) smoothness of the Banach space.

\end{appendix}

\begin{acks}[Acknowledgments]
 The second author is also a member of the Machine Learning Department, Carnegie Mellon University. 
The authors would like to thank Ben Chugg and Martin Larsson for insightful conversations. Correspondence concerning this article should be addressed to the first author.
\end{acks}
\begin{funding}
The first author acknowledges that the project that gave rise to these results received the support of a fellowship from `la Caixa' Foundation (ID 100010434). The fellowship code is LCF/BQ/EU22/11930075.

The second author was supported by NSF Grant DMS-2310718.
\end{funding}


\begin{supplement}
\stitle{Code}
\sdescription{Reproducible code for obtaining Figure~\ref{fig:ci}.}
\end{supplement}


\bibliographystyle{imsart-number} 
\bibliography{files/bibliography}       

@inproceedings{pinelis1992approach,
  title={An approach to inequalities for the distributions of infinite-dimensional martingales},
  author={Pinelis, Iosif},
  booktitle={Probability in Banach Spaces, 8: Proceedings of the Eighth International Conference},
  pages={128--134},
  year={1992},
  organization={Springer}
}

@article{pinelis1994optimum,
  title={Optimum bounds for the distributions of martingales in {B}anach spaces},
  author={Pinelis, Iosif},
  journal={The Annals of Probability},
  pages={1679--1706},
  year={1994},
  publisher={JSTOR}
}

@article{waudby2024estimating,
  title={Estimating means of bounded random variables by betting},
  author={Waudby-Smith, Ian and Ramdas, Aaditya},
  journal={Journal of the Royal Statistical Society Series B: Statistical Methodology},
  volume={86},
  number={1},
  pages={1--27},
  year={2024},
  publisher={Oxford University Press US}
}

@article{chugg2023time,
  title={Time-uniform confidence spheres for means of random vectors},
  author={Chugg, Ben and Wang, Hongjian and Ramdas, Aaditya},
  journal={arXiv preprint arXiv:2311.08168},
  year={2023}
}

@article{pisier1975martingales,
  title={Martingales with values in uniformly convex spaces},
  author={Pisier, Gilles},
  journal={Israel Journal of Mathematics},
  volume={20},
  pages={326--350},
  year={1975},
  publisher={Springer}
}

@book{bosq2000linear,
  title={Linear processes in function spaces: theory and applications},
  author={Bosq, Denis},
  volume={149},
  year={2000},
  publisher={Springer Science \& Business Media}
}

@article{waudby2020confidence,
  title={Confidence sequences for sampling without replacement},
  author={Waudby-Smith, Ian and Ramdas, Aaditya},
  journal={Advances in Neural Information Processing Systems},
  volume={33},
  pages={20204--20214},
  year={2020}
}

@article{ryu2024gambling,
  title={Gambling-Based Confidence Sequences for Bounded Random Vectors},
  author={Ryu, J Jon and Wornell, Gregory W},
  journal={arXiv preprint arXiv:2402.03683},
  year={2024}
}

@inproceedings{maurer2009empirical,
  title = {Empirical {B}ernstein Bounds and Sample-Variance Penalization},
  author = {Andreas Maurer and Massimiliano Pontil},
  booktitle = {Conference on Learning Theory},
    year = {2009},
 organization={PMLR}
}

@inproceedings{kohler2017sub,
  title={Sub-sampled cubic regularization for non-convex optimization},
  author={Kohler, Jonas Moritz and Lucchi, Aurelien},
  booktitle={International Conference on Machine Learning},
  pages={1895--1904},
  year={2017},
  organization={PMLR}
}

@article{gross2011recovering,
  title={Recovering low-rank matrices from few coefficients in any basis},
  author={Gross, David},
  journal={IEEE Transactions on Information Theory},
  volume={57},
  number={3},
  pages={1548--1566},
  year={2011},
  publisher={IEEE}
}

@book{ledoux2013probability,
  title={Probability in Banach Spaces: Isoperimetry and Processes},
  author={Ledoux, Michel and Talagrand, Michel},
  year={2013},
  publisher={Springer Science \& Business Media}
}

@article{howard2020time,
  title={Time-uniform {C}hernoff bounds via nonnegative supermartingales},
  author={Howard, Steven R and Ramdas, Aaditya and McAuliffe, Jon and Sekhon, Jasjeet},
  journal={Probability Surveys},
  volume={17},
  pages={257--317},
  year={2020},
  publisher={Institute of Mathematical Statistics and Bernoulli Society}
}

@article{howard2021time,
  title={Time-uniform, nonparametric, nonasymptotic confidence sequences},
  author={Howard, Steven R and Ramdas, Aaditya and McAuliffe, Jon and Sekhon, Jasjeet},
  journal={The Annals of Statistics},
  volume={49},
  number={2},
  pages={1055--1080},
  year={2021},
  publisher={Institute of Mathematical Statistics}
}

@article{van2020maximal,
  title={Maximal inequalities for stochastic convolutions in 2-smooth {B}anach spaces and applications to stochastic evolution equations},
  author={van Neerven, Jan and Veraar, Mark},
  journal={Philosophical Transactions of the Royal Society A},
  volume={378},
  number={2185},
  pages={20190622},
  year={2020},
  publisher={The Royal Society Publishing}
}

@article{hoeffding1963probability,
  title={Probability Inequalities for Sums of Bounded Random Variables},
  author={Hoeffding, Wassily},
  journal={Journal of the American Statistical Association},
  volume={58},
  number={301},
  pages={13--30},
  year={1963},
  publisher={Taylor \& Francis}
}

@article{bennett1962probability,
  title={Probability inequalities for the sum of independent random variables},
  author={Bennett, George},
  journal={Journal of the American Statistical Association},
  volume={57},
  number={297},
  pages={33--45},
  year={1962},
  publisher={Taylor \& Francis}
}

@book{bernstein_theory_1927,
	title = {Theory of probability},
	publisher = {Gastehizdat Publishing House},
	author = {Bernstein, Sergei},
	year = {1927}
}

@inproceedings{audibert2007tuning,
  title={Tuning bandit algorithms in stochastic environments},
  author={Audibert, Jean-Yves and Munos, R{\'e}mi and Szepesv{\'a}ri, Csaba},
  booktitle={International Conference on Algorithmic Learning Theory},
  pages={150--165},
  year={2007},
  organization={Springer}
}

@article{kallenberg1991some,
  title={Some dimension-free features of vector-valued martingales},
  author={Kallenberg, Olav and Sztencel, Rafal},
  journal={Probability Theory and Related Fields},
  volume={88},
  number={2},
  pages={215--247},
  year={1991},
  publisher={Springer}
}

@article{orabona2023tight,
  title={Tight concentrations and confidence sequences from the regret of universal portfolio},
  author={Orabona, Francesco and Jun, Kwang-Sung},
  journal={IEEE Transactions on Information Theory},
  year={2023},
  publisher={IEEE}
}

@article{ball2002sharp,
  title={Sharp uniform convexity and smoothness inequalities for trace norms},
  author={Ball, Keith and Carlen, Eric A and Lieb, Elliott H},
  journal={Inequalities: Selecta of Elliott H. Lieb},
  pages={171--190},
  year={2002},
  publisher={Springer}
}

@book{steinwart2008support,
  title={Support vector machines},
  author={Steinwart, Ingo and Christmann, Andreas},
  year={2008},
  publisher={Springer Science \& Business Media}
}

@article{fan_exponential_2015,
	title = {Exponential inequalities for martingales with applications},
	volume = {20},
	language = {EN},
	number = {1},
	urldate = {2017-08-11},
	journal = {Electronic Journal of Probability},
	author = {Fan, Xiequan and Grama, Ion and Liu, Quansheng},
	year = {2015},
	mrnumber = {MR3311214},
	zmnumber = {1320.60058},
	pages = {1--22}
}

@book{ville1939etude,
  title={Etude critique de la notion de collectif},
  author={Ville, Jean},
  year={1939},
  publisher={Gauthier-Villars Paris}
}

@book{vershynin2018high,
  title={High-dimensional probability: An introduction with applications in data science},
  author={Vershynin, Roman},
  volume={47},
  year={2018},
  publisher={Cambridge University Press}
}

@book{bauldry2009introduction,
  title={Introduction to real analysis: an educational approach},
  author={Bauldry, William C},
  year={2009},
  publisher={John Wiley \& Sons}
}

@article{lusin1912,
  title={Sur les propriétés des fonctions mesurables},
  author={Lusin, Nikolai},
  journal={Comptes rendus de l'Académie des Sciences de Paris},
  volume={154},
  pages={1688–1690},
  year={1912},
  publisher={IEEE}
}

@inproceedings{balsubramani2016sequential,
  title={Sequential nonparametric testing with the law of the iterated logarithm},
  author={Balsubramani, Akshay and Ramdas, Aaditya},
  booktitle={Proceedings of the Thirty-Second Conference on Uncertainty in Artificial Intelligence},
  pages={42--51},
  year={2016}
}

@article{mhammedi2019pac,
  title={PAC-Bayes un-expected Bernstein inequality},
  author={Mhammedi, Zakaria and Gr{\"u}nwald, Peter and Guedj, Benjamin},
  journal={Advances in Neural Information Processing Systems},
  volume={32},
  year={2019}
}

@inproceedings{rakhlin2017equivalence,
  title={On equivalence of martingale tail bounds and deterministic regret inequalities},
  author={Rakhlin, Alexander and Sridharan, Karthik},
  booktitle={Conference on Learning Theory},
  pages={1704--1722},
  year={2017},
  organization={PMLR}
}

@inproceedings{jun2019parameter,
  title={Parameter-free online convex optimization with sub-exponential noise},
  author={Jun, Kwang-Sung and Orabona, Francesco},
  booktitle={Conference on Learning Theory},
  pages={1802--1823},
  year={2019},
  organization={PMLR}
}

@inproceedings{cutkosky2018black,
  title={Black-box reductions for parameter-free online learning in banach spaces},
  author={Cutkosky, Ashok and Orabona, Francesco},
  booktitle={Conference on Learning Theory},
  pages={1493--1529},
  year={2018},
  organization={PMLR}
}

@article{shekhar2023near,
  title={On the near-optimality of betting confidence sets for bounded means},
  author={Shekhar, Shubhanshu and Ramdas, Aaditya},
  journal={arXiv preprint arXiv:2310.01547},
  year={2023}
}

@article{whitehouse2023time,
  title={Time-uniform self-normalized concentration for vector-valued processes},
  author={Whitehouse, Justin and Wu, Zhiwei Steven and Ramdas, Aaditya},
  journal={arXiv preprint arXiv:2310.09100},
  year={2023}
}

@article{robbins1968iterated,
  title={Iterated logarithm inequalities and related statistical procedures},
  author={Robbins, Herbert and Siegmund, David},
  journal={Mathematics of the Decision Sciences},
  volume={2},
  pages={267--279},
  year={1968},
  publisher={American Mathematical Soc.}
}

@article{darling1967iterated,
  title={Iterated logarithm inequalities},
  author={Darling, Donald A and Robbins, Herbert},
  journal={Proceedings of the National Academy of Sciences},
  volume={57},
  number={5},
  pages={1188--1192},
  year={1967},
  publisher={National Acad Sciences}
}

@inproceedings{cutkosky2019combining,
  title={Combining online learning guarantees},
  author={Cutkosky, Ashok},
  booktitle={Conference on Learning Theory},
  pages={895--913},
  year={2019},
  organization={PMLR}
}

@book{ramsay2005functional,
  title={Functional data analysis},
  author={Ramsay, James O and Silverman, Bernard W},
  year={2005},
  publisher={Springer}
}

@book{horvath2012inference,
  title={Inference for functional data with applications},
  author={Horv{\'a}th, Lajos and Kokoszka, Piotr},
  volume={200},
  year={2012},
  publisher={Springer Science \& Business Media}
}

@article{wang2016functional,
  title={Functional data analysis},
  author={Wang, Jane-Ling and Chiou, Jeng-Min and M{\"u}ller, Hans-Georg},
  journal={Annual Review of Statistics and its application},
  volume={3},
  number={1},
  pages={257--295},
  year={2016},
  publisher={Annual Reviews}
}

@article{degras2017simultaneous,
  title={Simultaneous confidence bands for the mean of functional data},
  author={Degras, David},
  journal={Wiley Interdisciplinary Reviews: Computational Statistics},
  volume={9},
  number={3},
  pages={e1397},
  year={2017},
  publisher={Wiley Online Library}
}

@article{cai2019simultaneous,
  title={Simultaneous confidence bands for mean and variance functions based on deterministic design},
  author={Cai, Li and Liu, Rong and Wang, Suojin and Yang, Lijian},
  journal={Statistica Sinica},
  volume={29},
  number={1},
  pages={505--525},
  year={2019},
  publisher={JSTOR}
}

@article{bull2013adaptive,
  title={Adaptive confidence sets in},
  author={Bull, Adam D and Nickl, Richard},
  journal={Probability Theory and Related Fields},
  volume={156},
  number={3},
  pages={889--919},
  year={2013},
  publisher={Springer}
}

@article{cai2006adaptive,
  title={Adaptive Confidence Balls},
  author={Cai, T Tony and Low, Mark G},
  journal={The Annals of Statistics},
  pages={202--228},
  year={2006},
  publisher={JSTOR}
}

@article{szabo2015honest,
  title={Honest Bayesian confidence sets for the L2-norm},
  author={Szabo, Botond and van der Vaart, Aad and van Zanten, Harry},
  journal={Journal of Statistical Planning and Inference},
  volume={166},
  pages={36--51},
  year={2015},
  publisher={Elsevier}
}

@article{juditsky2003nonparametric,
  title={Nonparametric confidence set estimation},
  author={Juditsky, Anatoli and Lambert-Lacroix, Sophie},
  journal={Mathematical Methods of Statistics},
  volume={12},
  number={4},
  pages={410--428},
  year={2003},
  publisher={New York: Allerton Press, c1992-}
}

@inproceedings{lopez2015towards,
  title={Towards a learning theory of cause-effect inference},
  author={Lopez-Paz, David and Muandet, Krikamol and Sch{\"o}lkopf, Bernhard and Tolstikhin, Iliya},
  booktitle={International Conference on Machine Learning},
  pages={1452--1461},
  year={2015},
  organization={PMLR}
}

@inproceedings{chatalic2022nystrom,
  title={Nystr{\"o}m kernel mean embeddings},
  author={Chatalic, Antoine and Schreuder, Nicolas and Rosasco, Lorenzo and Rudi, Alessandro},
  booktitle={International Conference on Machine Learning},
  pages={3006--3024},
  year={2022},
  organization={PMLR}
}

@article{tolstikhin2017minimax,
  title={Minimax estimation of kernel mean embeddings},
  author={Tolstikhin, Ilya and Sriperumbudur, Bharath K and Muandet, Krikamol},
  journal={Journal of Machine Learning Research},
  volume={18},
  number={86},
  pages={1--47},
  year={2017}
}

@article{wolfer2025variance,
  title={Variance-aware estimation of kernel mean embedding},
  author={Wolfer, Geoffrey and Alquier, Pierre},
  journal={Journal of Machine Learning Research},
  volume={26},
  number={57},
  pages={1--48},
  year={2025}
}

@article{liu2024robustness,
  title={On the robustness of kernel goodness-of-fit tests},
  author={Liu, Xing and Briol, Fran{\c{c}}ois-Xavier},
  journal={arXiv preprint arXiv:2408.05854},
  year={2024}
}

@article{sun2023kernel,
  title={Kernel robust hypothesis testing},
  author={Sun, Zhongchang and Zou, Shaofeng},
  journal={IEEE Transactions on Information Theory},
  volume={69},
  number={10},
  pages={6619--6638},
  year={2023},
  publisher={IEEE}
}

@inproceedings{schneider2016probability,
  title={Probability inequalities for kernel embeddings in sampling without replacement},
  author={Schneider, Markus},
  booktitle={Artificial Intelligence and Statistics},
  pages={66--74},
  year={2016},
  organization={PMLR}
}

@article{gretton2012kernel,
  title={A kernel two-sample test},
  author={Gretton, Arthur and Borgwardt, Karsten M and Rasch, Malte J and Sch{\"o}lkopf, Bernhard and Smola, Alexander},
  journal={The journal of machine learning research},
  volume={13},
  number={1},
  pages={723--773},
  year={2012},
  publisher={JMLR. org}
}

@inproceedings{gretton2005measuring,
  title={Measuring statistical dependence with Hilbert-Schmidt norms},
  author={Gretton, Arthur and Bousquet, Olivier and Smola, Alex and Sch{\"o}lkopf, Bernhard},
  booktitle={International conference on algorithmic learning theory},
  pages={63--77},
  year={2005},
  organization={Springer}
}

@article{szekely2007measuring,
  title={Measuring and Testing Dependence by Correlation of Distances},
  author={Sz{\'e}kely, G{\'a}bor J and Rizzo, Maria L and Bakirov, Nail K},
  journal={The Annals of Statistics},
  pages={2769--2794},
  year={2007},
  publisher={JSTOR}
}

@book{scholkopf2002learning,
  title={Learning with kernels: support vector machines, regularization, optimization, and beyond},
  author={Sch{\"o}lkopf, Bernhard and Smola, Alexander J},
  year={2002},
  publisher={MIT press}
}

@misc{supp_data,
  author = {Martinez-Taboada, Diego and Ramdas, Aaditya},
  title = {Supplement to ``Empirical Bernstein in smooth Banach spaces''},
  year = {2025},
}


\end{document}